\newtheorem{theorem}{Theorem}[section]
\newtheorem{proposition}[theorem]{Proposition}
\newtheorem{lemma}[theorem]{Lemma}
\newtheorem{corollary}[theorem]{Corollary}
\newtheorem{definition}{Definition}[section]
\newtheorem{example}{Example}[section]
\newtheorem{remark}{Remark}[section]
\newcommand{\im}{{\mathrm{Im}\hspace{0.1em}}}
\newcommand{\vol}{{\mathbf{vol}\hspace{0.1em}}}
    \newcommand*{\addFileDependency}[1]{
    \typeout{(#1)}
    \@addtofilelist{#1}
    \IfFileExists{#1}{}{\typeout{No file #1.}}
    }
\title{Persistent  hyperdigraph homology and persistent hyperdigraph Laplacians}
\author[1]{Dong Chen }
\author[2,3]{Jian Liu \footnote{The first two authors contribute equally.}}
\author[3]{Jie Wu \thanks{Corresponding author: wujie@bimsa.cn}}
\author[1,4,5]{Guo-Wei Wei \thanks{Corresponding author: weig@msu.edu}}
\affil[1]{Department of Mathematics, Michigan State University, MI, 48824, USA}
\affil[2]{Mathematical Science Research Center, Chongqing University of Technology, Chongqing 400054, China}
\affil[3]{Yanqi Lake Beijing Institute of Mathematical Sciences and Applications, Beijing 101408, China}
\affil[4]{Department of Electrical and Computer Engineering, Michigan State University, MI 48824, USA}
\affil[5]{Department of Biochemistry and Molecular Biology, Michigan State University, MI 48824, USA}
    \renewcommand*{\@fnsymbol}[1]{\ensuremath{\ifcase#1\or \dagger\or *\or *\or
   \mathsection\or \else\@ctrerr\fi}}
\date{}
\begin{document}
    \maketitle

    \paragraph{Abstract}

Hypergraphs are useful mathematical models for describing complex relationships among members of a structured graph, while hyperdigraphs serve as a generalization that can encode asymmetric relationships in the data. However, obtaining topological information directly from hyperdigraphs remains a challenge.
To address this issue, we introduce hyperdigraph homology in this work. We also propose topological hyperdigraph Laplacians, which can extract both harmonic spectra and non-harmonic spectra from directed and internally organized data. Moreover, we introduce persistent hyperdigraph homology and persistent hyperdigraph Laplacians through filtration, enabling the capture of topological persistence and homotopic shape evolution of directed and structured data across multiple scales. The proposed methods offer new multiscale  algebraic topology tools for topological data analysis.

    \paragraph{Keywords}
     Topological hyperdigraph, Topological hyperdigraph Laplacians, Homology, Filtration, Persistence.

    \newpage
    \tableofcontents
    \newpage

\section{Introduction}\label{section:introduction}

Topology and homology study the invariant properties of geometric objects under continuous deformations, providing a high level of abstraction for these objects \cite{kaczynski2004computational}. The well-known joke that topologists cannot distinguish between a coffee mug and a doughnut highlights the difficulty of topology in describing real-world objects. However, topological data analysis (TDA) has recently emerged as a way to overcome this difficulty. TDA facilitates topological deep learning, an emerging paradigm in data science that has been successful in various applications \cite{townsend2020representation,cang2017topologynet}. The main tool of TDA is persistent homology, which creates a family of multiscale topological spaces from a given dataset by filtration, allowing the extraction and analysis of the topological invariants of the data at various scales \cite{zomorodian2004computing,edelsbrunner2008persistent, bubenik2017persistence}. Through comparative analysis, persistent homology can be used to infer the shape of data \cite{carlsson2009topology}. However, a limitation of persistent homology is that at each dimension, the Betti number only counts the number of independent components and does not describe the properties of each component. For instance, a heterogeneous ring is counted the same as a homogeneous ring at dimension 1. To address this limitation, persistent cohomology was introduced, which embeds both geometric and non-geometric properties of the data into topological invariants \cite{cang2020persistent}.
Moreover, topological invariants are qualitative rather than quantitative. For example, at dimension 1, a ring with five members is counted the same as a ring with six members. These issues can limit the power of persistent homology in network analysis and other applications.

The graph Laplacian was originally introduced by Kirchhoff in 1847 to analyze electrical networks \cite{kirchhoff1847ueber}. For instance, the  second-smallest eigenvalue, also known as the Fiedler eigenvalue \cite{fiedler1973algebraic}, describes the algebraic connectivity of a graph. In 1944, Eckmann generalized the graph Laplacian to the simplicial complex setting, resulting in what is now known as combinatorial Laplacians. Combinatorial Laplacians are the discrete counterpart of Hodge Laplacians, which are also called Laplace-de Rham operators in differential geometry. The associated de Rham cohomology is often used to study the topology of manifolds and the behavior of vector fields on them. The connection between Hodge Laplacians on a differentiable manifold and combinatorial Laplacians on a point cloud is non-trivial and was studied in the context of discrete exterior calculus to understand discrete equivalents of differential forms \cite{hirani2003discrete,desbrun2005discrete,arnold2006finite}.
A notable property of both Hodge Laplacians and combinatorial Laplacians is that their harmonic spectra give rise to corresponding topological invariants or Betti numbers, which is why we refer to them as topological Laplacians \cite{wei2023topological}. There are several other topological Laplacians, such as sheaf Laplacians defined on cellular sheaves \cite{hansen2019toward} and path Laplacians  \cite{gomes2019path} derived from path complexes and path homology introduced by Yau and coworkers \cite{grigor2012homologies,grigor2020path}. Path homology and persistent path homology provide a topological analysis of digraphs and offer promising applications in molecular and material sciences \cite{chen2023path}.
Compared to corresponding homology theories, topological Laplacians are capable of describing the quantitative properties of the underlying system in their non-harmonic spectra \cite{horak2013spectra}.

In 2019, Wei and coworkers extended the power of topological Laplacians by introducing persistent topological Laplacians, which offer superior performance over persistent homology \cite{chen2019evolutionary,wang2019persistent}. Persistent Laplacian, also known as persistent spectral graphs or persistent combinatorial Laplacian, was introduced using  filtration by Wang et al.  \cite{wang2019persistent}. It  has been studied mathematically by Memoli et al. \cite{memoli2022persistent} and Liu et al.  \cite{liu2023algebraic}  and explored in biological contexts by Meng et al. \cite{meng2021persistent}, Chen et al. \cite{chen2022persistent}, Wee et al. \cite{wee2022persistent}, and Qiu and Wei \cite{qiu2023persistent}. An open-source online package has also been developed \cite{wang2021hermes}.
The evolutionary de Rham-Hodge Laplacians, or persistent Hodge Laplacians, were defined on a family of evolving manifolds \cite{chen2021evolutionary}. Discrete exterior calculus was utilized to implement various boundary conditions on manifolds with boundaries, which helped to accurately compute topological invariants from the harmonic spectra of persistent Hodge Laplacian operators. Persistent Hodge Laplacians provided a mathematical model for musical instruments that cannot be described by persistent homology \cite{wei2023topological}. Additionally, Wei and coworkers introduced persistent sheaf Laplacians on cellular sheaves \cite{wei2021persistent} and persistent path Laplacians on path complexes \cite{wang2023persistent}. The former allows for the embedding of heterogeneous information, such as atomic partial charges, into topological variants, while the latter facilitates the topological analysis of digraphs and directed networks. Ameneyro et al. proposed persistent Dirac operators for the efficient quantum computation of persistent Betti numbers \cite{ameneyro2022quantum}. This approach utilizes the relationship between Dirac operators and Laplacian operators.
 It has been demonstrated that persistent (topological) Laplacians can capture the homotopic shape evolution of data that is not present in the corresponding persistent homology analysis \cite{wei2023topological}.  An interesting property of non-harmonic eigenvalues is that they are discontinuous when there is a topological change.
 These new persistent topological  Laplacians have significantly expanded the application domain and power of TDA.

None of the methods mentioned earlier describe the internal organization of a network. Hypergraph is a popular mathematical model for data with complex relationships, and it has been widely applied in physics \cite{qu2013encoding}, computer science \cite{eiter2002hypergraph}, and engineering \cite{akhremtsev2017engineering}. However, traditional hypergraphs do not capture the topological information in the data. To address this issue, Wu and coworkers generalized traditional hypergraphs into topological hypergraphs using simplicial complexes \cite{ren2018hodge,bressan2019embedded}. The embedded homology of hypergraphs was introduced to study the topological invariants of hypergraphs \cite{bressan2019embedded}. Hodge-decomposition type of weighted hypergraphs was also studied \cite{ren2018hodge}, and discrete Morse functions for hypergraphs were considered \cite{ren2018discrete}. Additionally, Grbic et al. introduced the concept of super-hypergraphs and studied the embedded homology of super-hypergraphs \cite{grbic2022aspects}. More recently, Liu et al. introduced filtration to topological hypergraph Laplacians   \cite{liu2021persistent}.

Hypergraphs do not apply to   directed graphs (digraphs) and directed networks. To address this limitation, hyperdigraphs have been introduced as a generalization of digraphs \cite{gallo1993directed,berge1973graphs,dorfler1980category}. Essentially, a hyperdigraph is a hypergraph with an additional direction assigned to each hyperedge. As stated in \cite{ausiello2017directed}, ``hyperdigraphs have been applied in several domains where we are interested in representing implication systems (database theory, logics, artificial intelligence, etc.)". As a powerful mathematical model for data analysis, hyperdigraphs have been widely applied in various fields of computer science \cite{gallo1998directed,ramaswamy1997using}. Since hyperdigraphs can encode the orientation information between different objects, they have a natural advantage for material structure modeling, molecular group interaction analysis, biological system analysis, and more.
However, because of the complexity of hyperdigraphs, there is currently no established framework for topological hyperdigraphs or  hyperdigraph homology. Efforts have been made to analyze hyperdigraphs by identifying path complexes on them with the help of path homology theory \cite{muranov2021path}. Nevertheless, constructing topological hyperdigraphs requires the use of embedded homology techniques \cite{ren2018hodge,bressan2019embedded} or equivalent methods specifically tailored for hyperdigraph homology.

This work introduces several new TDA models: hyperdigraph homology (HDGH), topological hyperdigraph Laplacians (THDGLs), persistent hyperdigraph homology (PHDGH), and persistent hyperdigraph Laplacians (PHDGLs). To introduce a topological structure, we use sequences as the building blocks for hyperdigraph homology. The chain complex of the hyperdigraph homology is the maximal chain complex embedded into the space generated by these sequences. We develop an embedded homology technique to create HDGH.
To construct topological hyperdigraph Laplacians, we use boundary operators and adjoints. Additionally, we define PHDGH and PHDGLs by equipping a filtration process to analyze geometric objects at various filtration scales. We propose both a volume-based filtration and a distance-based filtration for PHDGH and PHDGLs.

The remainder of the paper is organized as follows. In the next section, we review hypergraph homology and topological hypergraph Laplacians to establish notations. In Section \ref{section:hyperdigraph}, we propose hyperdigraph homology and topological hyperdigraph Laplacians. In Section \ref{section:persistence_on_hyper}, we define persistence on hyperdigraph homology and topological hyperdigraph Laplacians. Finally,  we demonstrate the application of our proposed persistent hyperdigraph Laplacians to a protein-ligand complex in Section \ref{section:application}.

\section{A brief review of topological hypergraphs}\label{section:hypergraph}

 Hypergraph, as a kind of generalization of the simplicial complex, has attracted considerable attention in theory and application. The embedded homology provides a realization of topological hypergraphs \cite{bressan2019embedded}. In this section, we  review   hypergraph homology and  topological hypergraph Laplacians. From now on, $\mathbb{K}$ is always assumed to be a ground field and $|X|$ denotes the number of elements in a finite set $X$.

  Let $V$ be a nonempty finite ordered set. Let $\mathbf{P}(V)$ denote the power set of $V$ excluding the empty set, that is, the set of nonempty subsets of $V$. So we have that $\mathbf{P}(V)=\coprod\limits_{n=1}^{|V|}\mathbf{P}_{n}(V)$, where $\mathbf{P}_{n}(V)$ is the set of subsets with $n$ elements in $V$.
  A \emph{hypergraph} is a pair $\mathcal{H}=(V,E)$, where $E$ is a subset of $\mathbf{P}(V)$. The set $E$ is called the hyperedge set and an element $e\in E\cap \mathbf{P}_{n+1}(V)$ is called an $n$-hyperedge.
  In particular, the hypergraph $\mathcal{H}(V)=(V,\mathbf{P}(V))$ is called a \emph{complete hypergraph}. A hypergraph $\mathcal{H}=(V,E)$ can be regarded as a sub hypergraph of its associated complete hypergraph $\mathcal{H}(V)$. It is worth noting that a complete hypergraph is exactly an abstract simplicial complex.

  Now, let $\mathcal{H}=(V,E)$ be a hypergraph. Let $C_{p}(\mathcal{H};\mathbb{K})$ be the $\mathbb{K}$-linear space generated by the $p$-hyperedges of $\mathcal{H}$. Then $C_{\ast}(\mathcal{H};\mathbb{K})=(C_{p}(\mathcal{H};\mathbb{K}))_{p\geq 0}$ is a graded $\mathbb{K}$-linear space. In particular, let $C_{p}(V;\mathbb{K})$ be the $\mathbb{K}$-linear space generated by the $p$-hyperedges of $\mathcal{H}(V)$. Then $C_{\ast}(V;\mathbb{K})=(C_{p}(V;\mathbb{K}))_{p\geq 0}$ is a chain complex with the boundary operator $d_{p}:C_{p}(V;\mathbb{K})\to C_{p-1}(V;\mathbb{K})$ given by
  \begin{equation}
    d_{p}\{v_{0},v_{1},\dots,v_{p}\}=\sum\limits_{i=0}^{p}(-1)^{i}\{v_{0},\dots,\widehat{v_{i}},\dots,v_{p}\},\quad p\geq 1.
  \end{equation}
  Here, $\widehat{v_{i}}$ means the element $v_{i}$ is omitted. We make the convention $d_{0}\{v_{0}\}=0$ for all $v_{0}\in V$.
  The boundary operator $d_{p}$ on $C_{p}(V;\mathbb{K})$ restricts to a map $d_{p}:C_{p}(\mathcal{H};\mathbb{K})\to C_{p-1}(V;\mathbb{K})$. Then we have the infimum chain complex defined by
  \begin{equation}\label{equation:infimum_chaincomplex}
    \mathrm{Inf}_{p}(\mathcal{H};\mathbb{K})=\{x\in C_{p}(\mathcal{H};\mathbb{K})|d_{p}x\in C_{p-1}(\mathcal{H};\mathbb{K})\}
  \end{equation}
  for $p\geq 1$ and $\mathrm{Inf}_{0}(\mathcal{H};\mathbb{K})=C_{0}(\mathcal{H};\mathbb{K})$. It can be verified that $\mathrm{Inf}_{\ast}(\mathcal{H};\mathbb{K})=(\mathrm{Inf}_{p}(\mathcal{H};\mathbb{K}))_{p\geq 0}$ is indeed a chain complex.
  \begin{definition}
  The \emph{embedded homology} of a hypergraph $\mathcal{H}$ is defined by
  \begin{equation}
    H_{p}(\mathcal{H};\mathbb{K})=H_{p}(\mathrm{Inf}_{\ast}(\mathcal{H};\mathbb{K})),\quad p\geq 0.
  \end{equation}
  \end{definition}
  In this work, the topological hypergraph or hypergraph homology considered is the embedded homology of hypergraphs. The corresponding \emph{$p$-th Betti number} for the hypergraph $\mathcal{H}$ is defined by $\beta_{p}=\dim H_{p}(\mathcal{H};\mathbb{K})$ for $p\geq 0$.
  \begin{proposition}[\cite{bressan2019embedded}]
  The homology $H_{\ast}(-;\mathbb{K}):\mathbf{Hyper}\to \mathbf{Vec}_{\mathbb{K}}$ is a functor from the category of hypergraphs to the category of $\mathbb{K}$-linear spaces.
  \end{proposition}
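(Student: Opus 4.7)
The plan is to first pin down the morphisms in $\mathbf{Hyper}$ that make the statement meaningful. A natural choice, consistent with the setup in which $V$ is ordered and hyperedges are nonempty subsets, is this: a morphism $f:\mathcal{H}=(V,E)\to\mathcal{H}'=(V',E')$ is an order-preserving set map $f:V\to V'$ which is injective on every hyperedge $e\in E$ and satisfies $f(e)\in E'$. With this convention, identities and composition behave as in the category of ordered sets, and $\mathbf{Hyper}$ is a well-defined category.

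Next I would extend each such $f$ to the ambient chain level. Define $f_{\ast}:C_{p}(V;\mathbb{K})\to C_{p}(V';\mathbb{K})$ on basis elements by $f_{\ast}\{v_{0},\dots,v_{p}\}=\{f(v_{0}),\dots,f(v_{p})\}$ (a genuine $p$-element subset of $V'$ by the injectivity clause). The identity $d_{p}\circ f_{\ast}=f_{\ast}\circ d_{p}$ is the standard alternating-sum computation for ordered simplicial maps, since $f$ is order-preserving and each omission commutes with applying $f$. The hyperedge-preservation condition $f(e)\in E'$ then shows that $f_{\ast}$ restricts to a $\mathbb{K}$-linear map $C_{p}(\mathcal{H};\mathbb{K})\to C_{p}(\mathcal{H}';\mathbb{K})$.

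The main step is to lift $f_{\ast}$ to the infimum complexes. Given $x\in\mathrm{Inf}_{p}(\mathcal{H};\mathbb{K})$, one has both $x\in C_{p}(\mathcal{H};\mathbb{K})$ and $d_{p}x\in C_{p-1}(\mathcal{H};\mathbb{K})$. The previous step gives $f_{\ast}(x)\in C_{p}(\mathcal{H}';\mathbb{K})$ and $f_{\ast}(d_{p}x)\in C_{p-1}(\mathcal{H}';\mathbb{K})$. Combining these with $d_{p}f_{\ast}(x)=f_{\ast}d_{p}(x)$ yields $d_{p}f_{\ast}(x)\in C_{p-1}(\mathcal{H}';\mathbb{K})$, so $f_{\ast}(x)\in\mathrm{Inf}_{p}(\mathcal{H}';\mathbb{K})$. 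Hence $f_{\ast}$ is a chain map between the infimum complexes, and by the usual homology construction it induces a $\mathbb{K}$-linear map $H_{\ast}(f;\mathbb{K}):H_{\ast}(\mathcal{H};\mathbb{K})\to H_{\ast}(\mathcal{H}';\mathbb{K})$.

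Finally I would verify the two functor axioms. The identity morphism of $\mathcal{H}$ acts as the identity on each $C_{p}(V;\mathbb{K})$ and hence on each $\mathrm{Inf}_{p}(\mathcal{H};\mathbb{K})$, inducing the identity on homology. For composition $g\circ f$, the equality $(g\circ f)_{\ast}=g_{\ast}\circ f_{\ast}$ is immediate from the definition on basis chains, and passing to homology preserves this. The only genuinely subtle point in the whole argument is choosing the morphism convention so that $f_{\ast}$ of a basis element remains a basis element and lies in the target hyperedge set; this is precisely what the injectivity-on-hyperedges and $f(e)\in E'$ conditions are arranged to provide, and I expect this bookkeeping to be the sole nontrivial ingredient.
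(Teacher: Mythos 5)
Your argument is correct, and it is essentially the standard one: the paper itself states this proposition without proof (it is quoted from the cited reference on embedded homology), and the heart of the matter is exactly the step you isolate, namely that a chain map on the ambient complexes $C_{\ast}(V;\mathbb{K})\to C_{\ast}(V';\mathbb{K})$ automatically restricts to the infimum subcomplexes because $d_{p}f_{\ast}(x)=f_{\ast}(d_{p}x)\in C_{p-1}(\mathcal{H}';\mathbb{K})$ whenever $x\in\mathrm{Inf}_{p}(\mathcal{H};\mathbb{K})$; functoriality then descends from the chain level. The only point worth flagging is your morphism convention: requiring $f$ to be order-preserving and injective on every hyperedge is more restrictive than the usual category of hypergraphs (compare the paper's own definition of hyperdigraph morphisms, which asks only that $f(e)$ be a hyperedge of the target). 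In the general setting one handles a non-injective image by sending the corresponding basis element to $0$ (or to the lower-cardinality hyperedge $f(e)$, depending on the convention), and the commutation with $d$ then needs the classical cancellation argument for degenerate simplicial images rather than the termwise identification you use. Under your stated convention the proof is complete; under the broader convention the same restriction-to-$\mathrm{Inf}$ argument still works once that chain-level bookkeeping is supplied.
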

  Now, consider the case $\mathbb{K}=\mathbb{R}$. Endow $C_{\ast}(\mathcal{H};\mathbb{K})$ with the standard inner product given by
  \begin{equation*}
    \langle x,y\rangle=\left\{
                         \begin{array}{ll}
                           1, & \hbox{$x=y$;} \\
                           0, & \hbox{otherwise.}
                         \end{array}
                       \right.
  \end{equation*}
  Here, $x,y$ are hyperedges of $\mathcal{H}$. Thereby, $C_{\ast}(\mathcal{H};\mathbb{R})$ is an finite-dimensional inner product space. The infimum chain complex $\mathrm{Inf}_{\ast}(\mathcal{H};\mathbb{R})$ inherits the inner product structure of $C_{\ast}(\mathcal{H};\mathbb{R})$. We also denote the boundary operator on $\mathrm{Inf}_{p}(\mathcal{H};\mathbb{R})$ by $d_{p}:\mathrm{Inf}_{p}(\mathcal{H};\mathbb{R})\to \mathrm{Inf}_{p-1}(\mathcal{H};\mathbb{R})$. Then, we have the adjoint operator $(d_{p})^{\ast}:\mathrm{Inf}_{p-1}(\mathcal{H};\mathbb{R})\to \mathrm{Inf}_{p}(\mathcal{H};\mathbb{R})$ of $d_{p}$ with respect to the above inner product.
  \begin{definition}
  The \emph{topological hypergraph Laplacian} $\Delta^{\mathcal{H}}_{p}:\mathrm{Inf}_{p}(\mathcal{H};\mathbb{R})\to \mathrm{Inf}_{p}(\mathcal{H};\mathbb{R})$ of $\mathcal{H}$ is defined by
  \begin{equation*}
    \Delta^{\mathcal{H}}_{p}=(d_{p})^{\ast}\circ d_{p}+d_{p+1}\circ (d_{p+1})^{\ast},\quad p\geq 0.
  \end{equation*}
  \end{definition}
  In particular, if $\mathcal{H}$ is an abstract simplicial complex, then the embedded homology of $\mathcal{H}$ coincides with the simplicial homology of $\mathcal{H}$. Moreover, the topological hypergraph Laplacian of $\mathcal{H}$ reduces to the Laplacian of simplicial complexes.

  We choose a family of standard orthogonal bases of $\mathrm{Inf}_{p}(\mathcal{H};\mathbb{R})$ for $p\geq 0$. Let $B_{p}$ be the representation matrix of $d_{p}$ with respect to the chosen standard basis. Then the representation matrix \footnote{Here, the representation matrix is given by the left multiplication action on the basis. For the right multiplication action on the basis, the Laplacian matrix $L_{p}^{\mathcal{H}}=B_{p+1}B_{p+1}^{T}+B_{p}^{T}B_{p}$.} of $\Delta^{\mathcal{H}}_{p}$ is given by
  \begin{equation}\label{equation:laplacian}
    L_{p}^{\mathcal{H}}=B_{p+1}^{T}B_{p+1}+B_{p}B_{p}^{T}.
  \end{equation}
  Here, $B_{p}^{T}$ denote the transpose matrix of $B_{p}$.
  The following proposition shows that all the eigenvalues of $\Delta^{\mathcal{H}}_{p}$ is non-negative.
  \begin{proposition}
  The operator $\Delta^{\mathcal{H}}_{p}$ is self-adjoint and non-negative definite.
  \end{proposition}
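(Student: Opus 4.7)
The plan is to verify both properties directly from the definition, using only the two standard linear-algebra facts that on a finite-dimensional real inner product space, for any linear map $T$, the composites $T^{\ast}T$ and $TT^{\ast}$ are self-adjoint and positive semi-definite. Since $\Delta^{\mathcal{H}}_{p}$ is a sum of two operators of exactly these two forms (with $T=d_{p}$ and $T=(d_{p+1})^{\ast}$, respectively), both properties will follow from closure of self-adjointness and non-negativity under addition.

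First I would check self-adjointness. Recalling that $\bigl((d_{p})^{\ast}\bigr)^{\ast}=d_{p}$ on the inherited inner product of $\mathrm{Inf}_{\ast}(\mathcal{H};\mathbb{R})$, taking adjoints term by term gives
\begin{equation*}
\bigl((d_{p})^{\ast}\circ d_{p}\bigr)^{\ast}=(d_{p})^{\ast}\circ \bigl((d_{p})^{\ast}\bigr)^{\ast}=(d_{p})^{\ast}\circ d_{p},
\end{equation*}
and likewise $\bigl(d_{p+1}\circ (d_{p+1})^{\ast}\bigr)^{\ast}=d_{p+1}\circ (d_{p+1})^{\ast}$. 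Adding these shows $(\Delta^{\mathcal{H}}_{p})^{\ast}=\Delta^{\mathcal{H}}_{p}$.

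Next, for non-negativity, I would take an arbitrary $x\in \mathrm{Inf}_{p}(\mathcal{H};\mathbb{R})$ and use the defining property of the adjoint to move operators across the inner product:
\begin{equation*}
\langle \Delta^{\mathcal{H}}_{p}x,x\rangle =\langle (d_{p})^{\ast}d_{p}x,x\rangle +\langle d_{p+1}(d_{p+1})^{\ast}x,x\rangle =\langle d_{p}x,d_{p}x\rangle +\langle (d_{p+1})^{\ast}x,(d_{p+1})^{\ast}x\rangle \geq 0,
\end{equation*}
since each term is the squared norm of a vector in the corresponding space. Combined with self-adjointness, this establishes that $\Delta^{\mathcal{H}}_{p}$ is non-negative definite.

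There is no real obstacle here; the statement is essentially a packaging of the universal fact that $T^{\ast}T+SS^{\ast}$ is always self-adjoint and positive semi-definite. The only subtlety worth mentioning is that $d_{p}$ is interpreted as a map $\mathrm{Inf}_{p}(\mathcal{H};\mathbb{R})\to \mathrm{Inf}_{p-1}(\mathcal{H};\mathbb{R})$ (not into the ambient $C_{p-1}(V;\mathbb{R})$), as guaranteed by the definition~\eqref{equation:infimum_chaincomplex} of the infimum chain complex; once this restriction is in place, the inner product inherited from $C_{\ast}(\mathcal{H};\mathbb{R})$ makes the formation of adjoints legitimate, and the argument goes through without modification.
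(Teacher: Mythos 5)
Your proof is correct and follows essentially the same route as the paper: both reduce the claim to the identity $\langle \Delta^{\mathcal{H}}_{p}x,y\rangle=\langle d_{p}x,d_{p}y\rangle+\langle (d_{p+1})^{\ast}x,(d_{p+1})^{\ast}y\rangle$ obtained by moving adjoints across the inner product, from which self-adjointness and non-negativity are immediate. No gaps to report.
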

  \begin{proof}
  Obviously, for any $x,y\in\mathrm{Inf}_{p}(\mathcal{H};\mathbb{R})$, we have
  \begin{equation*}
    \langle \Delta^{\mathcal{H}}_{p}x,y\rangle=\langle d_{p}x,d_{p}y\rangle+\langle (d_{p+1})^{\ast}x,(d_{p+1})^{\ast}y\rangle=\langle x,\Delta^{\mathcal{H}}_{p}y\rangle.
  \end{equation*}
  On the other hand, one has
  \begin{equation*}
    \langle \Delta^{\mathcal{H}}_{p}x,x\rangle=\langle d_{p}x,d_{p}x\rangle+\langle (d_{p+1})^{\ast}x,(d_{p+1})^{\ast}x\rangle\geq 0.
  \end{equation*}
  Thus $\Delta^{\mathcal{H}}_{p}$ is self-adjoint and non-negative definite.
  \end{proof}
  By the algebraic Hodge decomposition theorem, one has the following.
  \begin{proposition}
  $\mathrm{Inf}_{p}(\mathcal{H};\mathbb{R})=\ker \Delta^{\mathcal{H}}_{p}\oplus \mathrm{Im}d_{p}\oplus\mathrm{Im}(d_{p+1})^{\ast}$. Moreover, we have $\ker \Delta^{\mathcal{H}}_{p}=\ker d_{p}\cap \ker (d_{p+1})^{\ast}\cong H_{p}(\mathcal{H};\mathbb{R})$.
  \end{proposition}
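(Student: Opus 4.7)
The plan is to deploy the standard finite-dimensional algebraic Hodge decomposition, specialized to the inherited chain complex $(\mathrm{Inf}_{\ast}(\mathcal{H};\mathbb{R}),d_{\ast})$ with the restricted inner product. (I read the two nontrivial summands on the right as $\mathrm{Im}\,d_{p+1}$ and $\mathrm{Im}(d_{p})^{\ast}$, since those are the operators with codomain $\mathrm{Inf}_{p}(\mathcal{H};\mathbb{R})$.) Throughout, the key algebraic input is just $d_{p}\circ d_{p+1}=0$, which is guaranteed because $\mathrm{Inf}_{\ast}(\mathcal{H};\mathbb{R})$ was established to be a chain complex; the key analytic input is that adjoints exist and behave well in finite dimensions.

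First I would record pairwise orthogonality of the three proposed summands inside $\mathrm{Inf}_{p}(\mathcal{H};\mathbb{R})$. Orthogonality of $\mathrm{Im}\,d_{p+1}$ and $\mathrm{Im}(d_{p})^{\ast}$ is immediate from $\langle d_{p+1}a,(d_{p})^{\ast}b\rangle=\langle d_{p}d_{p+1}a,b\rangle=0$. For $x\in\ker\Delta^{\mathcal{H}}_{p}$, the inner-product identity $\langle\Delta^{\mathcal{H}}_{p}x,x\rangle=\|d_{p}x\|^{2}+\|(d_{p+1})^{\ast}x\|^{2}=0$ already used in the preceding proposition forces $d_{p}x=0$ and $(d_{p+1})^{\ast}x=0$. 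This simultaneously yields the characterization $\ker\Delta^{\mathcal{H}}_{p}=\ker d_{p}\cap\ker(d_{p+1})^{\ast}$, and orthogonality of $\ker\Delta^{\mathcal{H}}_{p}$ against each of $\mathrm{Im}\,d_{p+1}$ and $\mathrm{Im}(d_{p})^{\ast}$ via the same adjoint move.

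Next I would assemble the direct-sum decomposition by two successive orthogonal splittings. Finite-dimensionality gives the standard identity $\mathrm{Inf}_{p}(\mathcal{H};\mathbb{R})=\ker d_{p}\oplus(\ker d_{p})^{\perp}=\ker d_{p}\oplus\mathrm{Im}(d_{p})^{\ast}$. Inside $\ker d_{p}$, the chain-complex property gives $\mathrm{Im}\,d_{p+1}\subseteq\ker d_{p}$, and the orthogonal complement of $\mathrm{Im}\,d_{p+1}$ inside $\ker d_{p}$ equals $\ker d_{p}\cap\ker(d_{p+1})^{\ast}=\ker\Delta^{\mathcal{H}}_{p}$. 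Hence $\ker d_{p}=\ker\Delta^{\mathcal{H}}_{p}\oplus\mathrm{Im}\,d_{p+1}$, and concatenating with the first splitting produces the desired three-term orthogonal decomposition.

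Finally, the homological identification drops out for free: the inclusion $\ker\Delta^{\mathcal{H}}_{p}\hookrightarrow\ker d_{p}$ composed with the quotient $\ker d_{p}\twoheadrightarrow\ker d_{p}/\mathrm{Im}\,d_{p+1}=H_{p}(\mathcal{H};\mathbb{R})$ is bijective, because $\ker d_{p}=\ker\Delta^{\mathcal{H}}_{p}\oplus\mathrm{Im}\,d_{p+1}$ realizes the quotient as an internal complement. The main potential pitfall is not mathematical but bookkeeping: one must verify that every operator, adjoint, image, and kernel in the argument is taken inside $\mathrm{Inf}_{\ast}(\mathcal{H};\mathbb{R})$ rather than inside the ambient $C_{\ast}(V;\mathbb{R})$, so that the inclusion $\mathrm{Im}\,d_{p+1}\subseteq\ker d_{p}$ and the adjoint identities make sense within the subcomplex; beyond this there is no serious obstacle, as the whole argument is a direct specialization of finite-dimensional Hodge theory.
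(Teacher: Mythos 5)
Your proof is correct and is precisely the standard finite-dimensional algebraic Hodge decomposition argument that the paper invokes without proof (it simply cites ``the algebraic Hodge decomposition theorem''), so there is no divergence in approach. You were also right to read the summands as $\mathrm{Im}\,d_{p+1}$ and $\mathrm{Im}(d_{p})^{\ast}$: the indices in the stated decomposition are a typo, as confirmed by the correctly indexed persistent version $\Omega_{p}^{a}=\ker \Delta_{p}^{a,b}\oplus \mathrm{Im}\, d_{p+1}^{a,b}\oplus \mathrm{Im}\,( d_{p}^{a})^{\ast}$ later in the paper.
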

  The above proposition shows that the number of zero eigenvalues of $\Delta^{\mathcal{H}}_{p}$ equals to the $p$-th Betti number of $\mathcal{H}$.
  \begin{example}\label{example:hypergraph1}
  Let $\mathcal{H}=(V,E)$ be a hypergraph with $V=\{1,2,3\}$ and
  \begin{equation*}
    E=\{\{1\},\{2\},\{1,2\},\{1,3\},\{2,3\},\{1,2,3\}\}.
  \end{equation*}
  Thus, one has
  \begin{eqnarray*}
    C_{0}(\mathcal{H};\mathbb{R}) &=& \mathrm{span}\{\{1\},\{2\}\}, \\
    C_{1}(\mathcal{H};\mathbb{R}) &=& \mathrm{span}\{\{1,2\},\{1,3\},\{2,3\}\}, \\
    C_{2}(\mathcal{H};\mathbb{R}) &=& \mathrm{span}\{\{1,2,3\}\}.
  \end{eqnarray*}
  By Eq. (\ref{equation:infimum_chaincomplex}), we obtain
  \begin{eqnarray*}
    \mathrm{Inf}_{0}(\mathcal{H};\mathbb{R}) &=& \mathrm{span}\{\{1\},\{2\}\}, \\
    \mathrm{Inf}_{1}(\mathcal{H};\mathbb{R}) &=& \mathrm{span}\{\{1,2\},\{1,3\}-\{2,3\}\}, \\
    \mathrm{Inf}_{2}(\mathcal{H};\mathbb{R}) &=& \mathrm{span}\{\{1,2,3\}\}.
  \end{eqnarray*}
  The infimum complex $\mathrm{Inf}_{\ast}(\mathcal{H};\mathbb{R})$ refers to the maximal sub chain complex contained in the $\mathbb{K}$-linear space $C_{\ast}(\mathcal{H};\mathbb{R})$.
  Here, the standard orthogonal basis \footnote{It is to point out that in the work of persistent path Laplacian \cite{wang2023persistent}, the standard orthogonal basis was not chosen, and as a result, the calculated non-zero eigenvalues were not unique.} of $\mathrm{Inf}_{\ast}(\mathcal{H};\mathbb{R})$ is chosen as follows,
  \begin{equation*}
  \{1\},\{2\},\{1,2\},\frac{1}{\sqrt{2}}\{1,3\}-\frac{1}{\sqrt{2}}\{2,3\},\{1,2,3\}.
  \end{equation*}
  Then the boundary matrices are given by
  \begin{eqnarray*}
   d_0\left(
                       \begin{array}{cc}
                        \{1\}\\ \{2\} \\
                       \end{array}
                     \right)&=&\left(
                       \begin{array}{cc}
                        0\\ 0 \\
                       \end{array}
                     \right),\\
    d_{1}\left(
           \begin{array}{c}
             \{1,2\} \\
             \frac{1}{\sqrt{2}}\{1,3\}-\frac{1}{\sqrt{2}}\{2,3\} \\
           \end{array}
         \right) &=& \left(
                   \begin{array}{cc}
                     -1 & 1 \\
                     -\frac{1}{\sqrt{2}} & \frac{1}{\sqrt{2}} \\
                   \end{array}
                 \right)\left(
           \begin{array}{c}
             \{1\} \\
             \{2\} \\
           \end{array}
         \right), \\
     d_{2}\{1,2,3\} &=& \left(
                     \begin{array}{cc}
                       1 & -\sqrt{2} \\
                     \end{array}
                   \right)\left(
           \begin{array}{c}
             \{1,2\} \\
             \frac{1}{\sqrt{2}}\{1,3\}-\frac{1}{\sqrt{2}}\{2,3\} \\
           \end{array}
         \right).
  \end{eqnarray*}
  Thus the representation matrices of the Laplacians $\Delta^{\mathcal{H}}_{0},\Delta^{\mathcal{H}}_{1}, $ and $ \Delta^{\mathcal{H}}_{2}$ are listed as follows.
  \begin{equation*}
    L_{0}^{\mathcal{H}}=\left(
                   \begin{array}{cc}
                     -1 & 1 \\
                     -\frac{1}{\sqrt{2}} & \frac{1}{\sqrt{2}} \\
                   \end{array}
                 \right)^{T}\left(
                   \begin{array}{cc}
                     -1 & 1 \\
                     -\frac{1}{\sqrt{2}} & \frac{1}{\sqrt{2}} \\
                   \end{array}
                 \right)=\left(
                   \begin{array}{cc}
                     \frac{3}{2} & -\frac{3}{2} \\
                     -\frac{3}{2} & \frac{3}{2} \\
                   \end{array}
                 \right).
  \end{equation*}
  \begin{equation*}
   L_{1}^{\mathcal{H}}=\left(
                  \begin{array}{cc}
                    1 & -\sqrt{2} \\
                  \end{array}
                \right)^{T}\left(
                  \begin{array}{cc}
                    1 & -\sqrt{2} \\
                  \end{array}
                \right)+\left(
                   \begin{array}{cc}
                    -1 & 1 \\
                    -\frac{1}{\sqrt{2}} & \frac{1}{\sqrt{2}} \\
                   \end{array}
                 \right)\left(
                   \begin{array}{cc}
                    -1 & 1 \\
                    -\frac{1}{\sqrt{2}} & \frac{1}{\sqrt{2}} \\
                   \end{array}
                 \right)^{T}=\left(
                             \begin{array}{cc}
                               3 & 0 \\
                               0 & 3 \\
                             \end{array}
                           \right).
  \end{equation*}
  \begin{equation*}
    L_{2}^{\mathcal{H}}=\left(
                     \begin{array}{cc}
                       1 & -\sqrt{2} \\
                     \end{array}
                   \right)\left(
                     \begin{array}{cc}
                       1 & -\sqrt{2} \\
                     \end{array}
                   \right)^{T}=3.
  \end{equation*}
 Therefore,  the spectra for the Laplacian matrices $L_{0}^{\mathcal{H}},L_{1}^{\mathcal{H}}, $and$ L_{2}^{\mathcal{H}}$ are $\{0,3\},\{3,3\}, $ and $\{3\}$, respectively.
  \end{example}

  \begin{figure}[!ht]
    \centering
    \includegraphics[width=14cm]{./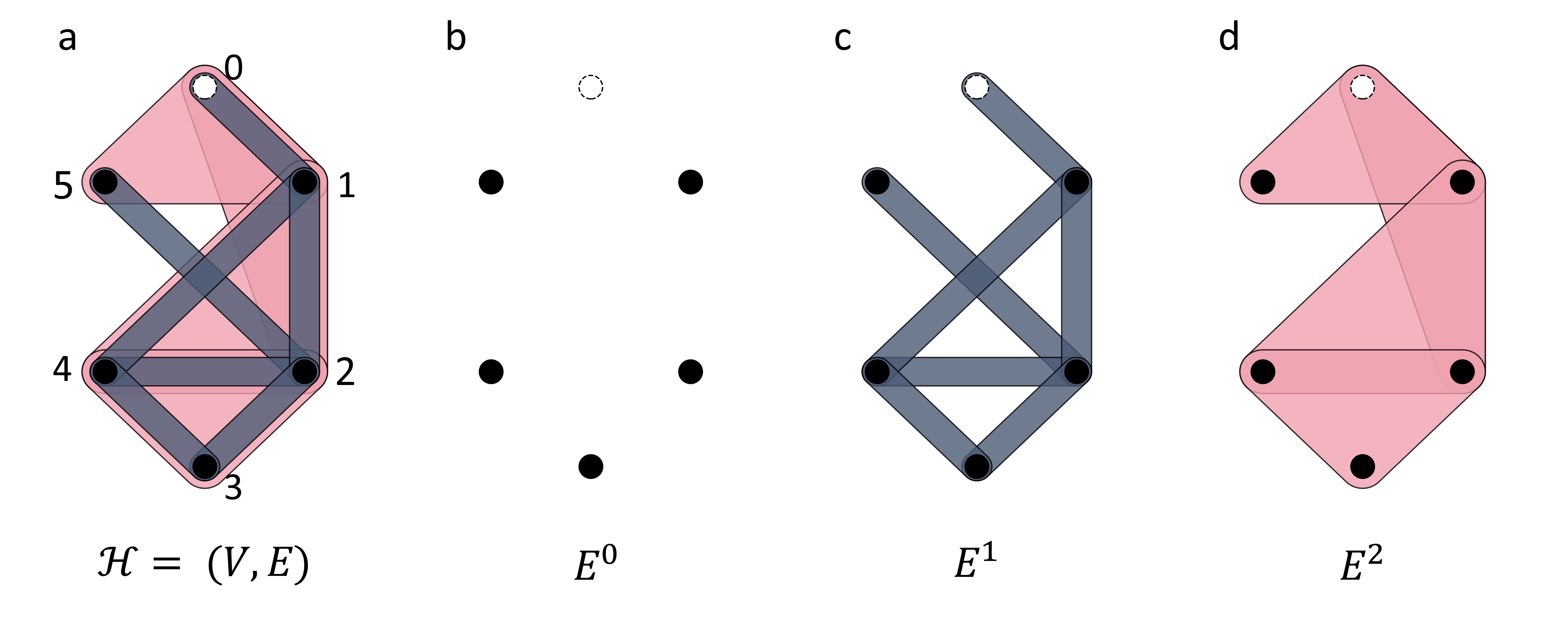}
    \caption{
      {\bf a} Illustration of hypergraph $\mathcal{H}$ in Example \ref{example:hypergraph2}.
      {\bf b}, {\bf c}, and {\bf d} Illustration of 0-hyperedges, 1-hyperedges, and 2-hyperedges, respectively. The solid black vertices indicate 0-hyperedges, purple edges indicate 1-hyperedges, and pink areas indicate 2-hyperedges. The dashed hollow circle indicates a vertex that is not 0-hyperedge.}
    \label{figure:hypergraph2}
  \end{figure}

  \begin{example}\label{example:hypergraph2}
    As shown in Figure \ref{figure:hypergraph2}{\bf a}, let $\mathcal{H}=(V,E)$ be a hypergraph with vertex set $V=\{0, 1, 2, 3, 4, 5\}$ and the hyperedge set $E$, where $E$ contains the 0-hyperedges $E^0=\{\{1\}, \{2\}, \{3\}, \{4\}, \{5\}\}$ (Figure \ref{figure:hypergraph2}{\bf b}), 1-hyperedges $E^1 = \{\{0, 1\}, \{1, 2\}, \{1, 4\}, \{2, 3\}, \{2, 4\}$, $\{2, 5\}, \{3, 4\}\}$ (Figure \ref{figure:hypergraph2}{\bf c}), and the 2-hyperedges $E^2=\{\{0, 1, 2\}, \{0, 1, 5\}, \{1, 2, 4\}, \{2, 3, 4\}\}$ (Figure \ref{figure:hypergraph2}{\bf d}). To follow Eq. (\ref{equation:infimum_chaincomplex}), the standard orthogonal basis of $\mathrm{Inf}_{*}(\mathcal{H};\mathbb{R})$ is chosen as follows,
    \begin{equation*}
      \{1\},\{2\},\{3\},\{4\},\{5\};\{1,2\}, \{1,4\}, \{2,3\}, \{2,4\},\{2, 5\}, \{3, 4\};\{1, 2, 4\}, \{2, 3, 4\}.
    \end{equation*}
    Then, the representation matrices of $d_0$,$d_1$, and $d_2$ are given as follows,
    \begin{eqnarray*}
      d_0\left(
                       \begin{array}{cc}
                        \{1\}\\ \{2\}\\ \{3\}\\ \{4\}\\ \{5\} \\
                       \end{array}
                     \right)&=&\left(
                       \begin{array}{cc}
                        0\\ 0\\ 0\\ 0\\ 0 \\
                       \end{array}
                     \right), \\
      d_{1}\left(
          \begin{array}{c}
            \{1,2\}\\ \{1,4\}\\ \{2,3\}\\ \{2,4\}\\ \{2, 5\}\\ \{3, 4\}\\
          \end{array}
        \right)&=&\left(
               \begin{array}{ccccc}
                -1& 1& 0&0&0\\
                -1& 0& 0&1&0\\
                 0&-1& 1&0&0\\
                 0&-1& 0&1&0\\
                 0&-1& 0&0&1\\
                 0& 0&-1&1&0\\
               \end{array}
             \right)\left(
                       \begin{array}{cc}
                        \{1\}\\ \{2\}\\ \{3\}\\ \{4\}\\ \{5\} \\
                       \end{array}
                     \right), \\
         d_{2}\left(
          \begin{array}{c}
            \{1, 2, 4\}\\ \{2, 3, 4\}\\
          \end{array}
         \right) &=& \left(
                         \begin{array}{cccccc}
                          1&-1&0& 1&0&0\\
                          0& 0&1&-1&0&1\\
                         \end{array}
                       \right)\left(
               \begin{array}{c}
                \{1,2\}\\ \{1,4\}\\ \{2,3\}\\ \{2,4\}\\ \{2, 5\}\\ \{3, 4\}\\
               \end{array}
             \right).
    \end{eqnarray*}
    Based on Eq. (\ref{equation:laplacian}), the representation matrices of the Laplacians are listed as follows,
    \begin{eqnarray*}
      L_{0}^{\mathcal{H}}&=&\left(
                     \begin{array}{ccccc}
                      2& -1&  0& -1& 0\\
                      -1&  4& -1& -1&-1\\
                       0& -1&  2& -1& 0\\
                      -1& -1& -1&  3& 0\\
                       0& -1&  0&  0& 1\\
                     \end{array}
                   \right), \\
      L_{1}^{\mathcal{H}}&=&\left(
                    \begin{array}{cccccc}
                      3& 0& -1&  0& -1& 0\\
                      0& 3&  0&  0&  0& 1\\
                     -1& 0&  3&  0&  1& 0\\
                      0& 0&  0&  4&  1& 0\\
                     -1& 0&  1&  1&  2& 0\\
                      0& 1&  0&  0&  0& 3\\
                    \end{array}
                  \right), \\
      L_{2}^{\mathcal{H}}&=&\left(
                    \begin{array}{cc}
                      3& -1\\
                      -1& 3\\
                    \end{array}
                  \right).
    \end{eqnarray*}
  Then, the spectra of Laplacian matrices can be generated, which are $\mathbf{Spec}(L_{0}^{\mathcal{H}})=\{0, 1, 2, 4, 5\}$, $\mathbf{Spec}(L_{1}^{\mathcal{H}})=\{1, 2, 2, 4, 4, 5\}$, and $\mathbf{Spec}(L_{2}^{\mathcal{H}})=\{2,4\}$. The Betti numbers are $\beta_0=1, \beta_1=0,  $ and $ \beta_2=0$, and the smallest eigenvalues of the non-harmonic spectra for $L_{0}^{\mathcal{H}},L_{1}^{\mathcal{H}}, $ and $ L_{2}^{\mathcal{H}}$ are $\lambda_0=1,\lambda_1=1$, and $\lambda_2=2$, respectively.

  \end{example}


\section{Hyperdigraph homology  and topological hyperdigraph Laplacians}\label{section:hyperdigraph}



There are various definitions of   hyperdigraphs \cite{ausiello2017directed,thakur2009linear}. Additionally, both directed hypergraphs and oriented hypergraphs are used to refer to hyperdigraphs. For the sake of simplicity,  we adopt the definition of hyperdigraphs as given in \cite{berge1984hypergraphs}, in which the hyperdigraph consists of sequences of distinct elements in a finite set. These sequences, called directed hyperedges, are the fundamental building blocks for hyperdigraph homology. In contrast, the building blocks for hypergraph homology are the sets of finite elements, while the building blocks for path homology are the directed paths.
The topological structures for hyperdigraph homology, hypergraph homology, and path homology are from the corresponding maximal chain complexes embedded in the spaces generated by their building blocks.
This section explores the hyperdigraph homology and topological hyperdigraph  Laplacians for hyperdigraphs. We follow the technique from topological hypergraphs and path complexes using sequences to construct chain complexes for hyperdigraph homology.
Our approach is inspired by  path homology \cite{grigor2012homologies,grigor2017homologies,grigor2020path} and embedded homology of hypergraphs \cite{bressan2019embedded}.

  \subsection{Hyperdigraph homology}
  A \emph{hyperdigraph} $\vec{\mathcal{H}}=(V,\vec{E})$ on $V$ is a pair such that $\vec{E}$ is a subset of $\mathbf{S}(V)=\coprod\limits_{n=1}^{|V|}\Sigma_{n}\times \mathbf{P}_{n}(V)$. Here, $\Sigma_{n}$ is the permutation group of order $n$. An element $(\sigma,e)\in \Sigma_{n}\times \mathbf{P}_{n}(V)$ is called a \emph{$\sigma$-directed $(n-1)$-hyperedge}. In particular, if all the directed hyperedges are trivially directed, i.e., the corresponding permutations are trivial permutations, then we say $\vec{\mathcal{H}}$ is \emph{undirected} or simply a hypergraph as usual.

 Let $\vec{\mathcal{H}}=(V,\vec{E}),\vec{\mathcal{H}}'=(V,\vec{E}')$ be two hyperdigraphs.
  A \emph{morphism of hyperdigraphs} $f:\vec{\mathcal{H}}\to \vec{\mathcal{H}}'$ is map $f:V\to V'$ such that $(\sigma,f(e))\in \vec{E}'$ for any $(\sigma,e)\in \vec{E}$.

  Let $X$ be an $n$-element ordered set. Let $\Sigma_{n} X$ be the set of permutations of elements in $X$.
  \begin{lemma}\label{formula:isomorphism}
  There is a bijection $\Sigma_{n}\times \{X\}\to \Sigma_{n}X$ between the set $\Sigma_{n}\times \{X\}$ and the set $\Sigma_{n}X$ given by
  \begin{equation*}
    \Sigma_{n}\times \{X\}\to \Sigma_{n}X,\quad (\sigma,\{X\})\mapsto (x_{\sigma(1)},x_{\sigma(2)},\dots,x_{\sigma(n)})
  \end{equation*}
  for $X=\{x_{1},x_{2},\dots,x_{n}\}$ and $\sigma\in \Sigma_{n}$.
  \end{lemma}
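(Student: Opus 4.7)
The plan is to exhibit the stated map, check it is well-defined, and then verify bijectivity by constructing an explicit two-sided inverse. Since $X=\{x_{1},x_{2},\dots,x_{n}\}$ is an ordered set, the tuple $(x_{\sigma(1)},\dots,x_{\sigma(n)})$ is unambiguously determined by $\sigma\in\Sigma_{n}$, so the map
\[
\Phi:\Sigma_{n}\times\{X\}\longrightarrow\Sigma_{n}X,\quad (\sigma,\{X\})\mapsto (x_{\sigma(1)},\dots,x_{\sigma(n)})
\]
is well-defined.

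For injectivity, I would suppose $\Phi(\sigma,\{X\})=\Phi(\tau,\{X\})$, so $(x_{\sigma(1)},\dots,x_{\sigma(n)})=(x_{\tau(1)},\dots,x_{\tau(n)})$. Since the $x_{i}$ are distinct, this gives $x_{\sigma(i)}=x_{\tau(i)}$ and hence $\sigma(i)=\tau(i)$ for every $i$, so $\sigma=\tau$. For surjectivity, given any permutation $(y_{1},\dots,y_{n})\in\Sigma_{n}X$ of the elements of $X$, the assignment $i\mapsto \sigma(i):=\Phi_{X}^{-1}(y_{i})$, where $\Phi_{X}:\{1,\dots,n\}\to X$ is the canonical enumeration $i\mapsto x_{i}$, yields a permutation $\sigma\in\Sigma_{n}$ with $\Phi(\sigma,\{X\})=(y_{1},\dots,y_{n})$. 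Equivalently, the inverse map sends a tuple to the unique $\sigma$ reading off indices from the canonical enumeration of $X$.

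There is no real obstacle here: the statement is essentially a tautological reformulation of the definition of $\Sigma_{n}$ acting on an ordered $n$-element set, and the only thing to be careful about is the role of the ordering on $X$ which makes the enumeration $x_{1},\dots,x_{n}$ canonical. A one-line sanity check on cardinalities, namely $|\Sigma_{n}\times\{X\}|=n!=|\Sigma_{n}X|$, together with injectivity already forces bijectivity between finite sets of equal size, so one could shorten the argument by verifying only injectivity and invoking this count.
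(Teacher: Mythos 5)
Your proof is correct and follows essentially the same route as the paper: surjectivity via reading off the unique permutation from the canonical enumeration of the ordered set $X$, and injectivity from the distinctness of the $x_{i}$ forcing $\sigma=\tau$ (the paper phrases this as $\tau^{-1}\sigma=1$). Your closing cardinality remark is a harmless optional shortcut not present in the paper.
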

  \begin{proof}
  The surjectivity is from the fact that each sequence $(x_{k_{1}},x_{k_{2}},\dots,x_{k_{n}})$ in $\Sigma_{n}X$ can be uniquely written as the permutation $\sigma$ action on the sequence $(x_{1},x_{2},\dots,x_{n})$. On the other hand, if $(x_{\sigma(1)},x_{\sigma(2)},\dots,x_{\sigma(n)})=(x_{\tau(1)},x_{\sigma(2)},\dots,x_{\tau(n)})$, the permutation group action shows that $\tau^{-1}\sigma=1$. Here, $1$ is the identity element. Thus we have $\sigma=\tau$, the injectivity follows.
  \end{proof}

  \begin{remark}
  We can also use the ordered sequence to define   hyperdigraph homology. Precisely,
  a hyperdigraph $\vec{\mathcal{H}}=(V,\vec{E})$ on $V$ is a pair such that $\vec{E}$ is a subset of the set of sequences of distinct elements in $V$. A directed $p$-hyperedge of $\mathcal{H}$ is a sequence $v_{0}v_{1}\cdots v_{p}$ of distinct elements in $V$. This definition is equivalent to the previous definition of the hyperdigraph.
  \end{remark}
  Recall that a $(p,q)$-shuffle permutation is a permutation $\sigma$ in $\Sigma_{p+q}$ such that
  \begin{equation*}
    \sigma(1)<\cdots<\sigma(p),\quad \sigma(p)<\cdots<\sigma(p+q).
  \end{equation*}
  Let $\vec{\mathcal{H}}=(V,\vec{E})$ be a hyperdigraph such that each directed hyperedge is equipped with a $(p,q)$-shuffle permutation. Then we call $\vec{\mathcal{H}}$ a \emph{shuffle-hyperdigraph}. In other words, for each directed hyperedge $(\sigma,e)$ of $\vec{\mathcal{H}}$, the permutation $\sigma$ is a $(p,q)$-shuffle permutation for some $p+q=|e|$. Note that the shuffle-hyperdigraph can lead to another definition of hyperdigraph \cite{gallo1993directed}.
  \begin{example}\label{example:acyclic}
  Recall that a directed acyclic graph (DAG) is a digraph containing no directed cycle \cite{harary2014graphical}.
  A path complex of an acyclic digraph is a hypergraph.
  A path complex such that each path has distinct vertices is a hyperdigraph. Let $(V,\mathcal{P})$ be a path complex given by $V=\{0,1,2\}$ and
  \begin{equation*}
    \mathcal{P}=\{e_{0},e_{1},e_{2},e_{01},e_{12},e_{20},e_{012}\}.
  \end{equation*}
  The path complex $(V,\mathcal{P})$ can be identified with the hyperdigraph $(V,\vec{E})$, where
  \begin{equation*}
    \vec{E}=\{(0),(1),(2),(0,1),(1,2),(2,0),(0,1,2)\}.
  \end{equation*}
  \end{example}
  Now, let $S_{p}(V;\mathbb{K})$ be the $\mathbb{K}$-linear space generated by the elements in $\Sigma_{p+1}\times \mathbf{P}_{p+1}(V)$. Then $S_{\ast}(V;\mathbb{K})=(S_{p}(V;\mathbb{K}))_{p\geq 0}$ is a chain complex with the boundary operator $d_{p}:S_{p}(V;\mathbb{K})\to S_{p-1}(V;\mathbb{K})$ given by $d_{p}=\sum\limits_{i=0}^{p}(-1)^{i}\partial_{i}$. Here,
  \begin{equation*}
    \partial_{i}(x_{0},x_{1},\dots,x_{p})=(x_{0},\dots,\widehat{x_{i}},\dots,x_{p}),\quad 0\leq i\leq p,
  \end{equation*}
  where $(x_{0},x_{1},\dots,x_{p})$ is a sequence in $\Sigma_{p+1}V$ as given in Lemma \ref{formula:isomorphism}. In particular, we make the convention $d_{0}e=0$ for each $e\in V$. It can be verified that $d_{p-1}\circ d_{p}=0$ on $S_{\ast}(V;\mathbb{K})$ for $p\geq 1$.
  Let $F_{p}(\vec{\mathcal{H}};\mathbb{K})$ be the $\mathbb{K}$-linear space generated by $\vec{E}^{p}$. Here, $\vec{E}^{p}\subseteq \Sigma_{p+1}\times \mathbf{P}_{p+1}(V)$ is the set of directed $p$-hyperedges of $\vec{\mathcal{H}}$. Obviously, $F_{p}(\vec{\mathcal{H}};\mathbb{K})$ is a subspace of $S_{p}(V;\mathbb{K})$. Recall that $S_{\ast}(V;\mathbb{K})=(S_{p}(V;\mathbb{K}))_{p\geq 0}$ is a chain complex. The boundary operator $d_{p}$ on $S_{p}(V;\mathbb{K})$ can restrict to a map $d_{p}:F_{p}(\vec{\mathcal{H}};\mathbb{K})\to S_{p-1}(V;\mathbb{K})$.
  We denote
  \begin{equation}\label{equation:omega_chaincomplex}
    \Omega_{p}(\vec{\mathcal{H}};\mathbb{K})=F_{p}(\vec{\mathcal{H}};\mathbb{K})\cap d^{-1}F_{p-1}(\vec{\mathcal{H}};\mathbb{K})=\{x\in F_{p}(\vec{\mathcal{H}};\mathbb{K})|dx\in F_{p-1}(\vec{\mathcal{H}};\mathbb{K})\}.
  \end{equation}
  Then $\Omega_{\ast}(\vec{\mathcal{H}};\mathbb{K})=(\Omega_{p}(\vec{\mathcal{H}};\mathbb{K}))_{p\geq 0}$ is a chain complex.
  \begin{definition}
  The \emph{embedded homology} of a hyperdigraph $\vec{\mathcal{H}}$ is defined by
  \begin{equation*}
    H_{p}(\vec{\mathcal{H}};\mathbb{K})=H_{p}(\Omega_{\ast}(\vec{\mathcal{H}};\mathbb{K})),\quad p\geq 0.
  \end{equation*}
  \end{definition}
  If $\vec{\mathcal{H}}=(V,E)$ is an (undirected) hyperdigraph, the above definition reduces to the embedded homology of hypergraphs.

  \begin{example}
  The embedded homology of topological hyperdigraphs can reflect more information than the embedded homology of topological hypergraphs. For example, consider the hyperdigraph $\vec{\mathcal{H}}=(V,\vec{E})$, where $V=\{0,1\}$ and $\vec{E}=\{(0,1),(1,0)\}$. A straightforward calculation shows that
  \begin{equation*}
    H_{p}(\vec{\mathcal{H}};\mathbb{K})=\left\{
                                    \begin{array}{ll}
                                      \mathbb{K}, & \hbox{$p=1$;} \\
                                      0, & \hbox{otherwise.}
                                    \end{array}
                                  \right.
  \end{equation*}
  The cycle in $F_{\ast}(\vec{\mathcal{H}};\mathbb{K})$ is generated by $(0,1)+(1,0)$. However, the $1$-dimensional homology of any topological hypergraph with vertex set $\{0,1\}$ is trivial.
  \end{example}

  \begin{example}
  Example \ref{example:acyclic} continued. Let $\mathcal{P}$ be a path complex such that each path has distinct vertices. We can regard $\mathcal{P}$ as a hyperdigraph. Then the embedded homology of $\mathcal{P}$ coincides with the path homology of $\mathcal{P}$, that is,
  \begin{equation*}
    H_{p}^{emb}(\mathcal{P};\mathbb{K})=H_{p}^{path}(\mathcal{P};\mathbb{K}),\quad p\geq 0.
  \end{equation*}
  \end{example}

 Following a similar discussion as in \cite{grbic2022aspects}, we can establish the following proposition, which ensures that the persistent hyperdigraph homology introduced in Section \ref{section:persistence} is a persistence module.
  \begin{proposition}
  The homology $H_{\ast}(-;\mathbb{K}):\vec{\mathbf{H}}\mathbf{yper}\to \mathbf{Vec}_{\mathbb{K}}$ is a functor from the category of hyperdigraphs to the category of $\mathbb{K}$-linear spaces.
  \end{proposition}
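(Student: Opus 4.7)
The plan is to promote each hyperdigraph morphism $f:\vec{\mathcal{H}}\to\vec{\mathcal{H}}'$ to a chain map $\Omega_{*}(\vec{\mathcal{H}};\mathbb{K})\to\Omega_{*}(\vec{\mathcal{H}}';\mathbb{K})$ between the embedded chain complexes, then pass to homology and check the functor axioms. Since $\Omega_{*}(\vec{\mathcal{H}};\mathbb{K})\subseteq F_{*}(\vec{\mathcal{H}};\mathbb{K})\subseteq S_{*}(V;\mathbb{K})$, I would first produce a chain map on the ambient complex $S_{*}(V;\mathbb{K})\to S_{*}(V';\mathbb{K})$ and then verify two compatibilities in turn: the ambient map carries $F_{*}(\vec{\mathcal{H}};\mathbb{K})$ into $F_{*}(\vec{\mathcal{H}}';\mathbb{K})$, and then carries $\Omega_{*}(\vec{\mathcal{H}};\mathbb{K})$ into $\Omega_{*}(\vec{\mathcal{H}}';\mathbb{K})$.

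On generators I would define $f_{p}\colon S_{p}(V;\mathbb{K})\to S_{p}(V';\mathbb{K})$ by $(x_{0},\dots,x_{p})\mapsto (f(x_{0}),\dots,f(x_{p}))$ when the $f(x_{i})$ are pairwise distinct and $0$ otherwise, extended linearly; this is the standard convention used for path homology and for the hypergraph case in \cite{grbic2022aspects}. Verifying $d\circ f_{p}=f_{p-1}\circ d$ is routine when $f$ is injective on the support of the generator, and in the remaining case one checks that the degenerate faces coming from a collision $f(x_{i})=f(x_{j})$ cancel in pairs, exactly as in the simplicial-set argument. To see that $f_{p}$ sends $F_{p}(\vec{\mathcal{H}};\mathbb{K})$ into $F_{p}(\vec{\mathcal{H}}';\mathbb{K})$, I would pick a generator $(\sigma,e)\in\vec{E}^{p}$, identify it via Lemma \ref{formula:isomorphism} with the sequence $(x_{\sigma(1)},\dots,x_{\sigma(p+1)})$, observe that the morphism condition $(\sigma,f(e))\in\vec{E}'$ forces $|f(e)|=|e|$ so the $f(x_{i})$ are automatically distinct, and then note that the image sequence corresponds under the same bijection to $(\sigma,f(e))\in\vec{E}'^{p}$, so it lies in $F_{p}(\vec{\mathcal{H}}';\mathbb{K})$.

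The restriction to $\Omega_{*}$ is then a one-line consequence: if $x\in\Omega_{p}(\vec{\mathcal{H}};\mathbb{K})$ then $x\in F_{p}(\vec{\mathcal{H}};\mathbb{K})$ gives $f_{p}(x)\in F_{p}(\vec{\mathcal{H}}';\mathbb{K})$, and $dx\in F_{p-1}(\vec{\mathcal{H}};\mathbb{K})$ combined with the chain-map property gives $d(f_{p}(x))=f_{p-1}(dx)\in F_{p-1}(\vec{\mathcal{H}}';\mathbb{K})$, so $f_{p}(x)\in\Omega_{p}(\vec{\mathcal{H}}';\mathbb{K})$ by definition \eqref{equation:omega_chaincomplex}. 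The induced map on $H_{*}$ is then well defined, and functoriality ($\mathrm{id}_{*}=\mathrm{id}$ and $(g\circ f)_{*}=g_{*}\circ f_{*}$) already holds at the level of $S_{*}$ because the definition of $f_{p}$ is manifestly compatible with composition and sends the identity vertex map to the identity on generators. The only genuine technical point, and where I would focus most of the care, is the chain-map verification under the degenerate-sequence convention; once that is in hand, everything else is bookkeeping against the morphism condition $(\sigma,f(e))\in\vec{E}'$.
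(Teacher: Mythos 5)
Your overall architecture is the intended one (the paper offers no proof beyond a pointer to \cite{grbic2022aspects}, but its later construction of $F_{p}(f):(\sigma,e)\mapsto(\sigma,f(e))$ in the proof about $H_{\ast}(\pi)$ confirms this is the route): push the vertex map to a linear map on generating sequences, check that it preserves $F_{\ast}$, deduce preservation of $\Omega_{\ast}$ from the chain-map identity, and read off functoriality on generators. Your last two steps are exactly right and are the only ones that ultimately matter.

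The step I would not let stand is the chain-map verification on the ambient complex $S_{\ast}(V;\mathbb{K})$ under the ``send degenerate sequences to $0$'' convention. The pairwise cancellation you invoke works in the simplicial-set setting because one quotients by degeneracies; in $S_{\ast}$, whose generators are sequences of \emph{distinct} elements with no such quotient, nonadjacent collisions do not cancel. Concretely, take $(x_{0},x_{1},x_{2})$ with $f(x_{0})=f(x_{2})=a\neq b=f(x_{1})$: then $d f_{2}(x_{0},x_{1},x_{2})=0$ while $f_{1}d(x_{0},x_{1},x_{2})=(b,a)-0+(a,b)\neq 0$, so $f_{\ast}$ is not a chain map on all of $S_{\ast}(V;\mathbb{K})$. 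Fortunately you never need it to be: as you yourself observe, the morphism condition $(\sigma,f(e))\in\vec{E}'$ forces $f$ to be injective on every hyperedge $e$, hence on every face of $e$, so on the subspace spanned by sequences supported on hyperedges of $\vec{\mathcal{H}}$ --- which contains $F_{\ast}(\vec{\mathcal{H}};\mathbb{K})$ and is closed under $d$ --- the identity $d\circ f_{p}=f_{p-1}\circ d$ holds termwise with no degenerate faces ever appearing. State the chain-map property only there and drop the cancellation claim. One further point deserves a sentence: the image sequence $(f(x_{\sigma(1)}),\dots,f(x_{\sigma(p+1)}))$ corresponds under Lemma \ref{formula:isomorphism} to $(\sigma,f(e))$ only when $f$ is order-preserving on $e$; otherwise it corresponds to $(\sigma',f(e))$ for a possibly different permutation $\sigma'$, while the morphism condition only places $(\sigma,f(e))$ in $\vec{E}'$. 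This ambiguity is already present in the paper's own definitions, but your write-up should either restrict to order-preserving vertex maps or work with the sequence-level description of hyperdigraphs from the paper's Remark, under which your identification holds by definition.
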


  \subsection{From hyperdigraph homology to hypergraph homology}

  Now, let $\vec{\mathcal{H}}=(V,\vec{E})$ be a hyperdigraph. For each directed hyperedge $(\sigma,e)$, we have a subset $e$ of $\mathbf{P}(V)$. Thus we have a hypergraph $\overline{\mathcal{H}}=(V,E)$, where each hyperedge $e$ in $E$ is a projection of some directed hyperedge $(\sigma,e)$ in $\vec{E}$. We call $\overline{H}$ the \emph{reduced hypergraph} of the hyperdigraph $\vec{\mathcal{H}}$.
  Moreover, we have a map of $\mathbb{K}$-linear spaces
  \begin{equation*}
    \pi: F_{\ast}(\vec{\mathcal{H}};\mathbb{K})\to C_{\ast}(\overline{\mathcal{H}};\mathbb{K})
  \end{equation*}
  given by $\pi(\sigma,e)= (-1)^{\epsilon(\sigma)} e$. Here, $\epsilon(\sigma)$ is the Levi-Civita symbol of the permutation $\sigma$.  Then $\pi(\vec{\mathcal{H}})$ is a hypergraph. Thus we have the commutative diagram
  \begin{equation*}
      \xymatrix{
    F_{\ast}(\vec{\mathcal{H}};\mathbb{K})\ar@{->}[r]^{\pi}\ar@{^{(}->}[d]&C_{\ast}(\overline{\mathcal{H}};\mathbb{K})\ar@{^{(}->}[d]\\
    S_{\ast}(V;\mathbb{K})\ar@{->}[r]^{\pi}&C_{\ast}(V;\mathbb{K}).
    }
  \end{equation*}
  It induces a $\mathbb{K}$-linear map
  \begin{equation*}
    \pi:\Omega_{\ast}(\vec{\mathcal{H}};\mathbb{K})\to \mathrm{Inf}_{\ast}(\overline{\mathcal{H}};\mathbb{K}).
  \end{equation*}

  \begin{lemma}
  The map $\pi:\Omega_{\ast}(\vec{\mathcal{H}};\mathbb{K})\to \mathrm{Inf}_{\ast}(\overline{\mathcal{H}};\mathbb{K})$ is a chain map.
  \end{lemma}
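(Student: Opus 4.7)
The plan is to first establish the commutativity of $\pi$ with the boundary operators at the level of the ambient chain complexes $S_{\ast}(V;\mathbb{K})\to C_{\ast}(V;\mathbb{K})$, and then restrict to the subcomplexes $\Omega_{\ast}(\vec{\mathcal{H}};\mathbb{K})$ and $\mathrm{Inf}_{\ast}(\overline{\mathcal{H}};\mathbb{K})$. This is a natural two-step reduction: the outer square in the commutative diagram before the lemma tells us that the chain map property for $\pi$ on $S_{\ast}(V;\mathbb{K})$ automatically passes to any pair of subspaces on which $\pi$ and $d$ restrict compatibly.

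First, I would verify the key sign identity $\pi\circ d_{p}=d_{p}\circ \pi\colon S_{p}(V;\mathbb{K})\to C_{p-1}(V;\mathbb{K})$. Let $(\sigma,e)\in\Sigma_{p+1}\times\mathbf{P}_{p+1}(V)$ correspond via Lemma \ref{formula:isomorphism} to the sequence $(x_{\sigma(0)},x_{\sigma(1)},\dots,x_{\sigma(p)})$, where $e=\{x_{0}<x_{1}<\cdots<x_{p}\}$. On the one hand,
\begin{equation*}
d_{p}\pi(\sigma,e)=(-1)^{\epsilon(\sigma)}\sum_{k=0}^{p}(-1)^{k}\{x_{0},\dots,\widehat{x_{k}},\dots,x_{p}\}.
\end{equation*}
On the other hand, for each $i$, let $\sigma_{i}$ denote the permutation that sorts the sequence with $x_{\sigma(i)}$ omitted; then
\begin{equation*}
\pi\partial_{i}(\sigma,e)=(-1)^{\epsilon(\sigma_{i})}\{x_{0},\dots,\widehat{x_{\sigma(i)}},\dots,x_{p}\}.
\end{equation*}
The combinatorial identity $(-1)^{i}(-1)^{\epsilon(\sigma_{i})}=(-1)^{\sigma(i)}(-1)^{\epsilon(\sigma)}$, obtained by tracking inversions when an element is deleted from the sorted sequence, then yields the equality after reindexing $k=\sigma(i)$.

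Next, I would check that $\pi$ sends $\Omega_{p}(\vec{\mathcal{H}};\mathbb{K})$ into $\mathrm{Inf}_{p}(\overline{\mathcal{H}};\mathbb{K})$. By construction, $\pi$ maps a directed hyperedge of $\vec{\mathcal{H}}$ to a scalar multiple of a hyperedge of $\overline{\mathcal{H}}$, so $\pi\bigl(F_{p}(\vec{\mathcal{H}};\mathbb{K})\bigr)\subseteq C_{p}(\overline{\mathcal{H}};\mathbb{K})$. Given $x\in \Omega_{p}(\vec{\mathcal{H}};\mathbb{K})$, the definition in Eq.~(\ref{equation:omega_chaincomplex}) gives $dx\in F_{p-1}(\vec{\mathcal{H}};\mathbb{K})$, hence by the ambient commutativity $d\pi(x)=\pi(dx)\in \pi\bigl(F_{p-1}(\vec{\mathcal{H}};\mathbb{K})\bigr)\subseteq C_{p-1}(\overline{\mathcal{H}};\mathbb{K})$. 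Therefore $\pi(x)$ lies in $\mathrm{Inf}_{p}(\overline{\mathcal{H}};\mathbb{K})$ by Eq.~(\ref{equation:infimum_chaincomplex}), and the chain map property $d\circ\pi=\pi\circ d$ on $\Omega_{\ast}(\vec{\mathcal{H}};\mathbb{K})$ follows at once from the corresponding identity on $S_{\ast}(V;\mathbb{K})$.

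The main obstacle is the sign bookkeeping in the first step. Although essentially classical---it is the same identity that compares ordered and unordered simplicial chain complexes---one has to be careful that the permutation $\sigma_{i}$ arising after deletion is tracked correctly so that the signs $(-1)^{i}$ coming from the sequence boundary match the signs $(-1)^{\sigma(i)}$ coming from the set boundary. Once this is handled, the rest of the argument is a purely formal descent to subspaces.
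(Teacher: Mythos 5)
Your proposal is correct and follows essentially the same route as the paper: both reduce the lemma to the sign identity comparing the sequence boundary with the set boundary on the ambient complexes (your $(-1)^{i}(-1)^{\epsilon(\sigma_{i})}=(-1)^{\sigma(i)}(-1)^{\epsilon(\sigma)}$ is exactly the paper's $(-1)^{n_{i}(\sigma)}=(-1)^{\sigma(i)-i}$, since $\epsilon(\sigma)\equiv\epsilon(\sigma_{i})+n_{i}(\sigma)\pmod 2$), and then descend to the subcomplexes. Your explicit check that $\pi$ carries $\Omega_{p}(\vec{\mathcal{H}};\mathbb{K})$ into $\mathrm{Inf}_{p}(\overline{\mathcal{H}};\mathbb{K})$ is a small point the paper leaves implicit, but it does not change the substance of the argument.
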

  \begin{proof}
  It suffices to show the following diagram commutes.
  \begin{equation*}
    \xymatrix{
    F_{p}(\vec{\mathcal{H}};\mathbb{K})\ar@{->}[r]^{\pi}\ar@{->}[d]_{d_{p}}&C_{p}(\overline{\mathcal{H}};\mathbb{K})\ar@{->}[d]^{d_{p}}\\
    S_{p-1}(V;\mathbb{K})\ar@{->}[r]^{\pi}\ar@{->}[r]&C_{p-1}(V;\mathbb{K}).
    }
  \end{equation*}
  We will show $(-1)^{\sigma(i)}\partial_{\sigma(i)}\circ\pi(v_{\sigma(0)},v_{\sigma(1)},\dots,v_{\sigma(p)})=(-1)^{i}\pi\circ \partial_{i}(v_{\sigma(0)},v_{\sigma(1)},\dots,v_{\sigma(p)})$, which implies $\partial_{p}\circ\pi=\pi\circ \partial_{p}$. Here, $e=\{v_{0},v_{1},\dots,v_{p}\}$ is an ordered set. By a direct calculation, one has
  \begin{equation*}
    (-1)^{\sigma(i)}\partial_{\sigma(i)}\circ\pi (v_{\sigma(0)},v_{\sigma(1)},\dots,v_{\sigma(p)})=(-1)^{\epsilon(\sigma)+\sigma(i)}\partial_{\sigma(i)}e.
  \end{equation*}
  On the other hand, let $n_{i}(\sigma)$ be the sum of $|\{j<i|v_{\sigma(j)}>v_{\sigma(i)}\}|$ and $|\{j>i|v_{\sigma(j)}<v_{\sigma(i)}\}|$. Then we have
  \begin{equation*}
    \pi\circ \partial_{i}(v_{\sigma(0)},v_{\sigma(1)},\dots,v_{\sigma(p)})=(-1)^{i+\epsilon(\sigma)+n_{i}(\sigma)}\partial_{\sigma(i)}e.
  \end{equation*}
  If we fix $v_{\sigma(i)}$ in the sequence $v_{\sigma(0)},v_{\sigma(1)},\dots,v_{\sigma(p)}$ and permutate any two other elements $v_{\sigma(s)},v_{\sigma(t)}$, we obtain a sequence $v_{\tau(0)},v_{\sigma(1)},\dots,v_{\tau(p)}$ such that $\tau(j)=\sigma(j)$ for $j\neq s,t$ and $\tau(s)=\sigma(t),\tau(t)=\sigma(s)$. It is obvious that $n_{i}(\sigma)$ and $n_{i}(\tau)$ differ by an even number. This permutation does not change the signature. So the signature of $(v_{\sigma(0)},v_{\sigma(1)},\dots,v_{\sigma(p)})$ can be reduced to the signature of the sequence
  \begin{equation*}
    (v_{0},\dots,v_{i-1},v_{\sigma(i)},v_{i+1},\dots,v_{\sigma(i)-1},v_{i},v_{\sigma(i)+1},\dots,v_{p})
  \end{equation*}
  for $\sigma(i)>i$ or the sequence
  \begin{equation*}
    (v_{0},\dots,v_{\sigma(i)-1},v_{i},v_{\sigma(i)+1},\dots,v_{i-1},v_{\sigma(i)},v_{i+1},\dots,v_{p})
  \end{equation*}
  for $\sigma(i)<i$.
  It follows that $(-1)^{n_{i}(\sigma)}=(-1)^{\sigma(i)-i}$. Thus we have $\partial_{p}\circ\pi=\pi\circ \partial_{p}$.
  \end{proof}
  The above chain morphism induces a morphism of embedded homology.
  \begin{theorem}
  Let $\vec{\mathcal{H}}$ be a hyperdigraph. There is a natural morphism
  \begin{equation*}
    H_{\ast}(\pi):H_{\ast}(\vec{\mathcal{H}};\mathbb{K})\to H_{\ast}(\overline{\mathcal{H}};\mathbb{K}).
  \end{equation*}
  \end{theorem}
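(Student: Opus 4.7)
The plan is to leverage the lemma immediately above, which shows that $\pi:\Omega_{\ast}(\vec{\mathcal{H}};\mathbb{K})\to \mathrm{Inf}_{\ast}(\overline{\mathcal{H}};\mathbb{K})$ is a chain map. Since $H_{p}(\vec{\mathcal{H}};\mathbb{K})$ is by definition $H_{p}(\Omega_{\ast}(\vec{\mathcal{H}};\mathbb{K}))$ and $H_{p}(\overline{\mathcal{H}};\mathbb{K})=H_{p}(\mathrm{Inf}_{\ast}(\overline{\mathcal{H}};\mathbb{K}))$, the standard functoriality of homology on chain complexes at once produces the induced $\mathbb{K}$-linear map $H_{\ast}(\pi)$ in each degree. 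This handles existence of the morphism.

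For the naturality part of the statement, I would first make the reduction $\vec{\mathcal{H}}\mapsto\overline{\mathcal{H}}$ explicitly functorial: given a hyperdigraph morphism $f:\vec{\mathcal{H}}\to \vec{\mathcal{H}}'$, which is a vertex map $f:V\to V'$ with $(\sigma,f(e))\in \vec{E}'$ for every $(\sigma,e)\in \vec{E}$, forgetting the directions yields a morphism $\overline{f}:\overline{\mathcal{H}}\to \overline{\mathcal{H}}'$ of the underlying hypergraphs. I would then check the naturality square at the level of the ambient chain complexes on a basis element $(\sigma,e)\in \vec{E}^{p}$: one computes $\overline{f}_{\ast}\circ\pi(\sigma,e)=(-1)^{\epsilon(\sigma)}f(e)$ and $\pi\circ f_{\ast}(\sigma,e)=\pi(\sigma,f(e))=(-1)^{\epsilon(\sigma)}f(e)$, so the two paths agree. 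Since the induced chain maps $f_{\ast}$ and $\overline{f}_{\ast}$ commute with the respective boundary operators, this commutativity restricts to the subcomplexes $\Omega_{\ast}$ and $\mathrm{Inf}_{\ast}$, and then passes to homology, giving the required naturality of $H_{\ast}(\pi)$.

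The main obstacle I anticipate is a small but genuine bookkeeping subtlety in the definition of a hyperdigraph morphism: if $f:V\to V'$ fails to be injective on some hyperedge $e$, then $f(e)$ has strictly fewer elements than $e$, and the permutation $\sigma$ does not canonically act on the shorter sequence. I would resolve this either by restricting attention to morphisms that are injective on every hyperedge (the convention used in path homology and embedded hypergraph homology) or by extending the chain-level map so that $(\sigma,e)$ is sent to $0$ whenever $f|_{e}$ is non-injective. Either convention makes the reduction functor $\overline{(\cdot)}$ well-defined and preserves the naturality square, so that $H_{\ast}(\pi)$ becomes a genuine natural transformation between the two embedded-homology functors.
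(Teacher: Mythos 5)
Your proposal is correct and follows essentially the same route as the paper: existence of $H_{\ast}(\pi)$ from the preceding lemma that $\pi$ is a chain map, and naturality from the basis-element computation $C_{p}(\overline{f})\circ\pi(\sigma,e)=(-1)^{\epsilon(\sigma)}f(e)=\pi\circ F_{p}(f)(\sigma,e)$, which is exactly the square the paper checks. Your remark about morphisms that are non-injective on a hyperedge flags a genuine convention the paper leaves implicit, but it does not change the argument.
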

  \begin{proof}
  Let $f:\mathcal{H}\to \mathcal{H}'$ be a morphism of hyperdigraphs. Then there is $\mathbb{K}$-linear maps
  \begin{equation*}
    F_{p}(f):F_{p}(\vec{\mathcal{H}};\mathbb{K})\to F_{p}(\vec{\mathcal{H}}';\mathbb{K}),\quad (\sigma,e)\mapsto (\sigma,f(e))
  \end{equation*}
  and
  \begin{equation*}
    C_{p}(\overline{f}):C_{p}(\overline{\mathcal{H}};\mathbb{K})\to C_{p}(\overline{\mathcal{H}'};\mathbb{K}),\quad e\mapsto f(e).
  \end{equation*}
  Consider the following diagram
  \begin{equation*}
    \xymatrix{
    F_{p}(\vec{\mathcal{H}};\mathbb{K})\ar@{->}[r]^{\pi}\ar@{->}[d]_{F_{p}(f)}&C_{p}(\overline{\mathcal{H}};\mathbb{K})\ar@{->}[d]^{C_{p}(\overline{f})}\\
    F_{p}(\vec{\mathcal{H}}';\mathbb{K})\ar@{->}[r]^{\pi}\ar@{->}[r]&C_{p}(\overline{\mathcal{H}'};\mathbb{K}).
    }
  \end{equation*}
  For any $(\sigma,e)\in \vec{E}$, one has
  \begin{equation*}
    C_{p}(\overline{f})\circ\pi(\sigma,e)=(-1)^{\epsilon(\sigma)}f(e)=\pi\circ F_{p}(f)(\sigma,e).
  \end{equation*}
  Thus we obtain $C_{p}(\overline{f})\circ\pi=\pi\circ F_{p}(f)$. Since the embedded hyperdigraph homology $H_{\ast}$ is a functor, we have the desired result.
  \end{proof}
  Generally, the map $H_{\ast}(\pi)$ is not necessarily a surjection or an injection.
  \begin{example}
  Let $\vec{\mathcal{H}}_{1}=(V_{1},\vec{E}_{1})$ be a hyperdigraph with $V_{1}=\{1,2,3\}$ and
  \begin{equation*}
    \vec{E}_{1}=\{(1,2),(1,3),(3,2),(1,2,3)\}.
  \end{equation*}
  It follows that $H_{p}(\vec{\mathcal{H}}_{1};\mathbb{K})=\left\{
                                                   \begin{array}{ll}
                                                     \mathbb{K}, & \hbox{$p=1$;} \\
                                                     0, & \hbox{otherwise.}
                                                   \end{array}
                                                 \right.
  $ and $H_{p}(\overline{\mathcal{H}_{1}};\mathbb{K})=0$. Thus $H_{\ast}(\pi):H_{\ast}(\vec{\mathcal{H}}_{1};\mathbb{K})\to H_{\ast}(\overline{\mathcal{H}_{1}};\mathbb{K})$ is not an injection.
  Let $\vec{\mathcal{H}}_{2}=(V_{2},\vec{E}_{2})$ be a hyperdigraph with $V_{2}=\{1,2,3,4\}$ and
  \begin{equation*}
    \vec{E}_{2}=\{(1,2,3),(2,3,4),(3,4,1),(4,1,2)\}.
  \end{equation*}
  Then we have $H_{p}(\vec{\mathcal{H}}_{2};\mathbb{K})=0$ and $H_{p}(\overline{\mathcal{H}_{2}};\mathbb{K})=\left\{
                                                   \begin{array}{ll}
                                                     \mathbb{K}, & \hbox{$p=2$;} \\
                                                     0, & \hbox{otherwise.}
                                                   \end{array}
                                                 \right.
  $ Then the map $H_{\ast}(\pi):H_{\ast}(\vec{\mathcal{H}}_{2};\mathbb{K})\to H_{\ast}(\overline{\mathcal{H}_{2}};\mathbb{K})$ is not a surjection.
  \end{example}

  \subsection{Topological hyperdigraph Laplacians}
%
  In this section, we will introduce topological hyperdigraph Laplacians as  a mathematical tool that can be used to study high-dimensional topological structures. Furthermore, it can be used to develop a topological persistence that can be applied to data analysis. We will provide some examples and calculations to illustrate these concepts for readers. From now on, we always take $\mathbb{K}=\mathbb{R}$.

  Let $\vec{\mathcal{H}}=(V,\vec{E})$ be a hyperdigraph. We have an infimum complex
  \begin{equation*}
    \cdots\to \Omega_{p+1}(\vec{\mathcal{H}};\mathbb{R})\stackrel{d_{p+1}}{\to} \Omega_{p}(\vec{\mathcal{H}};\mathbb{R})\stackrel{d_{p}}{\to} \Omega_{p-1}(\vec{\mathcal{H}};\mathbb{R})\to \cdots.
  \end{equation*}
  We regard the collection of directed hyperedges $\vec{E}_{1},\vec{E}_{2},\cdots,\vec{E}_{n}$ as a standard basis of the $\mathbb{K}$-linear space $F_{\ast}(\vec{\mathcal{H}};\mathbb{R})$. Here, $n=|\vec{E}|$ is the number of directed hyperedges of $\vec{\mathcal{H}}$. Let $v_{1},v_{2},\dots,v_{N}$ be a standard basis of $\mathbb{K}[V]$. Here, $N=|V|$. Endow $F_{\ast}(\vec{\mathcal{H}};\mathbb{R})$ and $\mathbb{K}[V]$ with the standard inner products, respectively, given by
  \begin{equation*}
    \langle   v_{i},  v_{j}\rangle =\left\{
                      \begin{array}{ll}
                        1, & \hbox{$i=j$;} \\
                        0, & \hbox{otherwise.}
                      \end{array}
                    \right.\quad   \langle   \vec{E}_{i},  \vec{E}_{j}\rangle =\left\{
                      \begin{array}{ll}
                        1, & \hbox{$i=j$;} \\
                        0, & \hbox{otherwise.}
                      \end{array}
                    \right.
  \end{equation*}
  Then the chain complex $\Omega_{\ast}(\vec{\mathcal{H}};\mathbb{R})$ inherits the inner product of $F_{\ast}(\vec{\mathcal{H}};\mathbb{R})$ as subspace. The adjoint functor of $d_{p}:\Omega_{p}(\vec{\mathcal{H}};\mathbb{R})\to \Omega_{p-1}(\vec{\mathcal{H}};\mathbb{R})$ is a $\mathbb{K}$-linear map $(d_{p})^{\ast}:\Omega_{p-1}(\vec{\mathcal{H}};\mathbb{R})\to \Omega_{p}(\vec{\mathcal{H}};\mathbb{R})$ such that
  \begin{equation*}
    \langle d_{p}x,y\rangle=\langle x,(d_{p})^{\ast}y\rangle
  \end{equation*}
  for any $x\in \Omega_{p}(\vec{\mathcal{H}};\mathbb{R})$ and $y\in \Omega_{p-1}(\vec{\mathcal{H}};\mathbb{R})$.

  \begin{definition}
  The \emph{hyperdigraph Laplacian} $\Delta^{\vec{\mathcal{H}}}_{p}:\Omega_{p}(\vec{\mathcal{H}};\mathbb{R})\to \Omega_{p}(\vec{\mathcal{H}};\mathbb{R})$ of $\vec{\mathcal{H}}$ is defined by
  \begin{equation*}
    \Delta^{\vec{\mathcal{H}}}_{p}=(d_{p})^{\ast}\circ d_{p}+d_{p+1}\circ (d_{p+1})^{\ast},\quad p\geq 0.
  \end{equation*}
  \end{definition}
  Similar to the discussion in Section \ref{section:hypergraph}, we have the following propositions.
  \begin{proposition}
  The operator $\Delta^{\vec{\mathcal{H}}}_{p}$ is self-adjoint and non-negative definite.
  \end{proposition}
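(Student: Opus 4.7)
The plan is to mimic the analogous argument for $\Delta^{\mathcal{H}}_{p}$ given in Section~\ref{section:hypergraph} almost verbatim, since the only ingredients needed are the existence of the adjoint $(d_{p})^{\ast}$ with respect to the inherited inner product, together with the identity $\langle d_{p}x,y\rangle=\langle x,(d_{p})^{\ast}y\rangle$. These are already in place on $\Omega_{\ast}(\vec{\mathcal{H}};\mathbb{R})$: $F_{\ast}(\vec{\mathcal{H}};\mathbb{R})$ was endowed with the standard inner product, $\Omega_{p}(\vec{\mathcal{H}};\mathbb{R})$ inherits it as a subspace, and the adjoint $(d_{p})^{\ast}:\Omega_{p-1}(\vec{\mathcal{H}};\mathbb{R})\to\Omega_{p}(\vec{\mathcal{H}};\mathbb{R})$ was defined precisely by the adjunction property.

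First, to verify that $\Delta^{\vec{\mathcal{H}}}_{p}$ is self-adjoint, I would take arbitrary $x,y\in\Omega_{p}(\vec{\mathcal{H}};\mathbb{R})$ and compute
\begin{equation*}
\langle \Delta^{\vec{\mathcal{H}}}_{p}x,y\rangle
=\langle (d_{p})^{\ast}d_{p}x,y\rangle+\langle d_{p+1}(d_{p+1})^{\ast}x,y\rangle
=\langle d_{p}x,d_{p}y\rangle+\langle (d_{p+1})^{\ast}x,(d_{p+1})^{\ast}y\rangle,
\end{equation*}
and then push the inner products back through the adjunction in the opposite direction to obtain $\langle x,\Delta^{\vec{\mathcal{H}}}_{p}y\rangle$. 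Since both summands $(d_{p})^{\ast}d_{p}$ and $d_{p+1}(d_{p+1})^{\ast}$ are individually of the form $T^{\ast}T$, self-adjointness is immediate once one checks that the adjoint of $(d_{p+1})^{\ast}$ is $d_{p+1}$, which follows from the general fact $(T^{\ast})^{\ast}=T$ on finite-dimensional inner product spaces.

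Second, non-negative definiteness will follow from plugging $y=x$ into the symmetric expression above, giving
\begin{equation*}
\langle \Delta^{\vec{\mathcal{H}}}_{p}x,x\rangle
=\langle d_{p}x,d_{p}x\rangle+\langle (d_{p+1})^{\ast}x,(d_{p+1})^{\ast}x\rangle
=\|d_{p}x\|^{2}+\|(d_{p+1})^{\ast}x\|^{2}\geq 0.
\end{equation*}
There is essentially no obstacle here: all of the nontrivial content sits in the existence and well-definedness of the operators $d_{p}$ and $(d_{p+1})^{\ast}$ on the infimum subcomplex $\Omega_{\ast}(\vec{\mathcal{H}};\mathbb{R})$, which has already been established. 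The main point to be careful about is simply that the adjoint $(d_{p})^{\ast}$ is taken with respect to the inner product restricted to $\Omega_{p-1}(\vec{\mathcal{H}};\mathbb{R})$, not the ambient one on $S_{p-1}(V;\mathbb{R})$; once this is kept straight, the proof is a two-line computation parallel to the hypergraph case.
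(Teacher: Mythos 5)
Your proposal is correct and follows essentially the same argument the paper gives for the hypergraph case (which the paper explicitly invokes here by analogy): expand $\langle \Delta^{\vec{\mathcal{H}}}_{p}x,y\rangle$ via the adjunction into the symmetric expression $\langle d_{p}x,d_{p}y\rangle+\langle (d_{p+1})^{\ast}x,(d_{p+1})^{\ast}y\rangle$, read off self-adjointness, and set $y=x$ for non-negativity. Your closing remark that the adjoint must be taken with respect to the inner product on $\Omega_{\ast}(\vec{\mathcal{H}};\mathbb{R})$ rather than the ambient space is a correct and worthwhile precision, consistent with how the paper sets things up.
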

  \begin{proposition}\label{proposition:decomposition1}
  $\Omega_{p}(\vec{\mathcal{H}};\mathbb{R})=\ker \Delta^{\vec{\mathcal{H}}}_{p}\oplus \mathrm{Im}d_{p}\oplus\mathrm{Im}(d_{p+1})^{\ast}$. Moreover, we have $\ker \Delta^{\vec{\mathcal{H}}}_{p}=\ker d_{p}\cap \ker (d_{p+1})^{\ast}\cong H_{p}(\vec{\mathcal{H}};\mathbb{R})$.
  \end{proposition}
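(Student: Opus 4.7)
The plan is to invoke the standard algebraic Hodge decomposition, exactly as the paper already did in the hypergraph setting in Section \ref{section:hypergraph}. The only hypotheses required are that $(\Omega_\ast(\vec{\mathcal{H}};\mathbb{R}), d_\ast)$ is a finite-dimensional chain complex equipped with an inner product making each $d_p$ admit a formal adjoint $(d_p)^\ast$. Both are already in place: $\Omega_\ast$ is a genuine subcomplex of $F_\ast(\vec{\mathcal{H}};\mathbb{R})$ and carries the restricted inner product, and $\Delta^{\vec{\mathcal{H}}}_p$ is defined by the same algebraic pattern as $\Delta^{\mathcal{H}}_p$.

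First, I would establish the identity $\ker \Delta^{\vec{\mathcal{H}}}_p = \ker d_p \cap \ker (d_{p+1})^\ast$. The inclusion $\supseteq$ is immediate from the definition. For the reverse inclusion I pair $\Delta^{\vec{\mathcal{H}}}_p x$ with $x$ to obtain
\begin{equation*}
\langle \Delta^{\vec{\mathcal{H}}}_p x, x \rangle = \|d_p x\|^2 + \|(d_{p+1})^\ast x\|^2,
\end{equation*}
so $\Delta^{\vec{\mathcal{H}}}_p x = 0$ forces both $d_p x = 0$ and $(d_{p+1})^\ast x = 0$.

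Next, I would verify pairwise orthogonality of the three candidate summands sitting in $\Omega_p$, namely $\ker \Delta^{\vec{\mathcal{H}}}_p$, $\mathrm{Im}\, d_{p+1}$, and $\mathrm{Im}\, (d_p)^\ast$. Using $d_p \circ d_{p+1} = 0$ I get
\begin{equation*}
\langle d_{p+1} y, (d_p)^\ast w \rangle = \langle d_p d_{p+1} y, w \rangle = 0,
\end{equation*}
so $\mathrm{Im}\, d_{p+1} \perp \mathrm{Im}\, (d_p)^\ast$. Orthogonality of $\ker \Delta^{\vec{\mathcal{H}}}_p$ with each image piece follows from the first step together with the standard finite-dimensional identities $(\ker d_p)^\perp = \mathrm{Im}\, (d_p)^\ast$ and $(\ker (d_{p+1})^\ast)^\perp = \mathrm{Im}\, d_{p+1}$. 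The three-term decomposition is then assembled by first writing $\Omega_p = \ker d_p \oplus \mathrm{Im}\, (d_p)^\ast$ and then refining $\ker d_p = \ker \Delta^{\vec{\mathcal{H}}}_p \oplus \mathrm{Im}\, d_{p+1}$, using that $\mathrm{Im}\, d_{p+1} \subseteq \ker d_p$ and that the orthogonal complement of $\mathrm{Im}\, d_{p+1}$ inside $\ker d_p$ is precisely $\ker d_p \cap \ker (d_{p+1})^\ast$. The isomorphism $\ker \Delta^{\vec{\mathcal{H}}}_p \cong H_p(\vec{\mathcal{H}};\mathbb{R})$ then drops out of $H_p(\vec{\mathcal{H}};\mathbb{R}) = \ker d_p / \mathrm{Im}\, d_{p+1}$, with the harmonic subspace providing canonical representatives.

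I do not anticipate any real obstacle; the argument is a verbatim transcription of the classical algebraic Hodge theorem, whose hypergraph analogue was already invoked without proof earlier in the paper. The only point that deserves a quick sanity check is that $(d_p)^\ast$, initially defined on the subspace $\Omega_{p-1}$, genuinely maps back into $\Omega_p$ and not merely into the ambient $F_p(\vec{\mathcal{H}};\mathbb{R})$; this is automatic from $\Omega_\ast$ being a subcomplex carrying the restricted inner product, so the adjoint produced by the Riesz representation theorem lives inside $\Omega_p$.
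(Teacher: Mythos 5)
Your proof is correct and follows exactly the route the paper intends: the paper simply invokes the algebraic Hodge decomposition theorem (citing the hypergraph discussion in Section \ref{section:hypergraph}), and your argument is the standard proof of that theorem, with the key identity $\langle \Delta^{\vec{\mathcal{H}}}_p x,x\rangle=\|d_px\|^2+\|(d_{p+1})^\ast x\|^2$ and the orthogonality relations filled in. Note that you have (correctly) worked with $\mathrm{Im}\,d_{p+1}$ and $\mathrm{Im}\,(d_p)^\ast$, which are the summands actually contained in $\Omega_p(\vec{\mathcal{H}};\mathbb{R})$; the indices in the statement as printed are a typo.
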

  This above proposition shows that the number of zero eigenvalues of $\Delta^{\vec{\mathcal{H}}}_{p}$ equals to the $p$-th Betti number of $\vec{\mathcal{H}}$.
  \begin{theorem}
  Let $\vec{\mathcal{H}}=(V,\vec{E})$ be a hyperdigraph. Let $\lambda_{0}(k)$ be the $k$-th smallest Laplacian eigenvalue of $\Delta^{\vec{\mathcal{H}}}_{0}$. If $\vec{E}_{0}\neq \emptyset$, then
  \begin{enumerate}
    \item[$(i)$] $\lambda_{0}(1)=0$;
    \item[$(ii)$] $\lambda_{0}(2)>0$ if and only if for any directed $0$-hyperedges $(v),(w)$ of $\vec{\mathcal{H}}$, there is a path of $1$-hyperedges from $(v)$ to $(w)$.
  \end{enumerate}
  \end{theorem}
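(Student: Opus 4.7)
My plan is to exploit Proposition \ref{proposition:decomposition1}, which identifies $\ker \Delta^{\vec{\mathcal{H}}}_{0}\cong H_{0}(\vec{\mathcal{H}};\mathbb{R})$: the multiplicity of the eigenvalue $0$ in the spectrum of $\Delta^{\vec{\mathcal{H}}}_{0}$ equals $\beta_{0}$, so $\lambda_{0}(1)=0$ is equivalent to $\beta_{0}\ge 1$ and $\lambda_{0}(2)>0$ is equivalent to $\beta_{0}=1$. Both parts therefore reduce to computing $\beta_{0}=\dim \Omega_{0}(\vec{\mathcal{H}};\mathbb{R})-\dim \mathrm{Im}\, d_{1}$ from the combinatorial structure of $\vec{\mathcal{H}}$. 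For (i), the convention $d_{0}=0$ gives $\Omega_{0}(\vec{\mathcal{H}};\mathbb{R})=F_{0}(\vec{\mathcal{H}};\mathbb{R})=\mathbb{R}[\vec{E}_{0}]\ne 0$, and I would witness a nontrivial class by taking $\mathbf{1}=\sum_{(v)\in \vec{E}_{0}} v$: each elementary boundary $v_{1}-v_{0}$ of a directed $1$-hyperedge has zero total coefficient, so $\langle d_{1}x,\mathbf{1}\rangle = 0$ for every $x\in \Omega_{1}(\vec{\mathcal{H}};\mathbb{R})$. This places $\mathbf{1}$ orthogonal to $\mathrm{Im}\, d_{1}$, hence $[\mathbf{1}]\ne 0$ in $H_{0}$ and $\lambda_{0}(1)=0$.

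For (ii), I would attach to $\vec{\mathcal{H}}$ the underlying undirected multigraph $G$ whose vertex set consists of $\vec{E}_{0}$ together with the endpoints of all $1$-hyperedges, and whose edges are the unoriented images of the elements of $\vec{E}_{1}$. A ``path of $1$-hyperedges from $(v)$ to $(w)$'' in the theorem is then precisely a walk in $G$; the associated signed telescoping chain lies in $F_{1}$, has boundary $w-v\in F_{0}$, therefore belongs to $\Omega_{1}$, and witnesses $w-v\in \mathrm{Im}\, d_{1}$. For the converse, I would decompose any $x\in\Omega_{1}$ by the connected components $\{C\}$ of $G$ as $x=\sum_{C}x_{C}$; since the boundaries of different $x_{C}$'s occupy disjoint subspaces of $S_{0}(V;\mathbb{R})$, the constraint $d_{1}x\in F_{0}$ forces each $d_{1}x_{C}$ to be supported on $\vec{E}_{0}\cap C$ and to have zero total coefficient. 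This exhibits $\mathrm{Im}\, d_{1}$ as a direct sum of ``zero-sum'' subspaces of $\mathbb{R}[\vec{E}_{0}\cap C]$, yielding $\dim \mathrm{Im}\, d_{1}=|\vec{E}_{0}|-n$, where $n$ is the number of components $C$ of $G$ meeting $\vec{E}_{0}$. Hence $\beta_{0}=n$, and $\beta_{0}=1$ is equivalent to all directed $0$-hyperedges lying in a single component of $G$, which is the stated path-connectivity condition.

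The delicate point will be the surjectivity half of the claim on $\mathrm{Im}\, d_{1}$ inside each component: namely, that every zero-sum vector supported on $\vec{E}_{0}\cap C$ is actually realized as $d_{1}$ of some chain in $\Omega_{1}$, even when the only walks in $C$ connecting two $0$-hyperedges must traverse intermediate vertices that are not themselves directed $0$-hyperedges. The telescoping cancellation of those intermediate coefficients is precisely what keeps the chain inside the infimum complex $\Omega_{1}$, and this is the place where the interplay between the embedded-homology constraint defining $\Omega_{1}$ and ordinary graph connectivity must be handled carefully.
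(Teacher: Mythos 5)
Your argument is correct, and for part (ii) it takes a genuinely different---and more complete---route than the paper's. For (i) the two proofs are the same idea in dual form: the paper introduces the augmentation $\tilde d_0\bigl(\sum_j a_j(v_j)\bigr)=\sum_j a_j$ and notes $\mathrm{Im}\, d_1\subseteq \ker\tilde d_0\subsetneq \Omega_0$, while your vector $\mathbf{1}$ is precisely the Riesz representative of that functional; the one point worth writing out is the observation that $d_1x\in F_0$ kills all coefficients on vertices outside $\vec{E}_0$, so the $\vec{E}_0$-sum coincides with the full augmentation and vanishes. For (ii) the paper's printed proof establishes only one implication: assuming $\lambda_0(2)>0$ it gets $(v)-(w)\in\mathrm{Im}\, d_1$ and derives a contradiction from the $\{0,1\}$-valued reachability indicator $f$ (which is constant on $1$-hyperedges only under the undirected reading of ``path''---the same reading you adopt, and the one that makes the statement true); the converse implication is not addressed there. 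Your component decomposition supplies it: identifying $\mathrm{Im}\, d_1$ with the direct sum, over components $C$ of the underlying undirected multigraph, of the zero-sum subspaces of $\mathbb{R}[\vec{E}_0\cap C]$ gives $\beta_0=n$, the number of components meeting $\vec{E}_0$, from which both directions of the equivalence fall out at once. The ``delicate point'' you flag is handled correctly on both sides: a telescoping chain along a walk between two directed $0$-hyperedges has boundary $(w)-(v)\in F_0$ even when intermediate vertices are not $0$-hyperedges, so it lies in $\Omega_1$; conversely, vertex-disjointness of components forces each $d_1x_C$ to be supported on $\vec{E}_0\cap C$ with zero coefficient sum. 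So your proof recovers the paper's direction and closes the gap in the other.
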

  \begin{proof}
  $(i)$  Since there exists at least one directed $0$-hyperedge in $\vec{E}$, we have $\Omega_{0}(\vec{\mathcal{H}};\mathbb{R})=F_{0}(\vec{\mathcal{H}};\mathbb{R})$. Moreover, the representation matrix of $\Delta^{\vec{\mathcal{H}}}_{0}:\Omega_{0}(\vec{\mathcal{H}};\mathbb{R})\to \Omega_{0}(\vec{\mathcal{H}};\mathbb{R})$ is an $r\times r$ matrix. Here, $r=\dim \Omega_{0}(\vec{\mathcal{H}};\mathbb{R})\geq 1$. Consider the augmented chain complex
  \begin{equation*}
   \cdots\to \Omega_{2}(\vec{\mathcal{H}};\mathbb{R})\stackrel{d_{2}}{\to} \Omega_{1}(\vec{\mathcal{H}};\mathbb{R})\stackrel{d_{1}}{\to} \Omega_{0}(\vec{\mathcal{H}};\mathbb{R})\stackrel{\tilde{d}_{0}}{\to} \mathbb{K}\to 0,
  \end{equation*}
  where $\tilde{d}(\sum\limits_{j=1}^{m}a_{j}\cdot(v_{j}))=\sum\limits_{j=1}^{m}a_{j}$ for $\sum\limits_{j=1}^{m}a_{j}\cdot(v_{j})\in \Omega_{0}(\vec{\mathcal{H}};\mathbb{R})$. It follows that
  \begin{equation*}
    \im d_{1}\subseteq \ker \tilde{d}_{0}.
  \end{equation*}
  Choose a directed $0$-hyperedge $(v)\in \vec{E}$. Observing that $\ker \tilde{d}_{0}(v)\neq 0$. One has $\im d_{1}\subsetneq \Omega_{0}(\vec{\mathcal{H}};\mathbb{R})$. It follows that
  \begin{equation*}
    H_{0}(\vec{\mathcal{H}};\mathbb{R})\neq 0.
  \end{equation*}
  By Proposition \ref{proposition:decomposition1}, $\dim\ker\Delta^{\vec{\mathcal{H}}}_{0}=\dim H_{0}(\vec{\mathcal{H}};\mathbb{R})>0$. It follows that $\lambda_{0}(1)=0$.

  $(ii)$ If $\lambda_{0}(2)>0$, one has $\dim H_{0}(\vec{\mathcal{H}};\mathbb{R})=\dim\ker\Delta^{\vec{\mathcal{H}}}_{0}=1$. So for any $(v),(w)\in \vec{E}^{0}$, we have $(v)-(w)\in \im d_{1}$. Suppose there is no path of $1$-hyperedges from $(v)$ to $(w)$. Let $(v)-(w)=d_{1}\sum\limits_{j=1}^{k}a_{i}(v_{j}w_{j})$. Consider the function $f:\Omega_{0}(\vec{\mathcal{H}};\mathbb{R})\to \mathbb{R}$ given by
  \begin{equation*}
    f((x))=\left\{
             \begin{array}{ll}
               0, & \hbox{if there is a path from $(v)$ to $(x)$;} \\
               1, & \hbox{otherwise.}
             \end{array}
           \right.
  \end{equation*}
  Then we have
  \begin{equation*}
  -1=f((v)-(w)))=\sum\limits_{j=1}^{k}a_{i}(f(w_{j})-f(v_{j})).
  \end{equation*}
  Note that $f(w_{j})=f(v_{j})$ since $(v_{j}w_{j})$ is a directed $1$-hyperedge. One has $\sum\limits_{j=1}^{k}a_{i}(f(w_{j})-f(v_{j}))=0$, contradiction. So there is a path of $1$-hyperedges from $(v)$ to $(w)$.
  \end{proof}

  \begin{example}
  Let $\vec{\mathcal{H}}=(V,\vec{E})$ be a hyperdigraph with $V=\{1,2,3\}$ and
  \begin{equation*}
    \vec{E}=\{(1),(2),(1,2),(1,3),(3,2),(1,2,3)\}.
  \end{equation*}
  Then we have
  \begin{eqnarray*}
    F_{0}(\vec{\mathcal{H}};\mathbb{R}) &=& \mathrm{span}\{(1),(2)\}, \\
    F_{1}(\vec{\mathcal{H}};\mathbb{R}) &=& \mathrm{span}\{(1,2),(1,3),(3,2)\}, \\
    F_{2}(\vec{\mathcal{H}};\mathbb{R}) &=& \mathrm{span}\{(1,2,3)\}.
  \end{eqnarray*}
  Note that $d(1,2,3)=(1,2)-(1,3)+(2,3)\notin F_{1}(\vec{\mathcal{H}};\mathbb{R})$. By Eq. (\ref{equation:omega_chaincomplex}), we obatin
  \begin{eqnarray*}
    \Omega_{0}(\vec{\mathcal{H}};\mathbb{R}) &=& \mathrm{span}\{(1),(2)\}, \\
    \Omega_{1}(\vec{\mathcal{H}};\mathbb{R}) &=& \mathrm{span}\{(1,2),(1,3)+(3,2)\}, \\
    \Omega_{2}(\vec{\mathcal{H}};\mathbb{R}) &=& 0.
  \end{eqnarray*}
  Now, we choose the standard orthogonal basis of $\Omega_{\ast}(\vec{\mathcal{H}};\mathbb{R})$ as $(1),(2),(1,2),\frac{(1,3)+(3,2)}{\sqrt{2}}$. Then the boundary matrices are given by
  \begin{eqnarray*}
    d_0\left(          \begin{array}{c}
                       (1) \\
                       (2) \\
                     \end{array}
                     \right)&=&\left(
                       \begin{array}{cc}
                        0\\ 0\\
                       \end{array}
                     \right), \\
    d_{1}\left(
           \begin{array}{c}
             (1,2) \\
             \frac{(1,3)+(3,2)}{\sqrt{2}} \\
           \end{array}
         \right)&=&\left(
                   \begin{array}{cc}
                     -1 & 1 \\
                     -\frac{1}{\sqrt{2}} & \frac{1}{\sqrt{2}} \\
                   \end{array}
                 \right)\left(
           \begin{array}{c}
             (1) \\
             (2) \\
           \end{array}
         \right) .
  \end{eqnarray*}
  Note that $\mathrm{rank} \left(
                   \begin{array}{cc}
                     -1 & 1 \\
                     -\frac{1}{\sqrt{2}} & \frac{1}{\sqrt{2}} \\
                   \end{array}
                 \right)=1$.
  We have the hyperdigraph homology $H_{p}(\vec{\mathcal{H}};\mathbb{R})=\left\{
                                                                                 \begin{array}{ll}
                                                                                   \mathbb{K}, & \hbox{$p=0,1$;} \\
                                                                                   0, & \hbox{otherwise.}
                                                                                 \end{array}
                                                                               \right.
  $
  Moreover, the representation matrix of $\Delta^{\vec{\mathcal{H}}}_{p}$ with respect to the chosen basis is given by
  \begin{equation*}
    L^{\vec{\mathcal{H}}}_{0}= \left(
                   \begin{array}{cc}
                     -1 & 1 \\
                     -\frac{1}{\sqrt{2}} & \frac{1}{\sqrt{2}} \\
                   \end{array}
                 \right)^{T}\left(
                   \begin{array}{cc}
                     -1 & 1 \\
                     -\frac{1}{\sqrt{2}} & \frac{1}{\sqrt{2}} \\
                   \end{array}
                 \right)= \left(
                   \begin{array}{cc}
                     \frac{3}{2} & -\frac{3}{2} \\
                     -\frac{3}{2} & \frac{3}{2} \\
                   \end{array}
                 \right)
  \end{equation*}
  and
  \begin{equation*}
    L^{\vec{\mathcal{H}}}_{1}= \left(
                   \begin{array}{cc}
                     -1 & 1 \\
                     -\frac{1}{\sqrt{2}} & \frac{1}{\sqrt{2}} \\
                   \end{array}
                 \right)\left(
                   \begin{array}{cc}
                     -1 & 1 \\
                     -\frac{1}{\sqrt{2}} & \frac{1}{\sqrt{2}} \\
                   \end{array}
                 \right)^{T}= \left(
                   \begin{array}{cc}
                     2 & \sqrt{2} \\
                     \sqrt{2} & 1 \\
                   \end{array}
                 \right).
  \end{equation*}
  The spectrum of $\Delta_{0}^{\vec{\mathcal{H}}}$ is $\{0,3\}$, the number of zero eigenvalues is exactly $1$. Similarly, the spectrum of $\Delta_{1}^{\vec{\mathcal{H}}}$ is $\{0,3\}$, which is consistent with the Betti number. Note that if we choose the standard orthogonal basis of $\Omega_{\ast}(\vec{\mathcal{H}};\mathbb{R})$ as $\frac{(1)+(2)}{\sqrt{2}},\frac{(1)-(2)}{\sqrt{2}},(1,2),\frac{(1,3)+(3,2)}{\sqrt{2}}$, one can obtain the corresponding representation matrices of Laplacians as
  \begin{equation*}
    \tilde{L}^{\vec{\mathcal{H}}}_{0}= \left(
                           \begin{array}{cc}
                             0 & 0 \\
                             0 & 3 \\
                           \end{array}
                         \right),\quad   \tilde{L}^{\vec{\mathcal{H}}}_{1}= \left(
                           \begin{array}{cc}
                             2 & \sqrt{2} \\
                             \sqrt{2} & 1 \\
                           \end{array}
                         \right).
  \end{equation*}
  In fact, the eigenvalues of the above matrices coincides with the previous ones.
  \end{example}

  \begin{figure}[!ht]
    \centering
    \includegraphics[width=14cm]{./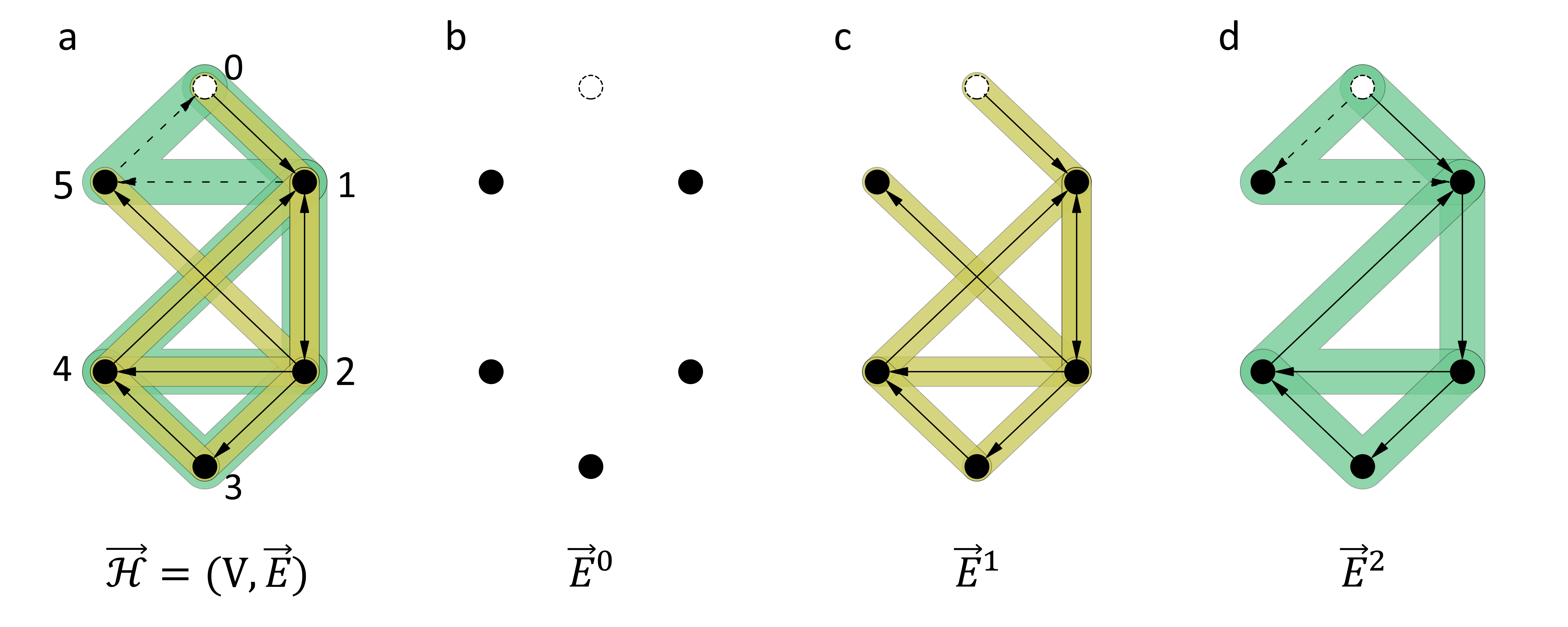}
    \caption{
      {\bf a} Illustration of hyperdigraph $\vec{\mathcal{H}}$ in Example \ref{example:hyperdigraph2}.
      {\bf b}, {\bf c}, and {\bf d} Illustration of directed 0-hyperedges, directed 1-hyperedges, and directed 2-hyperedges, respectively. The solid black vertices indicate 0-hyperedges and the dashed hollow circle indicates a vertex that is not a directed 0-hyperedge. The yellow background combined with directed edges represents directed 1-hyperedges. The green background combined with 2 consecutive directed edges represents directed 2-hyperedges. Here, the dashed arrows indicate the directed edges that do not exist.}
    \label{figure:hyperdigraph}
  \end{figure}

  \begin{example}\label{example:hyperdigraph2}
    As shown in Figure \ref{figure:hyperdigraph}{\bf a}, let $\mathcal{\vec{H}}=(V,\vec{E})$ be a hyperdigraph with vertex set $V=\{0, 1, 2, 3, 4, 5\}$ and the directed hyperedge set $\vec{E}$, where $\vec{E}$ contains the directed 0-hyperedges (Figure \ref{figure:hyperdigraph}{\bf b}), directed 1-hyperedges (Figure \ref{figure:hyperdigraph}{\bf c}), and the directed 2-hyperedges (Figure \ref{figure:hyperdigraph}{\bf d}) as follows,
    \begin{eqnarray*}
      \vec{E}^0&=&\{(1), (2), (3), (4), (5)\}, \\
      \vec{E}^1&=&\{(0, 1), (1, 2), (2, 1), (2, 3),  (2, 4), (2, 5),  (3, 4), (4, 1)  \}, \\
      \vec{E}^2&=&\{(0, 1, 2), (0, 5, 1),  (2, 3, 4), (2, 4, 1)\}.
    \end{eqnarray*}
    So the standard orthogonal basis of $\Omega_{\ast}(\vec{\mathcal{H}};\mathbb{R})$ can be chosen as follows,
    \begin{equation*}
      (1), (2), (3), (4), (5), (1, 2), (2, 1), (2, 3), (2, 4), (2, 5), (3, 4), (4, 1), (2, 3, 4), (2, 4, 1).
    \end{equation*}
    Then, the representation matrices of the boundary operators $d_0$,$d_1$, and $d_2$ are given as follows,
    \begin{eqnarray*}
      d_0\left(          \begin{array}{c}
                       (1) \\
                       (2) \\
                       (3) \\
                       (4) \\
                       (5) \\
                     \end{array}
                     \right)&=&\left(
                       \begin{array}{cc}
                        0\\ 0\\ 0\\ 0\\ 0 \\
                       \end{array}
                     \right),\\
      d_{1}\left(
               \begin{array}{c}
                (1, 2)\\(2, 1)\\(2, 3)\\(2, 4)\\(2, 5)\\(3, 4)\\(4, 1)\\
               \end{array}
             \right) &=& \left(
                       \begin{array}{ccccccc}
                       -1& 1& 0& 0&0\\
                        1& 1& 0& 0&0\\
                        0&-1& 1& 0&0\\
                        0&-1& 0& 1&0\\
                        0&-1& 0& 0&1\\
                        0& 0&-1& 1&0\\
                        1& 0& 0&-1&0\\
                       \end{array}
                     \right)\left(
               \begin{array}{c}
               (1)\\(2)\\(3)\\(4)\\(5)\\
               \end{array}
             \right), \\
      d_{2}\left(
        \begin{array}{c}
          (2, 3, 4)\\(2, 4, 1)\\
        \end{array}
      \right)&=&\left(
        \begin{array}{ccccccc}
        0&0&1&-1&0&1&0\\
        0&-1&0&1&0&0&1\\
        \end{array}
      \right)\left(
        \begin{array}{c}
          (1, 2)\\(2, 1)\\(2, 3)\\(2, 4)\\(2, 5)\\(3, 4)\\(4, 1)\\
        \end{array}
      \right).
    \end{eqnarray*}
    The representation matrices of the Laplacian $\Delta^{\vec{\mathcal{H}}}_{p}$ with respect to the chosen basis are listed as follows,
    \begin{eqnarray*}
      L_{0}^{\mathcal{\vec{H}}}&=&\left(
                     \begin{array}{ccccc}
                       3&-2& 0&-1& 0\\
                      -2& 5&-1&-1&-1\\
                       0&-1& 2&-1& 0\\
                      -1&-1&-1& 3& 0\\
                       0&-1& 0& 0& 1\\
                     \end{array}
                   \right), \\
      L_{1}^{\mathcal{\vec{H}}}&=&\left(
                    \begin{array}{ccccccc}
                       2& -2& -1& -1& -1& 0&-1\\
                      -2&  3&  1&  0&  1& 0& 0\\
                      -1&  1&  3&  0&  1& 0& 0\\
                      -1&  0&  0&  4&  1& 0& 0\\
                      -1&  1&  1&  1&  2& 0& 0\\
                       0&  0&  0&  0&  0& 3&-1\\
                      -1&  0&  0&  0&  0&-1& 3\\
                    \end{array}
                  \right), \\
      L_{2}^{\mathcal{\vec{H}}}&=&\left(
                    \begin{array}{cc}
                      3& -1\\
                      -1& 3\\
                    \end{array}
                  \right).
    \end{eqnarray*}

  Then the spectra of the Laplacian matrices can be generated, which are $\mathbf{Spec}(L_{0}^{\mathcal{\vec{H}}})=\{0, 1.044$, $2.332, 4.080, 6.544\}$, $\mathbf{Spec}(L_{1}^{\mathcal{\vec{H}}})=\{0, 1.044, 2, 2.332, 4, 4.080, 6.544\}$, and $\mathbf{Spec}(L_{2}^{\mathcal{\vec{H}}})=\{2,4\}$. The corresponding Betti numbers are $\beta_0=1,\beta_1=1, $ and $ \beta_2=0$, and the smallest eigenvalues of the non-harmonic spectra are $\lambda_0=1.044,\lambda_1=1.044$, and $\lambda_2=2$, respectively.

  \end{example}

  There are various homologies and topological Laplacians on different objects. We list these objects, which are a family of discrete objects satisfying certain properties, and the corresponding topological Laplacians in Table \ref{table:graph_Lap}.
  Fix a nonempty finite ordered set $V$. Recall that $\mathbf{S}(V)$ is the set of all sequence with distinct elements in $V$. Let $\mathbf{\tilde{S}}(V)$ be the set of all sequence with elements in $V$.
  For a subset $K$ of $\mathbf{P}(V)$, let $\partial_{\ast}K=\{\tau\subset \sigma|\sigma\in K\}$. For a subset $P$ in $\mathbf{\tilde{S}}(V)$, let $\partial_{0}P=\{(x_{1},\dots,x_{p})|(x_{0},x_{1},\dots,x_{p})\in P\}$ and $\partial_{\infty}P=\{(x_{0},\dots,x_{p-1})|(x_{0},x_{1},\dots,x_{p})\in P\}$.
  A graph is a vertex set $V$ equipped with a collection of $1$-dimensional edges, that is, a subset of $\mathbf{P}_{2}(V)$. A simplicial complex can be regarded as a vertex set equipped with a subset $K$ of $\mathbf{P}(V)$ satisfying $\partial_{\ast}K\subseteq K$. A digraph is a vertex set $V$ equipped with an edge set $E$ satisfying $E\subseteq \tilde{\mathbf{S}}_{2}(V)$. A path complex can be thought as a vertex set $V$ equipped with a collection $P$ satisfying $\partial_{0}P,\partial_{\infty}P\subseteq P$. The hypergraph homology and hyperdigraph homology can be regarded as a vertex set $V$ equipped with the corresponding edge sets $E\subseteq \mathbf{P}(V)$ and $\vec{E}\subseteq \mathbf{S}(V)$, respectively.
  \begin{table}[h]
    \centering
    \caption{Topological Laplacians on different objects}\label{table:graph_Lap}
    \begin{tabular}{c|c|c|c}
      \hline
      Homologies & notation & restriction & topological Laplacians \\
    \hline
      graph & $G=(V,E)$, $E\subseteq \mathbf{P}_{2}(V)$ & none& Laplacian \\
      simplicial complex & $\mathcal{K}=(V,K)$, $K\subseteq \mathbf{P}(V)$ & $\partial_{\ast}K\subseteq K$& Laplacian  \\
      digraph & $G=(V,E)$, $E\subseteq \mathbf{S}_{2}(V)$ & none & path Laplacian  \\
      path complex & $\mathcal{P}=(V,P)$, $P\subseteq \mathbf{\tilde{S}}(V)$& $\partial_{0}P,\partial_{\infty}P\subseteq P$ & path Laplacian  \\
      hypergraph & $\mathcal{H}=(V,E)$, $E\subseteq \mathbf{P}(V)$ & none& hypergraph Laplacian  \\
      hyperdigraph & $\vec{\mathcal{H}}=(V,\vec{E})$,  $\vec{E}\subseteq \mathbf{S}(V)$ & none & hyperdigraph Laplacian  \\
      \hline
    \end{tabular}
  \end{table}

\subsection{The calculations of topological hyperdigraph Laplacians}
  In this section, we provide more examples to illustrate topological hyperdigraph Laplacians. Our algorithm is based on the following calculation process. If there is no ambiguity, we will denote the sequence $(v_{0},v_{1},\dots,v_{p})$ by $(v_{0}v_{1}\cdots v_{p})$ for convenience.

  \begin{example}
  Let $\vec{\mathcal{H}}=(V,\vec{E})$ be a hyperdigraph with $V=\{1,2,3,4,5\}$ and
  \begin{equation*}
    \vec{E}=\{(12),(25),(13),(35),(14),(45),(125),(135),(145)\}.
  \end{equation*}
  By a direct calculation, we have
  \begin{eqnarray*}
    \Omega_{0}(\vec{\mathcal{H}};\mathbb{R}) &=& \mathrm{span}\{(1),(2),(3),(4),(5)\}, \\
    \Omega_{1}(\vec{\mathcal{H}};\mathbb{R}) &=& \mathrm{span}\{(12),(25),(13),(35),(14),(45)\}, \\
    \Omega_{2}(\vec{\mathcal{H}};\mathbb{R}) &=& \mathrm{span}\{(125)-(135),(125)-(145)\}.
  \end{eqnarray*}
  We choose the standard orthogonal basis of $\Omega_{\ast}(\vec{\mathcal{H}};\mathbb{R})$ as
  \begin{equation*}
    (1),(2),(3),(4),(5),(12),(25),(13),(35),(14),(45),(x),(y).
  \end{equation*}
  Here, $(x)=\frac{1}{\sqrt{2}}(125)-\frac{1}{\sqrt{2}}(135)$ and $(y)=-\frac{1}{\sqrt{6}}(125)-\frac{1}{\sqrt{6}}(135)+\frac{2}{\sqrt{6}}(145)$.
  Then the boundary matrices are given by
  \begin{equation*}
    d_0\left(          \begin{array}{c}
                       (1) \\
                       (2) \\
                       (3) \\
                       (4) \\
                       (5) \\
                     \end{array}
                     \right)=\left(
                       \begin{array}{cc}
                        0\\ 0\\ 0\\ 0\\ 0 \\
                       \end{array}
                     \right),\quad d_{1} \left(\begin{array}{c}
    (12) \\
    (25) \\
    (13) \\
    (35) \\
    (14) \\
    (45) \\
    \end{array}
    \right)=\left(
              \begin{array}{ccccc}
                -1 & 1 & 0 & 0 & 0 \\
                0 & -1 & 0 & 0 & 1 \\
                -1 & 0 & 1 & 0 & 0 \\
                0 & 0 & -1 & 0 & 1 \\
                -1 & 0 & 0 & 1 & 0 \\
                0 & 0 & 0 & -1 & 1 \\
              \end{array}
            \right)\left(
                     \begin{array}{c}
                       (1) \\
                       (2) \\
                       (3) \\
                       (4) \\
                       (5) \\
                     \end{array}
                   \right),
  \end{equation*}
  \begin{equation*}
    d_{2}\left(
           \begin{array}{c}
             (x) \\
             (y) \\
           \end{array}
         \right)=\left(
                   \begin{array}{cccccc}
                     \frac{1}{\sqrt{2}} & \frac{1}{\sqrt{2}}  & -\frac{1}{\sqrt{2}} & -\frac{1}{\sqrt{2}}& 0 & 0 \\
                     -\frac{1}{\sqrt{6}} & -\frac{1}{\sqrt{6}} & -\frac{1}{\sqrt{6}} & -\frac{1}{\sqrt{6}} & \frac{2}{\sqrt{6}} & \frac{2}{\sqrt{6}}
                      \\
                   \end{array}
                 \right)\left(
                                                                                                                                                      \begin{array}{c}
                                                                                                                                                        (12) \\
                                                                                                                                                        (25) \\
                                                                                                                                                        (13) \\
                                                                                                                                                        (35) \\
                                                                                                                                                        (14) \\
                                                                                                                                                        (45) \\
                                                                                                                                                      \end{array}
                                                                                                                                                    \right).
  \end{equation*}
  Hence, the representation matrices of $\Delta_{0}^{\vec{\mathcal{H}}},\Delta_{1}^{\vec{\mathcal{H}}}, $and $ \Delta_{2}^{\vec{\mathcal{H}}}$ are
  \begin{equation*}
    L_{0}^{\vec{\mathcal{H}}}=\left(
                                \begin{array}{ccccc}
                                  3 & -1 & -1 & -1 & 0 \\
                                  -1 & 2 & 0 & 0 & -1 \\
                                  -1 & 0 & 2 & 0 & -1 \\
                                  -1 & 0 & 0 & 2 & -1 \\
                                  0 & -1 & -1 & -1 & 3 \\
                                \end{array}
                              \right),L_{1}^{\vec{\mathcal{H}}}=\left(
                                                                  \begin{array}{cccccc}
                                                                    \frac{8}{3} & -\frac{1}{3} & \frac{2}{3} & -\frac{1}{3} & \frac{2}{3} & -\frac{1}{3} \\
                                                                    -\frac{1}{3} & \frac{8}{3} & -\frac{1}{3} & \frac{2}{3} & -\frac{1}{3} & \frac{2}{3} \\
                                                                    \frac{2}{3} & -\frac{1}{3} & \frac{8}{3} & -\frac{1}{3} & \frac{2}{3} & -\frac{1}{3} \\
                                                                    -\frac{1}{3} & \frac{2}{3} & -\frac{1}{3} & \frac{8}{3} & -\frac{1}{3} & \frac{2}{3} \\
                                                                    \frac{2}{3} & -\frac{1}{3} & \frac{2}{3} & -\frac{1}{3} & \frac{8}{3} & -\frac{1}{3} \\
                                                                    -\frac{1}{3} & \frac{2}{3} & -\frac{1}{3} & \frac{2}{3} & -\frac{1}{3} & \frac{8}{3} \\
                                                                  \end{array}
                                                                \right)
  \end{equation*}
  and
  \begin{equation*}
    L_{2}^{\vec{\mathcal{H}}}=\left(
                                  \begin{array}{cc}
                                    2 & 0 \\
                                    0 & 2 \\
                                  \end{array}
                                \right).
  \end{equation*}
  The spectra of $\Delta_{0}^{\vec{\mathcal{H}}},\Delta_{1}^{\vec{\mathcal{H}}}, $and $ \Delta_{2}^{\vec{\mathcal{H}}}$ are $\{0,2,2,3,5\}$, $\{2,2,2,2,3,5\}$, and $\{2,2\}$, respectively.
  Note that the transpose matrix of the representation matrix of the boundary operator $d_{p}$ is the representation matrix of its adjoint operator $d_{p}^{\ast}$ with respect to the standard orthogonal basis. However, it is not always true if the chosen basis is not the standard orthogonal basis. For example, let us choose the basis as
  \begin{equation*}
    (1),(2),(3),(4),(5),(12),(25),(13),(35),(14),(45),(125)-(135),(125)-(145).
  \end{equation*}
  Then the corresponding boundary matrix of $d_{2}$ is
  \begin{equation*}
    \tilde{B}_{2}=\left(
                    \begin{array}{cccccc}
                      1 & 1 & -1 & -1 & 0 & 0 \\
                      1 & 1 & 0 & 0 & -1 & -1 \\
                    \end{array}
                  \right).
  \end{equation*}
  If $\tilde{B}_{2}^{T}$ is the representation of $d_{2}^{\ast}$, then we would have
  \begin{equation*}
    \langle d_{2}((125)-(135)),(12)\rangle=1\neq 3=\langle (125)-(135),d_{2}^{\ast}(12)\rangle.
  \end{equation*}
  This is a contradiction. Thus the transpose matrix of the representation matrix of the boundary operator $d_{p}$ is not always the representation matrix of its adjoint operator $d_{p}^{\ast}$ if the chosen basis is not the standard orthogonal basis.
  \end{example}

  \begin{example}\label{example:three_combination}
  Let $\vec{\mathcal{H}}=(V,\vec{E})$ be a hyperdigraph with $V=\{1,2,3\}$. Here, $\vec{E}$ is given by all the sequence of distinct elements in $V$.
  \begin{equation*}
    \vec{E}=\{(1),(2),(3),(12),(13),(21),(23),(31),(32),(123),(132),(213),(231),(312),(321)\}.
  \end{equation*}
  It follows that
  \begin{eqnarray*}
    \Omega_{0}(\vec{\mathcal{H}};\mathbb{R}) &=& \mathrm{span}\{(1),(2),(3)\}, \\
    \Omega_{1}(\vec{\mathcal{H}};\mathbb{R}) &=& \mathrm{span}\{(12),(13),(21),(23),(31),(32)\}, \\
    \Omega_{2}(\vec{\mathcal{H}};\mathbb{R}) &=& \mathrm{span}\{(123),(132),(213),(231),(312),(321)\}.
  \end{eqnarray*}
  We choose the standard orthogonal basis as
  \begin{equation*}
    (1),(2),(3),(12),(13),(21),(23),(31),(32),(123),(132),(213),(231),(312),(321).
  \end{equation*}
  Then the calculation is shown in Table \ref{table:three_combination}.
  \begin{table}[h]
  \caption{Illustration of hyperdigraph in Example \ref{example:three_combination}}\label{table:three_combination}
  \centering
  \begin{small}
  \begin{tabular}{c|c|c|c}
    \hline
    $n$ & $n=0$ & $n=1$ & $n=2$ \\
    \hline
    $B_{n+1}$ & $\left(\begin{array}{ccc}
                    -1 & 1 & 0 \\
                    -1 & 0 & 1 \\
                    1 & -1 & 0 \\
                    0 & -1 & 1 \\
                    1 & 0 & -1 \\
                    0 & 1 & -1 \\
                    \end{array}
                     \right)$ & $\left(
                                            \begin{array}{cccccc}
                                              1 & -1 & 0 & 1 & 0 & 0 \\
                                              -1 & 1 & 0 & 0 & 0 & 1 \\
                                              0 & 1 & 1 & -1 & 0 & 0 \\
                                              0 & 0 & -1 & 1 & 1 & 0 \\
                                              1 & 0 & 0 & 0 & 1 & -1 \\
                                              0 & 0 & 1 & 0 & -1 & 1 \\
                                            \end{array}
                                          \right)$
                     &  $6\times 0$ empty matrix \\
    \hline
    $L_{n}$ & $\left(
                \begin{array}{ccc}
                  4 & -2 & -2 \\
                  -2 & 4 & -2 \\
                  -2 & -2 & 4 \\
                \end{array}
              \right)$
     & $\left(
          \begin{array}{cccccc}
          5&	-1&	-2&	0&	0&	-1\\
          -1&	5&	0&	-1&	-2&	0\\
          -2&	0&	5&	-1&	-1&	0\\
          0&	-1&	-1&	5&	0&	-2\\
          0&	-2&	-1&	0&	5&	-1\\
          -1&	0&	0&	-2&	-1&	5\\
          \end{array}
        \right)$
      & $\left(
          \begin{array}{cccccc}
          3&	-2&	-2&	1&	1&	0\\
          -2&	3&	1&	0&	-2&	1\\
          -2&	1&	3&	-2&	0&	1\\
          1&	0&	-2&	3&	1&	-2\\
          1&	-2&	0&	1&	3&	-2\\
          0&	1&	1&	-2&	-2&	3\\
          \end{array}
        \right)$
       \\
         \hline
    $\beta_{n}$ & 1 & 0 & 2 \\
      \hline
    $\mathbf{Spec}(L_{n})$ & \{0,6,6\} & \{1,4,4,6,6,9\} & \{0,0,1,4,4,9\} \\
    \hline
  \end{tabular}
  \end{small}
  \end{table}
  \end{example}


  \begin{example}\label{example:example4}
  Let $\vec{\mathcal{H}}=(V,\vec{E})$ be a hyperdigraph with $V=\{1,2,3,4,5\}$. Here, $\vec{E}$ is the set of $(p,1)$-shuffles as follows
  \begin{equation*}
    \vec{E}=\{(1)(2),(2)(1),(1)(3),(3)(2),(4)(2),(4)(3),(5)(2),(14)(2),(14)(3),(15)(2),(34)(2)\}.
  \end{equation*}
  By Eq. (\ref{equation:omega_chaincomplex}), we obtain that
  \begin{eqnarray*}
    \Omega_{0}(\vec{\mathcal{H}};\mathbb{R})  &=&0,\\
    \Omega_{1}(\vec{\mathcal{H}};\mathbb{R}) &=& \mathrm{span}\{(1)(2)+(2)(1),(1)(3)+(3)(2)+(2)(1),(3)(2)-(4)(2)+(4)(3)\}, \\
    \Omega_{2}(\vec{\mathcal{H}};\mathbb{R}) &=& \mathrm{span}\{(14)(2)-(14)(3)\}.
  \end{eqnarray*}
  We choose the standard orthogonal basis given by
  \begin{equation*}
  \begin{split}
     & \frac{1}{\sqrt{2}}(1)(2)+\frac{1}{\sqrt{2}}(2)(1),-\frac{1}{\sqrt{10}}(1)(2)+\frac{1}{\sqrt{10}}(2)(1)+\frac{2}{\sqrt{10}}(1)(3)+\frac{2}{\sqrt{10}}(3)(2), \\
      & \frac{1}{\sqrt{65}}(1)(2)-\frac{1}{\sqrt{65}}(2)(1)-\frac{2}{\sqrt{65}}(1)(3)+\frac{3}{\sqrt{65}}(3)(2)-\frac{5}{\sqrt{65}}(4)(2)+\frac{5}{\sqrt{65}}(4)(3),\\
     & \frac{1}{\sqrt{2}}(14)(2)-\frac{1}{\sqrt{2}}(14)(3).
  \end{split}
  \end{equation*}
  Then we have the calculation results shown in Table \ref{table:shuffle}.
\begin{table}
  \centering
  \caption{Illustration of hyperdigraph in Example \ref{example:example4}}\label{table:shuffle}
  \begin{small}
  \begin{tabular}{c|c|c|c}
    \hline
    $n$ & $n=0$ & $n=1$ & $n=2$ \\
    \hline
    $B_{n+1}$ & $0\times 3$ empty matrix & $\left(
                                            \begin{array}{ccc}
                                              -\frac{1}{2} & \frac{3}{2\sqrt{5}} &- \frac{\sqrt{13}}{\sqrt{10}}  \\
                                            \end{array}
                                          \right)$
                     &  $6\times 0$ empty matrix \\
    \hline
    $L_{n}$ & none
     & $\left(
         \begin{array}{ccc}
           \frac{1}{4}& -\frac{3}{4\sqrt{5}} & \frac{\sqrt{13}}{2\sqrt{10}}  \\
           -\frac{3}{4\sqrt{5}}  & \frac{9}{20} & -\frac{3\sqrt{13}}{10\sqrt{2}}  \\
           \frac{\sqrt{13}}{2\sqrt{10}}  & -\frac{3\sqrt{13}}{10\sqrt{2}} & \frac{13}{10} \\
         \end{array}
       \right)$
     & 2 \\
         \hline
    $\beta_{n}$ & 0 & 2 & 0 \\
      \hline
    $\mathbf{Spec}(L_{n})$ & none & \{0,0,2\} & \{2\} \\
    \hline
  \end{tabular}
  \end{small}
  \end{table}

  \end{example}

\section{Persistent   hyperdigraph homology and persistent    hyperdigraph Laplacians }\label{section:persistence_on_hyper}

Topological persistence is a powerful computational tool in the field of topology, which allows us to analyze the topological characteristics of a given dataset. It comprises a set of topological invariants that depend on the filtration parameter of the dataset. By tracking changes in these topological features, one can identify the key parameter values where significant changes occur in the dataset. These values correspond to the topological features that are of interest to us, and they can provide insights into the underlying structure and geometry of the data.

Persistent topological Laplacians, proposed by Wei and coworkers \cite{chen2019evolutionary, wang2019persistent, wei2023topological}, can overcome various limitations of persistent homology or persistent topology. These approaches are more quantitative and specific than persistent homology, and they enable the characterization of non-topological shape evolution and the embedding of physical laws in topological invariants \cite{wei2021persistent}. In this section, we introduce the concepts of persistent   hyperdigraph homology and persistent  hyperdigraph Laplacians. It is worth noting that persistence on topological hyperdigraphs can be reduced to that of topological hypergraphs, since a topological  hypergraph can be seen as a topological  hyperdigraph with trivial directions.

  \subsection{Topological persistence on hyperdigraphs and  hyperdigraph Laplacians}\label{section:persistence}
	
  Let $(\mathbb{R},\leq)$ be a category with objects given by the real numbers and the morphisms given by $a\to b$ for $a\leq b$. A \emph{persistence hyperdigraph} is a functor $\mathcal{P}:(\mathbb{R},\leq)\to \vec{\mathbf{H}}\mathbf{yper}$ from $(\mathbb{R},\leq)$ to the category of hyperdigraphs. For real numbers $a\leq b$, the \emph{$(a,b)$-persistent homology} of  a persistent hyperdigraph homology $\mathcal{P}:(\mathbb{R},\leq)\to \vec{\mathbf{H}}\mathbf{yper}$ is given by
  \begin{equation*}
    H_{p}^{a,b}(\mathcal{P};\mathbb{K})=\im(H_{p}(\mathcal{P}(a);\mathbb{R})\to H_{p}(\mathcal{P}(b);\mathbb{R})).
  \end{equation*}
  The $(a,b)$-\emph{persistent Betti number} of $\mathcal{P}$ is defined to be $\beta_{p}^{a,b}=\dim H_{p}^{a,b}(\mathcal{P};\mathbb{R})$ for $p\geq 0$.

  Next, we formulate persistent  hyperdigraph  Laplacians. Let $\vec{\mathbf{H}}\mathbf{yper}^{\hookrightarrow}$ be the subcategory of $\vec{\mathbf{H}}\mathbf{yper}$ with topological hyperdigraphs as objects and inclusions of hyperdigraphs as morphisms. Let $\mathcal{P}:(\mathbb{R},\leq)\to \vec{\mathbf{H}}\mathbf{yper}^{\hookrightarrow}$ be a persistent hyperdigraph homology.
  For real numbers $a\leq b$, we have an inclusion $j^{a,b}:\mathcal{P}(a)\hookrightarrow \mathcal{P}(b)$ of topological hyperdigraphs. Let $\Omega^{t}_{p}=\Omega^{t}_{p}(\mathcal{P}(a);\mathbb{R})$ for any $t\in \mathbb{R}$. The inclusion $j^{a,b}$ induces an inclusion of chain complexes $\mathfrak{j}^{a,b}_{p}:\Omega^{a}_{p}\hookrightarrow \Omega^{b}_{p}$ for $p\geq 0$.
  Let $\Omega_{p+1}^{a,b}=\{x\in\Omega_{p+1}^{b}|\partial^{b}_{p+1} x\in \Omega_{p}^{a}\}$. Let $d_{p+1}^{a,b}=(\mathfrak{j}_{p}^{a,b})^{\ast}\circ d_{p+1}^{b}\circ\iota^{a,b}_{p+1}$.
  \begin{equation*}
    \xymatrix{
  \Omega_{p+1}^{a}\ar@{->}[rr]^{ d_{p+1}^{a}}\ar@{^{(}->}[dd]_{\mathfrak{j}^{a,b}_{p}}&&\quad \Omega_{p}^{a}\quad\ar@<0.75ex>[rr]^-{\textcolor[rgb]{0.00,0.07,1.00}{ d_{p}^{a}} } \ar@{^{(}->}[dd]^{\mathfrak{j}^{a,b}_{p}}\ar@<0.75ex>[ld]^-{\textcolor[rgb]{0.00,0.07,1.00}{( d_{p+1}^{a,b})^{\ast}}}&&\quad \Omega_{p-1}^{a}\ar@<0.75ex>[ll]^-{\textcolor[rgb]{0.00,0.07,1.00}{( d_{p}^{a})^{\ast}}}\ar@{^{(}->}[dd]^{\mathfrak{j}^{a,b}_{p}}\\
                           &\Omega_{p+1}^{a,b}\ar@<0.75ex>[ru]^-{\textcolor[rgb]{0.00,0.07,1.00}{ d_{p+1}^{a,b}} }\ar@{^{(}->}[ld]_{\iota_{p+1}^{a,b}}&&&                       \\
  \Omega_{p+1}^{b}\ar@{->}[rr]^{ d_{p+1}^{b}}&&\quad \Omega_{p}^{b}\quad\ar@{->}[rr]^{ d_{p}^{b}}&&\quad \Omega_{p-1}^{b}
  }
  \end{equation*}
  The \emph{$p$-th $(a,b)$-persistent hyperdigraph Laplacian} $\Delta_{p}^{a,b}:\Omega_{p}^{a}\to \Omega_{p}^{a}$ is defined by
  \begin{equation*}
    \Delta_{p}^{a,b}= d_{p+1}^{a,b}\circ ( d_{p+1}^{a,b})^{\ast}+( d_{p}^{a})^{\ast}\circ d_{p}^{a}.
  \end{equation*}
  We choose two families of the standard orthogonal bases of $\Omega_{p}^{a} $ and $ \Omega_{p+1}^{a,b}$ for $p\geq 0$, respectively. Let $B_{p}^{a}$ and $B_{p}^{a,b}$ be the representation matrices of $d_{p}^{a}$ and $d_{p}^{a,b}$, respectively. Then, the representation matrix of the Laplacian $\Delta_{p}^{a,b}$ is given by
  \begin{equation*}
    L_{p}^{a,b}= (B_{p+1}^{a,b})^{T}\circ B_{p+1}^{a,b}+B_{p}^{a}\circ (B_{p}^{a})^{T}.
  \end{equation*}
  The spectrum of $L_{p}^{a,b}$ is displayed as
  \begin{equation*}
    \mathbf{Spec}(L_{p}^{a,b})=\{\lambda_{p}^{a,b}(1),\lambda_{p}^{a,b}(2),\dots,\lambda_{p}^{a,b}(n)\},\quad p\geq 0,
  \end{equation*}
  where $n=\dim \Omega_{p}^{a}$. The smallest non-zero eigenvalue in $\mathbf{Spec}(L_{p}^{a,b})$ is denoted by $\tilde{\lambda}_{p}^{a,b}$, which has relationship with the Cheeger isoperimetric constant.

  By the persistent Hodge decomposition theorem \cite{liu2023algebraic}, one has
  \begin{theorem}
  $\Omega_{p}^{a}=\ker \Delta_{p}^{a,b}\oplus \im d_{p+1}^{a,b}\oplus \im ( d_{p}^{a})^{\ast}$, where $\ker \Delta_{p}^{a,b}\cong H_{p}^{a,b}(\mathcal{P};\mathbb{R})$.
  \end{theorem}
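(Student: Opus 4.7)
The plan is to mirror the algebraic Hodge decomposition already employed twice above (for $\Delta_p^{\mathcal{H}}$ and $\Delta_p^{\vec{\mathcal{H}}}$), while carefully tracking the mixed domains $\Omega_p^a$ and $\Omega_{p+1}^{a,b}$. The first step is to establish the persistent version of the chain identity, namely $d_p^a\circ d_{p+1}^{a,b}=0$. By construction $d_{p+1}^{a,b}=(\mathfrak{j}_p^{a,b})^{\ast}\circ d_{p+1}^b\circ \iota_{p+1}^{a,b}$, and for $w\in\Omega_{p+1}^{a,b}$ the element $d_{p+1}^b w$ already lies in $\Omega_p^a$, so the projection $(\mathfrak{j}_p^{a,b})^{\ast}$ acts as the identity. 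Since $d_p^a$ is the restriction of $d_p^b$ and $d_p^b\circ d_{p+1}^b=0$, the composite vanishes.

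Next, I would verify that $\Delta_p^{a,b}$ is self-adjoint and positive semidefinite. Both facts are formal: the inner-product identity $\langle\Delta_p^{a,b}x,y\rangle=\langle d_p^a x,d_p^a y\rangle+\langle (d_{p+1}^{a,b})^{\ast}x,(d_{p+1}^{a,b})^{\ast}y\rangle$ is symmetric in $x,y$, and setting $y=x$ gives nonnegativity. Consequently $\Omega_p^a$ admits the orthogonal decomposition $\Omega_p^a=\ker\Delta_p^{a,b}\oplus\im\Delta_p^{a,b}$, and the same inner-product identity yields
\begin{equation*}
 \ker\Delta_p^{a,b}=\ker d_p^a\cap \ker (d_{p+1}^{a,b})^{\ast}.
\end{equation*}
The orthogonality $\im d_{p+1}^{a,b}\perp \im (d_p^a)^{\ast}$ follows from the first step via $\langle d_{p+1}^{a,b}u,(d_p^a)^{\ast}v\rangle=\langle d_p^a d_{p+1}^{a,b}u,v\rangle=0$. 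Combining these with the orthogonal complements $\ker d_p^a=(\im (d_p^a)^{\ast})^{\perp}$ and $\ker (d_{p+1}^{a,b})^{\ast}=(\im d_{p+1}^{a,b})^{\perp}$ inside $\Omega_p^a$ produces the desired three-summand decomposition.

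The concluding and most substantive step is the isomorphism $\ker\Delta_p^{a,b}\cong H_p^{a,b}(\mathcal{P};\mathbb{R})$. Here I would argue directly at the level of cycles: a class $[z]\in H_p(\mathcal{P}(a);\mathbb{R})$ with $z\in\ker d_p^a$ maps to zero in $H_p(\mathcal{P}(b);\mathbb{R})$ precisely when there exists $w\in\Omega_{p+1}^b$ satisfying $d_{p+1}^b w=z$; such a $w$ automatically lies in $\Omega_{p+1}^{a,b}$ because $d_{p+1}^b w=z\in \Omega_p^a$. Hence $H_p^{a,b}(\mathcal{P};\mathbb{R})\cong \ker d_p^a / \im d_{p+1}^{a,b}$. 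The three-summand decomposition gives $\ker d_p^a=\ker\Delta_p^{a,b}\oplus \im d_{p+1}^{a,b}$, so the quotient is identified with $\ker\Delta_p^{a,b}$, as required.

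The main obstacle I anticipate is the bookkeeping around the projection $(\mathfrak{j}_p^{a,b})^{\ast}$ and the inclusion $\iota_{p+1}^{a,b}$: one must confirm that on $\Omega_{p+1}^{a,b}$ the operator $d_{p+1}^{a,b}$ coincides with the genuine restriction of $d_{p+1}^b$, so that the chain-identity $d_p^a\circ d_{p+1}^{a,b}=0$ really holds inside $\Omega_p^a$ and so that the reduction of $H_p^{a,b}$ to $\ker d_p^a/\im d_{p+1}^{a,b}$ is legitimate. Once this compatibility is settled, the remainder is a textbook finite-dimensional Hodge argument and can be invoked from \cite{liu2023algebraic}.
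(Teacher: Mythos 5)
Your proof is correct: the compatibility check that $d_{p+1}^{a,b}$ agrees with the genuine restriction of $d_{p+1}^{b}$ on $\Omega_{p+1}^{a,b}$ (so that $d_{p}^{a}\circ d_{p+1}^{a,b}=0$), the resulting three-way orthogonal decomposition, and the identification $H_{p}^{a,b}(\mathcal{P};\mathbb{R})\cong \ker d_{p}^{a}/\im d_{p+1}^{a,b}$ via the first isomorphism theorem are all sound. The paper itself gives no proof here --- it simply invokes the persistent Hodge decomposition theorem from \cite{liu2023algebraic} --- and your argument is precisely the standard one that reference supplies, so nothing further is needed.
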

  Given a persistent hyperdigraph $\mathcal{P}:(\mathbb{R},\leq)\to \vec{\mathbf{H}}\mathbf{yper}$, the theorem says that the number of zero eigenvalues of $\Delta_{p}^{a,b}$ equals to the $(a,b)$-persistent Betti number $\beta_{p}^{a,b}$.
  \begin{corollary}
  $N(\Delta_{p}^{a,b})=\beta_{p}^{a,b}$, where $N(\Delta_{p}^{a,b})$ denotes the number of zeros of $\Delta_{p}^{a,b}$.
  \end{corollary}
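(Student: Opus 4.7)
The plan is to treat this corollary as the immediate spectral translation of the preceding theorem. The persistent Hodge decomposition already identifies $\ker\Delta_{p}^{a,b}$ with $H_{p}^{a,b}(\mathcal{P};\mathbb{R})$, so all that remains is to argue that the number of zero eigenvalues of $\Delta_{p}^{a,b}$, counted with multiplicity, equals $\dim\ker\Delta_{p}^{a,b}$.

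First I would verify that $\Delta_{p}^{a,b}:\Omega_{p}^{a}\to\Omega_{p}^{a}$ is self-adjoint and positive semi-definite with respect to the inner product that $\Omega_{p}^{a}$ inherits from $F_{p}(\mathcal{P}(a);\mathbb{R})$. The computation
\begin{equation*}
\langle\Delta_{p}^{a,b}x,y\rangle = \langle d_{p}^{a}x,\, d_{p}^{a}y\rangle + \langle (d_{p+1}^{a,b})^{\ast}x,\,(d_{p+1}^{a,b})^{\ast}y\rangle
\end{equation*}
is manifestly symmetric in $x,y$ and non-negative when $y=x$, in complete parallel with the proof given earlier for $\Delta_{p}^{\vec{\mathcal{H}}}$ in Section \ref{section:hyperdigraph}. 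The finite-dimensional spectral theorem then guarantees that $\Delta_{p}^{a,b}$ is diagonalizable over $\mathbb{R}$ with non-negative eigenvalues, so the algebraic multiplicity of the eigenvalue $0$ coincides with $\dim\ker\Delta_{p}^{a,b}$. Combining this with the preceding theorem and the definition of persistent Betti number from Section \ref{section:persistence},
\begin{equation*}
N(\Delta_{p}^{a,b}) \;=\; \dim\ker\Delta_{p}^{a,b} \;=\; \dim H_{p}^{a,b}(\mathcal{P};\mathbb{R}) \;=\; \beta_{p}^{a,b},
\end{equation*}
which is exactly the claim.

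There is essentially no serious obstacle, since the heavy lifting is carried out in the persistent Hodge decomposition theorem cited from Liu et al. The only subtlety worth making explicit in the write-up is the convention: $N(\Delta_{p}^{a,b})$ denotes the number of zero eigenvalues counted with algebraic multiplicity, which agrees with the geometric multiplicity (and hence with $\dim\ker$) precisely because self-adjointness guarantees diagonalizability. Without flagging this convention one cannot cleanly pass from the algebraic identification of the kernel to the spectral count, so the argument should open with the self-adjointness observation even though its proof is routine.
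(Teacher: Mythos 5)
Your argument is correct and follows exactly the route the paper intends: the corollary is read off from the preceding persistent Hodge decomposition theorem, with the identification $\ker\Delta_{p}^{a,b}\cong H_{p}^{a,b}(\mathcal{P};\mathbb{R})$ doing all the work and self-adjointness plus positive semi-definiteness converting $\dim\ker$ into the count of zero eigenvalues. The paper leaves this translation implicit (it states the conclusion immediately after the theorem without proof), so your only addition is to make the routine spectral-theorem step explicit, which is a faithful elaboration rather than a different approach.
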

  If $a=b$, we have a decomposition
  \begin{equation}\label{equation:decom}
  \Omega_{p}^{a}=\ker \Delta_{p}^{a}\oplus \im d_{p+1}^{a}\oplus \im ( d_{p}^{a})^{\ast},
  \end{equation}
  where $\ker \Delta_{p}^{a}\cong H_{p}^{a}(\mathcal{P};\mathbb{R})$. Let $\pi_{p}^{a}:\Omega_{p}^{a}\to \ker \Delta_{p}^{a}$. Then for any element $x\in \Omega_{p}^{a}$, the element $\psi_{p}^{a}(x)$ is the harmonic component of $x$. Let $\mathcal{H}_{p}^{a}=\ker\Delta_{p}^{a}$. For real numbers $a\leq b$, we denote the \emph{$(a,b)$-persistent harmonic space} by
  \begin{equation*}
  \mathcal{H}^{a,b}_{p}=\im (\pi_{p}^{b}\circ\mathfrak{j}^{a,b}_{p}:\mathcal{H}^{a}_{p}\to \mathcal{H}^{b}_{p}).
  \end{equation*}
  \begin{theorem}\label{theorem:isomorphism}
  For any real numbers $a\leq b$, we have
  \begin{equation*}
  \mathcal{H}_{p}^{a,b}\cong H_{p}^{a,b}(\mathcal{P};\mathbb{R}),\quad p\geq 0.
  \end{equation*}
  \end{theorem}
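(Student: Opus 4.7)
The plan is to reduce the statement to the naturality of the Hodge decomposition along the inclusion $\mathfrak{j}^{a,b}$. Proposition~\ref{proposition:decomposition1} already identifies $\mathcal{H}_p^a \cong H_p(\mathcal{P}(a);\mathbb{R})$ and $\mathcal{H}_p^b \cong H_p(\mathcal{P}(b);\mathbb{R})$ via the harmonic-representative correspondence. So it suffices to exhibit a commutative square whose top arrow is $\pi_p^b \circ \mathfrak{j}_p^{a,b}$, whose bottom arrow is the inclusion-induced map $H_p(\mathcal{P}(a);\mathbb{R}) \to H_p(\mathcal{P}(b);\mathbb{R})$, and whose vertical arrows are these Hodge isomorphisms; passing to images of the two rows then yields the claimed isomorphism between $\mathcal{H}_p^{a,b}$ and $H_p^{a,b}(\mathcal{P};\mathbb{R})$.

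First, I would take $x \in \mathcal{H}_p^a = \ker d_p^a \cap \ker (d_{p+1}^a)^\ast$, note that $x$ is a cycle since $d_p^a x = 0$, and observe that $\mathfrak{j}_p^{a,b}(x)$ is therefore a cycle in $\Omega_p^b$ because $\mathfrak{j}^{a,b}$ is a chain map. Its class $[\mathfrak{j}_p^{a,b}(x)] \in H_p(\mathcal{P}(b);\mathbb{R})$ is by definition the image of $[x]$ under the persistence map on homology. Next, I would apply the decomposition~(\ref{equation:decom}) at scale $b$ to write $\mathfrak{j}_p^{a,b}(x) = h + d_{p+1}^b y + (d_p^b)^\ast z$ with $h \in \mathcal{H}_p^b$, and then apply $d_p^b$ to both sides; using $d_p^b \circ d_{p+1}^b = 0$ and $d_p^b h = 0$, this yields $d_p^b (d_p^b)^\ast z = 0$, and pairing with $z$ forces $\langle (d_p^b)^\ast z,(d_p^b)^\ast z\rangle = 0$, so $(d_p^b)^\ast z = 0$. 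Hence $\mathfrak{j}_p^{a,b}(x) - h = d_{p+1}^b y$ is a boundary, so $h = \pi_p^b(\mathfrak{j}_p^{a,b}(x))$ represents $[\mathfrak{j}_p^{a,b}(x)]$ under the Hodge isomorphism at scale $b$; this is precisely the commutativity of the square.

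The main technical point I anticipate is exactly this vanishing of the coexact component $(d_p^b)^\ast z$ when the element being decomposed is a cycle: it is what ties the harmonic projection of an inclusion-pushed cycle back to its homology class at scale $b$, and hence what makes $\pi_p^b \circ \mathfrak{j}_p^{a,b}$ the correct Hodge-theoretic avatar of the persistence map on homology. Once this one-line inner-product argument is in place, the images of the top and bottom rows of the commutative square coincide under the vertical isomorphisms, giving $\mathcal{H}_p^{a,b} \cong H_p^{a,b}(\mathcal{P};\mathbb{R})$ as required.
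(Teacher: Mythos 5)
Your proposal is correct and follows essentially the same route as the paper: identify $\mathcal{H}_p^a$ and $\mathcal{H}_p^b$ with homology via the Hodge decomposition, verify that the square with top arrow $\pi_p^b\circ\mathfrak{j}_p^{a,b}$ and bottom arrow $H_p(\mathfrak{j}^{a,b})$ commutes, and take images. Your explicit inner-product argument that the coexact component $(d_p^b)^{\ast}z$ of a cycle vanishes is a welcome detail that the paper's proof leaves implicit when it writes a cycle as harmonic plus boundary.
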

  \begin{proof}
  We will first prove that $i^{a}:\mathcal{H}_{\ast}^{a}\hookrightarrow \Omega_{\ast}^{a}$ is a quasi-isomorphism. Indeed, for any $x\in \mathcal{H}_{\ast}^{a}$, if $i^{a}(x)$ is a boundary in $\Omega_{\ast}^{a}$, then $x$ is a boundary in $\Omega_{\ast}^{a}$. By Eq. (\ref{equation:decom}), one has $x=0$. So $H(i^{a}):\mathcal{H}_{\ast}^{a}\to H^{a}_{\ast}(\mathcal{P};\mathbb{R})$ is an injection. On the other, for any cycle $z$ in $\Omega_{\ast}^{a}$, by Eq. 
	(\ref{equation:decom}), we can write
\begin{equation*}
  z=z_{0}+d^{a}_{\ast}z_{1}
\end{equation*}
for some $z_{0}\in \mathcal{H}_{\ast}^{a}$ and $z_{1}\in \im d_{\ast}^{a}$. Thus we have $H(i^{a})[z_{0}]=[z_{0}]=[z]$. So the map $H(i^{a}):\mathcal{H}_{\ast}^{a}\to H^{a}_{\ast}(\mathcal{P};\mathbb{R})$ is a surjection.

Now, to prove our theorem, it suffices to show the following diagram commutates.
 \begin{equation*}
  \xymatrix{
  \mathcal{H}^{a}_{p}\ar@{->}[rr]^{\pi_{p}^{b}\circ\mathfrak{j}^{a,b}_{p}}\ar@{->}[d]_{\cong}&& \mathcal{H}^{b}_{p}\ar@{->}[d]^{\cong}\\
   H^{a}_{p}(\mathcal{P};\mathbb{R})\ar@{->}[rr]^{H_{p}(\mathfrak{j}^{a,b}_{\ast})}&& H^{b}_{p}(\mathcal{P};\mathbb{R}) 
  }
 \end{equation*}
For any $x\in \mathcal{H}^{a}_{p}$, we have
\begin{equation*}
  H_{p}(i^{b})\circ\pi_{p}^{b}\circ\mathfrak{j}^{a,b}_{p}(x)=[\pi_{p}^{b}(x)].
\end{equation*}
On the other hand, we obatin
\begin{equation*}
  H_{p}(\mathfrak{j}^{a,b}_{\ast})\circ H_{p}(i^{a})(x)=H_{p}(\mathfrak{j}^{a,b}_{\ast}\circ i^{a})(x)=[x].
\end{equation*}
By Eq. (\ref{equation:decom}), we can write
\begin{equation*}
  z=\pi_{p}^{b}(x)+d^{a}_{\ast}z_{1}
\end{equation*}
for some $z_{1}\in \im d_{\ast}^{a}$. It follows that $H_{p}(i^{b})\circ\pi_{p}^{b}\circ\mathfrak{j}^{a,b}_{p}(x)=H_{p}(\mathfrak{j}^{a,b}_{\ast})\circ H_{p}(i^{a})(x)$. Thus the above diagram is commutative. The desired result follows.
  \end{proof}
Theorem \ref{theorem:isomorphism} says that the $(a,b)$-persistent homology coincides with the $(a,b)$-persistent harmonic space. Or more precisely, the homology and the harmonic space possess the same persistence.

At the end of this section, it is worth noting that a hypergraph homology can be regarded as a hyperdigraph homology with trivial directions, and therefore, the definitions and results discussed above are also applicable to topological  hypergraphs. Consequently, we do not provide a redundant description of persistent hypergraph Laplacians in this context.

  \subsection{Volume-based filtration}
In this section, our focus is on constructing the persistence of point sets using the topological hyperdigraph model, which involves considering the volume of hyperedges. To illustrate this approach, we provide a specific example construction.

  Let $\vec{\mathcal{H}}=(V,\vec{E})$ be a hyperidgraph. Let $f$ be a real-valued function on $\vec{E}$. For each real number $a$, we have a hyperdigraph $\vec{\mathcal{H}}^{f}(a)=(V,\vec{E}(a))$, where $\vec{E}(a)=\{x\in E|f(x)\leq a\}$. Thus $\vec{\mathcal{H}}^{f}:(\mathbb{R},\leq)\to \mathbf{Hyper},a\mapsto \vec{\mathcal{H}}^{f}(a)$ is a persistent hyperdigraph homology. Then we have the persistent  hyperdigraph homology of function $f$ on $\vec{\mathcal{H}}$ given by
  \begin{equation*}
    H_{p}^{a,b}(\vec{\mathcal{H}}^{f};\mathbb{R})=\im(H_{p}(\vec{\mathcal{H}}^{f}(a);\mathbb{R})\to H_{p}(\vec{\mathcal{H}}^{f}(b);\mathbb{R})).
  \end{equation*}
  The corresponding persistent hyperdigraph Laplacian for $f$ on $\mathcal{H}$ can be built in a similar way.

  Now, let $\vec{\mathcal{H}}=(V,\vec{E})$ be a hyperidgraph homology such that the vertex set $V$ is set of points embedded in the Euclidean space $\mathbb{R}^{n}$.
  For each directed hyperedge $e=(v_{0},v_{1},\dots,v_{p})$, let $A=(v_{1}-v_{0},v_{2}-v_{0},\dots,v_{p}-v_{0})$. We have a volume given by
  \begin{equation*}
    \vol(e)=\frac{1}{p!}\sqrt{|\det(A^{T}A)|},\quad p\geq 1.
  \end{equation*}
  We make the convention that $\vol (\{v_{0}\})=0$ for any $v_{0}\in V$.
  Then $f(e)=\left(p!\vol(e)\right)^{\frac{1}{p}}=|\det(A^{T}A)|^{\frac{1}{2p}}$ is a real-valued function defined on the set $\mathbf{P}(V)$. Here, we set $f(\{v_{0}\})=0$ for any $v_{0}\in V$. The volume-based $(a,b)$-persistent  hyperdigraph  homology is given by
  \begin{equation*}
    H_{p}^{a,b}(\vec{\mathcal{H}}^{f};\mathbb{R})=\im(H_{p}(\vec{\mathcal{H}}^{f}(a);\mathbb{R})\to H_{p}(\vec{\mathcal{H}}^{f}(b);\mathbb{R})).
  \end{equation*}
  The  $p$-th $(a,b)$-persistent hyperdigraph Laplacian $\Delta_{p}^{a,b}:\Omega_{p}(\vec{\mathcal{H}}^{f}(a);\mathbb{R})\to \Omega_{p}(\vec{\mathcal{H}}^{f}(a);\mathbb{R})$ for $\vec{\mathcal{H}}^{f}$ is given by
  \begin{equation*}
    \Delta_{p}^{a,b}= d_{p+1}^{a,b}\circ ( d_{p+1}^{a,b})^{\ast}+( d_{p}^{a})^{\ast}\circ d_{p}^{a}.
  \end{equation*}
  Here, $d_{p}^{a}$ and $d_{p}^{a,b}$ are defined as in Section \ref{section:persistence}.

  \begin{example}\label{example:volume}
  Consider hyperidgraph $\vec{\mathcal{H}}=(V,\vec{E})$ such that $V=\{v_{0},v_{1},v_{2}\}$ with $v_{0}=(0,0),v_{1}=(1,2),v_{2}=(2,1)$ in the Euclidean space $\mathbb{R}^{2}$ and
  \begin{equation*}
    \vec{E}=\{(v_{0}),(v_{1}),(v_{2}),(v_{0}v_{1}),(v_{1}v_{2}),(v_{0}v_{2}),(v_{0}v_{1}v_{2})\}.
  \end{equation*}
  It follows that
  \begin{equation*}
    f((v_{0},v_{1}))=\sqrt{5},\quad f((v_{0},v_{2}))=\sqrt{5},\quad f((v_{1},v_{2}))=\sqrt{2},\quad f((v_{0},v_{1},v_{2}))=\sqrt{3}.
  \end{equation*}
  Then we have a filtration of hypergraphs given by
  \begin{eqnarray*}
    \vec{\mathcal{H}}^{f}(0) &=& (V,\{(v_{0}),(v_{1}),(v_{2})\}), \\
    \vec{\mathcal{H}}^{f}(\sqrt{2}) &=& (V,\{(v_{0}),(v_{1}),(v_{2}),(v_{1}v_{2})\}), \\
    \vec{\mathcal{H}}^{f}(\sqrt{3}) &=& (V,\{(v_{0}),(v_{1}),(v_{2}),(v_{1}v_{2}),(v_{0}v_{1}v_{2})\}), \\
    \vec{\mathcal{H}}^{f}(\sqrt{5}) &=& (V,\{(v_{0}),(v_{1}),(v_{2}),(v_{0}v_{1}),(v_{1}v_{2}),(v_{0}v_{2}),(v_{0}v_{1}v_{2})\}).
  \end{eqnarray*}
  \begin{table}
    \centering
    \caption{The Betti numbers and spectra for the volume-based filtration of hyperdigraphs in Example \ref{example:volume}}\label{table:volume}
    \begin{tabular}{c|c|c|c|c|c|c}
      \hline
      \multirow{2}{0.4in}{$\mathcal{H}^{f}(t)$} &\multicolumn{3}{c|}{Betti numbers}  & \multicolumn{3}{c}{Spectra} \\
       & $\beta_{0}$ & $\beta_{1}$ & $\beta_{2}$ & $p=0$ & $p=1$ & $p=2$ \\
      \hline
      $t=0$ & 3 & 0 & 0 & \{0,0,0\} & none & none  \\
      $t=\sqrt{2}$ & 2 & 0 & 0 & \{0,0,2\} & \{2\} & none  \\
      $t=\sqrt{3}$ & 2 & 0 & 0 & \{0,0,2\} & \{2\} & none  \\
      $t=\sqrt{5}$ & 1 & 0 & 0 & \{0,3,3\} & \{3,3,3\} & \{3\}  \\
      \hline
    \end{tabular}
  \end{table}
  By a straightforward calculation, we obtain the Betti numbers and the spectra for the volume-based filtration of  hyperdigraph homology in Table \ref{table:volume}.
  \end{example}

  \subsection{Distance-based filtration}
  In this section, we will consider the distance-based filtration of a given persistent hyperdigraph Laplacian embedded into the Euclidean space. This construction is similar to the persistence on the Vietoris-Rips complex. The application of this work is mainly based on the distance-based filtration.

  Let $\vec{\mathcal{H}}=(V,\vec{E})$ be a hyperdigraph homology such that $V$ is set of points in the Euclidean space. For each real number $a\in \mathbb{R}$, we set $\mathcal{P}(a)=(V,\vec{E}(a))$, where
  \begin{equation*}
    \vec{E}(a)=\{x\in \vec{E}|\text{ the distance between every pair of points in $x$ is at most $a$}\}.
  \end{equation*}
  Then $\mathcal{P}:(\mathbb{R},\leq)\to \vec{\mathbf{H}}\mathbf{yper}^{\hookrightarrow},a\mapsto \mathcal{P}(a)$ is a persistent hyperdigraph homology. Similarly, the $(a,b)$-persistent hyperdigraph homology of $\vec{\mathcal{H}}$ is given by
  \begin{equation*}
    H_{p}^{a,b}(\vec{\mathcal{H}};\mathbb{R})=\im(H_{p}(\mathcal{P}(a);\mathbb{R})\to H_{p}(\mathcal{P}(b);\mathbb{R})).
  \end{equation*}
  Moreover, we can define the $p$-th $(a,b)$-persistent hyperdigraph Laplacian $\Delta_{p}^{a,b}:\Omega_{p}(\mathcal{P}(a);\mathbb{R})\to \Omega_{p}(\mathcal{P}(a);\mathbb{R})$ by
  \begin{equation*}
    \Delta_{p}^{a,b}= d_{p+1}^{a,b}\circ ( d_{p+1}^{a,b})^{\ast}+( d_{p}^{a})^{\ast}\circ d_{p}^{a}.
  \end{equation*}
  Here, $d_{p}^{a}$ and $d_{p}^{a,b}$ are defined as in Section \ref{section:persistence}. Choose two family of standard orthogonal basis of $\Omega_{p}(\mathcal{P}(a);\mathbb{R})$ and $\Omega_{p+1}^{a,b}$ for $p\geq 0$, respectively. Here, $\Omega_{p+1}^{a,b}=\{x\in\Omega_{p+1}(\mathcal{P}(b);\mathbb{R})|\partial^{b}_{p+1} x\in \Omega_{p}(\mathcal{P}(a);\mathbb{R})\}$. Let $B_{p}^{a}$ and $B_{p}^{a,b}$ be the representation matrices of $d_{p}^{a}$ and $d_{p}^{a,b}$, respectively, with respect to the chosen standard basis. We have the representation matrix of $\Delta^{a,b}_{p}$ given by
  \begin{equation*}
    L_{p}^{a,b}= (B_{p+1}^{a,b})^{T}\circ B_{p+1}^{a,b}+B_{p}^{a}\circ (B_{p}^{a})^{T}.
  \end{equation*}
  The corresponding Betti number $\beta_{p}^{a,b}$ is equal to the number of zero eigenvalues of $L_{p}^{a,b}$. Moreover, the spectra can be arranged in an ascending order as
  \begin{equation*}
    \mathbf{Spec}(L_{p}^{a,b})=\{\lambda_{p}^{a,b}(1),\lambda_{p}^{a,b}(2),\dots,\lambda_{p}^{a,b}(n)\}.
  \end{equation*}
  In particular, the Fiedler value $\lambda_{p}^{a,b}(2)$, the second smallest Laplacian eigenvalue of $L_{p}^{a,b}$, is an important feature in various fields.

  \begin{figure}[!ht]
    \centering
    \includegraphics[width=16cm]{./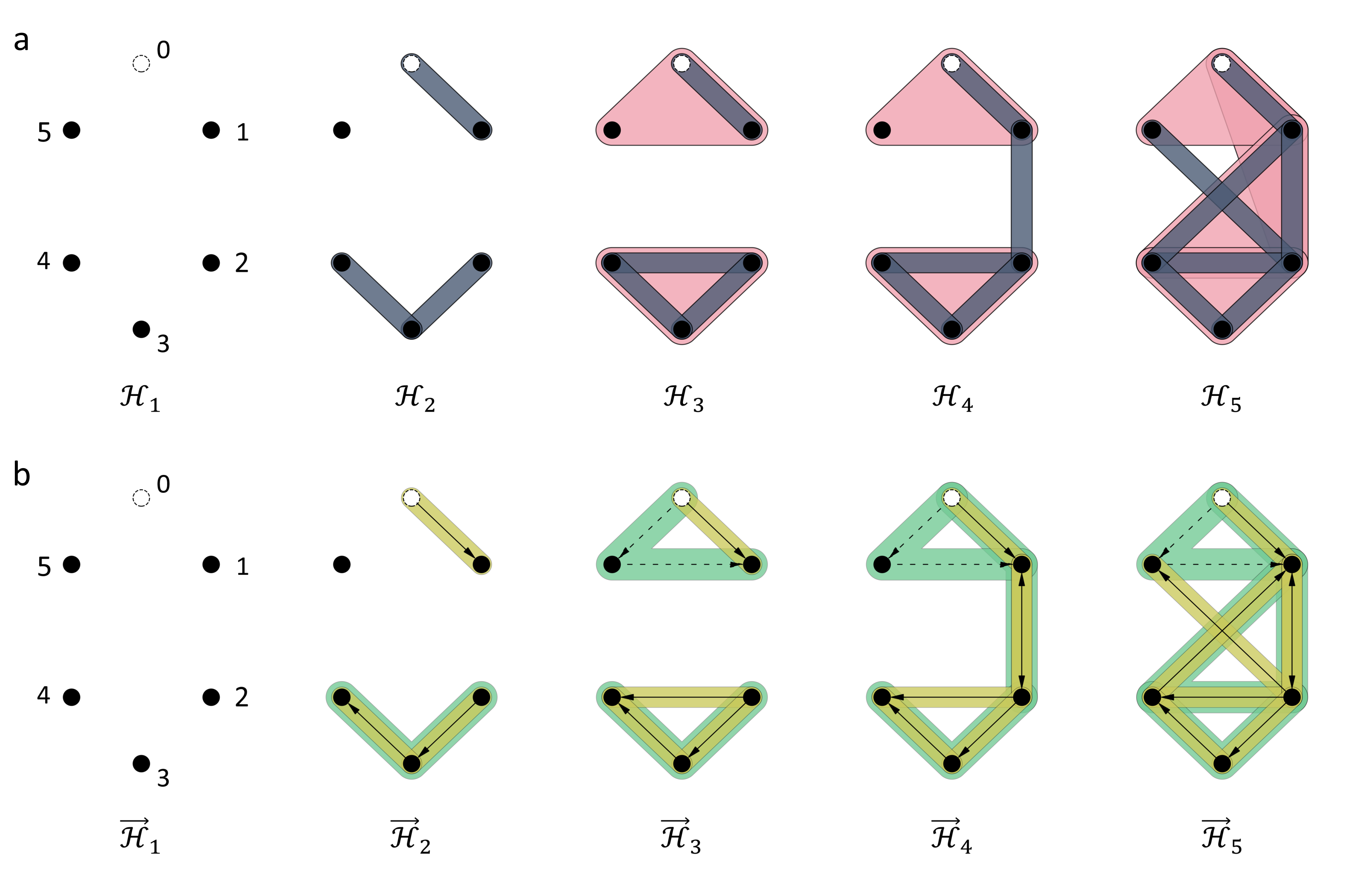}
    \caption{
      {\bf a} Persistent hypergraph of 6-vertex system ($\vec{\mathcal{H}}$). The distances between the relevant vertices are $D_{45}=D_{12}=6$, $D_{05}=D_{01}=D_{23}=D_{34}=\sqrt{5}$, and $D_{24}=D_{15}=4$.
      {\bf b} Persistent hyperdigraph ($\mathcal{\vec{H}}$) of 6-vertex system. The distances between the corresponding vertices are the same as {\bf a}.
    }
    \label{figure:persistence_example}
  \end{figure}

  As shown in Figure \ref{figure:persistence_example}, the hypergraph homology ($\mathcal{H}$) and hyperdigraph homology ($\mathcal{\vec{H}}$) are defined on the same vertex set $V=\{0, 1, 2, 3, 4, 5\}$ in the Euclidean space. The distance between $i$th and $j$th vertices are denoted as $D_{ij}$. Here, we have $D_{45}=D_{12}=6$, $D_{05}=D_{01}=D_{23}=D_{34}=\sqrt{5}$, and $D_{24}=D_{15}=4$. The largest hypergraph homology ($\mathcal{H}_5$) and largest hyperdigraph homology ($\mathcal{\vec{H}}_5$) are predefined in this example. The hyperedge set of $\mathcal{H}_5$ is the same as the Example \ref{example:hypergraph2}. The $\mathcal{H}_1, \mathcal{H}_2,\dots,\mathcal{H}_5$ in Figure \ref{figure:persistence_example}{\bf a} represent the hypergraph homology along the distance-based filtration, where the corresponding filtration parameters are 0, $\sqrt{5}$, 4, 6, and $\sqrt{53}$, respectively. The $\mathcal{\vec{H}}_1,\mathcal{\vec{H}}_2, \dots , \mathcal{\vec{H}}_5$ in Figure \ref{figure:persistence_example}{\bf b} represent the hyperdigraph homology along the distance-based filtration. And the filtration parameters for $\mathcal{\vec{H}}_n$ ($n=1,2,3,4,5$) are the same as the corresponding filtration parameters for $\mathcal{H}_n$.

  \begin{figure}[!ht]
    \centering
    \includegraphics[width=16cm]{./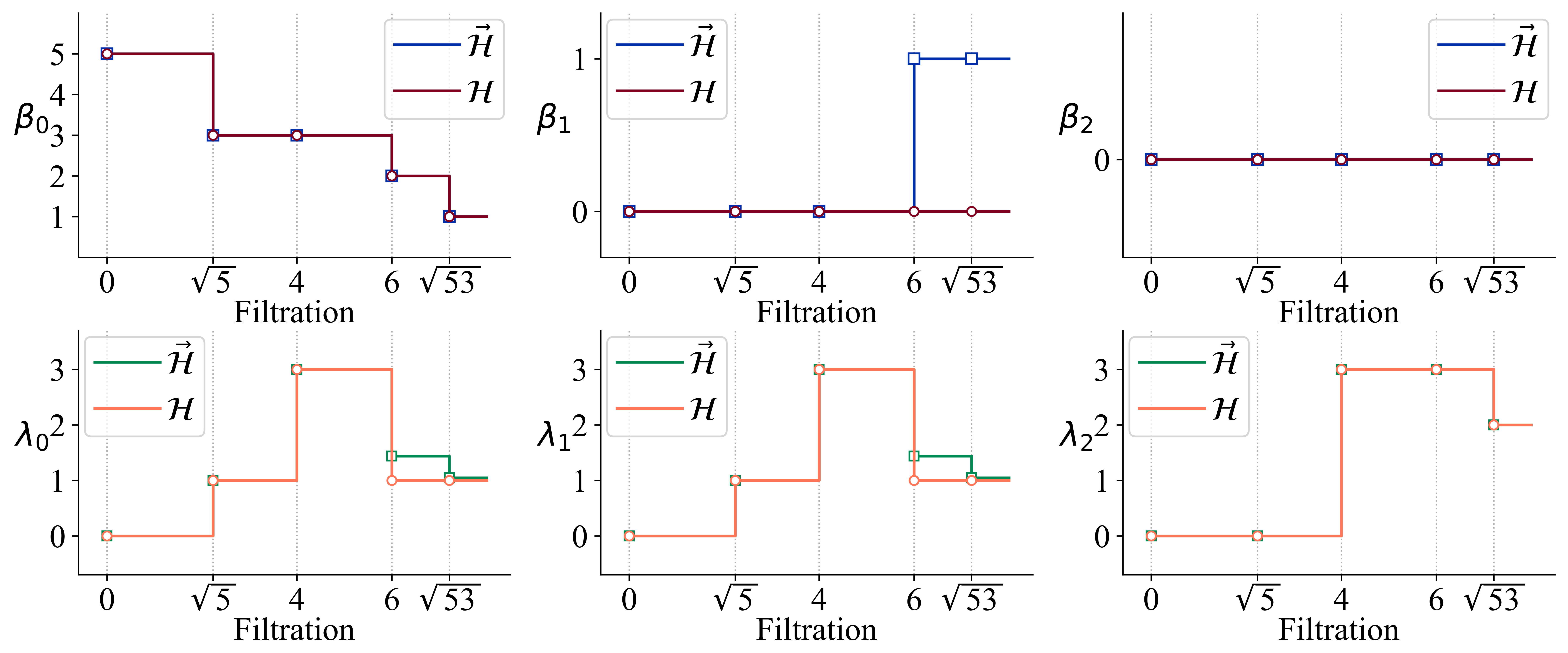}
    \caption{
      Comparison of persistent  hypergraph  Laplacians and persistent  hyperdigraph  Laplacians. The hollow square markers ($\mathcal{\vec{H}}$) and circle markers ($\mathcal{H}$) in each subfigure correspond to the five filtration stages in Figure \ref{figure:persistence_example}, i.e., $\mathcal{\vec{H}}_1$, $\mathcal{\vec{H}}_2$, ... , $\mathcal{\vec{H}}_5$. and $\mathcal{H}_1$, $\mathcal{H}_2$, ... , $\mathcal{H}_5$.
    }
    \label{figure:persistence_betti_lambda}
  \end{figure}

   Figure \ref{figure:persistence_betti_lambda} illustrates   persistent  hypergraph  Laplacians  and   persistent   hyperdigraph Laplacians for the objects given in Figure \ref{figure:persistence_example}{\bf a} and Figure \ref{figure:persistence_example}{\bf b}. For the 6-vertex system in this example, $\beta_0$ and $\beta_2$ are always the same throughout the filtration of the  persistent hypergraph homology and  persistent hyperdigraph homology, with $\beta_0$ decreasing as the filtration parameter increases and $\beta_2$ always being 0. The $\beta_1$ of the  persistent hypergraph homology is always 0 throughout, while when the filtration parameter starts at 6, the $\beta_1$ of the   persistent hyperdigraph homology changes from 0 to 6 and persists to the end, which means that there is a 1-dimensional cycle formation on the corresponding  persistent hyperdigraph homology. For the non-harmonic spectra of two types of  persistent topological Laplacians, as shown in the bottom panel of Figure \ref{figure:persistence_betti_lambda}. Except for $\lambda_2$, both $\lambda_0$ and $\lambda_1$ will produce different eigenvalues when the parameters start from 6, and the corresponding  persistent hypergraph  Laplacian  and  persistent hyperdigraph  Laplacians are $\mathcal{H}_4$ and $\mathcal{\vec{H}}_4$, respectively. For the   persistent hypergraph  Laplacian, when the filtration parameter is 6, 1-hyperedge $\{1, 2\}$ are newly generated, and for the  persistent hyperdigraph  Laplacian, directed 1-hyperedges $(12)$ and $(21)$ and directed 2-hyperedge $(012)$ are generated. It is worth noting that for the  persistent hyperdigraph  Laplacian,   $(12)$ and $(21)$ are two different directed 1-hyperedges, while the persistent hypergraph  Laplacian has only 1-hyperedge $\{1, 2\}$, which indicates that the  persistent hyperdigraph  Laplacian can distinguish vertex   directions and contain richer information.

  \begin{figure}[!ht]
    \centering
    \includegraphics[width=16cm]{./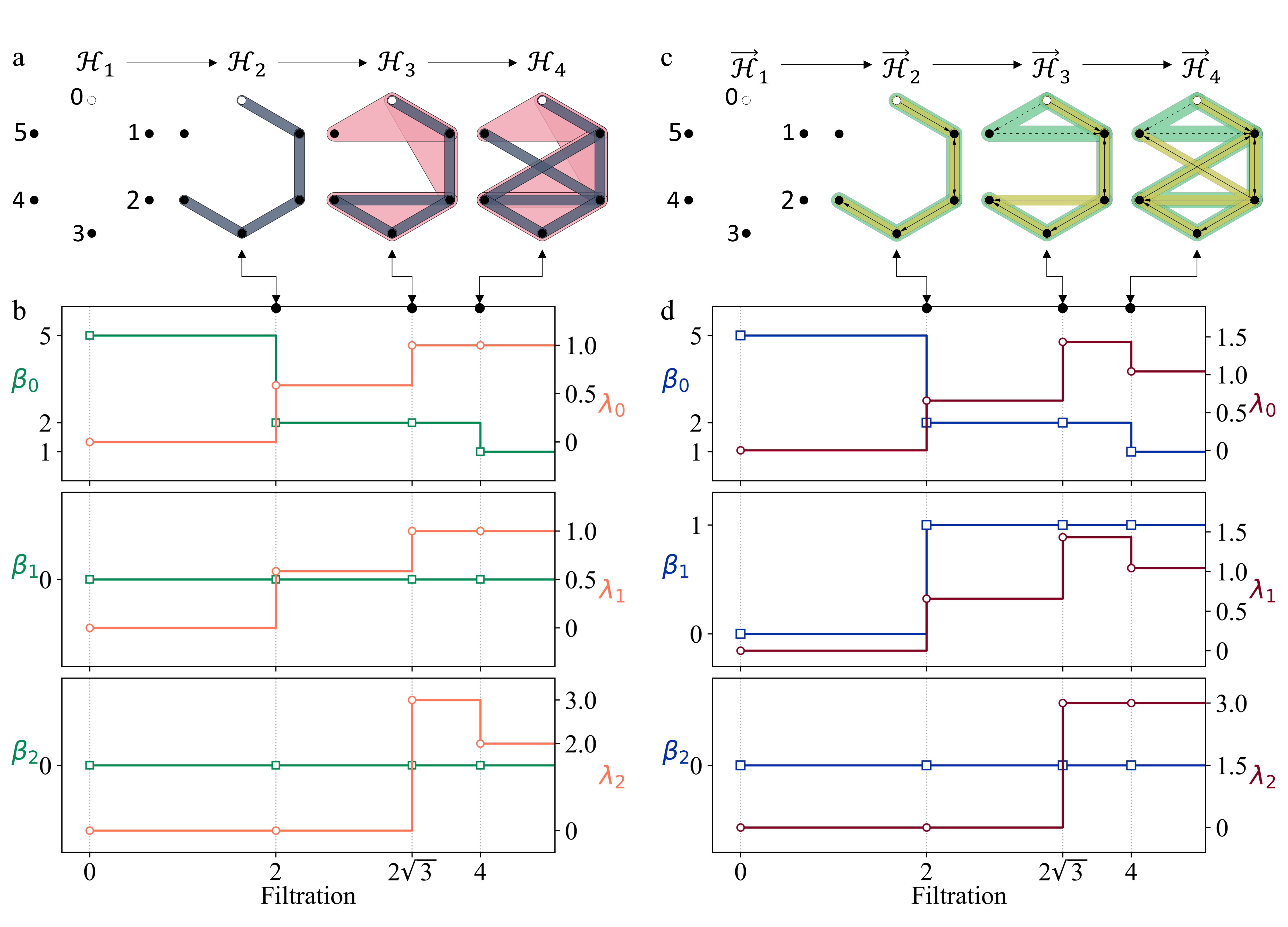}
    \caption{
      {\bf a} Persistent hypergraph Laplacians of a 6-vertex system ($\mathcal{H}$). All vertices come from the vertices of a regular hexagon with sides of length 2, i.e., $D_{01}=D_{12}=D_{23}=D_{34}=D_{45}=D_{50}=2$.
      {\bf b} Comparison of Betti numbers ($\beta_{0}$, $\beta_{1}$, and $\beta_{2}$) and the smallest eigenvalue of the non-harmonic spectra ($\lambda_{0}$, $\lambda_{1}$, and $\lambda_{2}$) of persistent hypergraph  Laplacians.
      {\bf c} Persistent hyperdigraph  Laplacians of a 6-vertex system ($\vec{\mathcal{H}}$). The distances between the corresponding vertices are the same as {\bf a}.
      {\bf d} Comparison of Betti numbers ($\beta_{0}$, $\beta_{1}$, and $\beta_{2}$) and the smallest eigenvalue of the non-harmonic spectra ($\lambda_{0}$, $\lambda_{1}$, and $\lambda_{2}$) of persistent hyperdigraph  Laplacians. The corresponding Betti numbers can not tell the changes from $\mathcal{H}_3$ to $\mathcal{H}_4$ and $\vec{\mathcal{H}}_3$ to $\vec{\mathcal{H}}_4$, while the $\lambda_{0}$, $\lambda_{1}$, and $\lambda_{2}$ can capture the difference.
    }
    \label{figure:persistence_both_compare}
  \end{figure}

To demonstrate that persistent topological Laplacians provide more information than persistent topological homologies, we present another example in Figure \ref{figure:persistence_both_compare}{\bf a} and Figure \ref{figure:persistence_both_compare}{\bf c}. We use a system of 6 vertices with positive hexagons, where the side lengths are $D_{01}=D_{12}=D_{23}=D_{34}=D_{45}=D_{50}=2$. The largest hypergraph homology ($\mathcal{H}_4$) and hyperdigraph homology ($\vec{\mathcal{H}}_4$) of the system are consistent with the ones in Figure \ref{figure:persistence_example}. The persistent hypergraph Laplacian features of dimensions 0, 1, and 2 are shown in Figure \ref{figure:persistence_both_compare}{\bf b}, where the green line shows the Betti numbers of the corresponding filtration parameter, and the orange color shows the smallest eigenvalue of the non-harmonic spectra for the corresponding filtration parameter. We observe that when the filtration parameter is $2\sqrt{2}$, the corresponding Betti numbers are unable to distinguish the changes from $\mathcal{H}_2$ to $\mathcal{H}_3$, while the $\lambda_{0}$, $\lambda_{1}$, and $\lambda_{2}$ can identify the differences. This suggests that using only the harmonic spectra, i.e., the Betti numbers, can sometimes overlook changes within the structure during the filtration process, while the non-harmonic spectra can capture the variations more sensitively. We also find a similar phenomenon for persistent hyperdigraph Laplacian features in Figure \ref{figure:persistence_both_compare}{\bf d}. When the filtration parameter increases to $2\sqrt{2}$, the directed edges $(5,1)$ and $(2,4)$ are formed. However, the corresponding Betti numbers cannot reflect the changes in connectivity, while   $\lambda_{0}$, $\lambda_{1}$, and $\lambda_{2}$ can detect the variations.

\section{Application}\label{section:application}

In this section, we demonstrate the application of persistent hyperdigraph Laplacians to analyze a protein-ligand system. Specifically, we use the protein-ligand complex structure of 1a99 from the Protein Data Bank (PDB) as a case study. As depicted in Figure \ref{figure:persistent_dihypergraph}{\bf b}, the complex includes a small ligand, C$4$H${14}$N$_2^{2+}$. To facilitate visualization of the protein-ligand binding, we limit our analysis to the C atoms from the protein located within 4 {\AA} of the ligand, as illustrated in Figure \ref{figure:persistent_dihypergraph}{\bf a}.

  \begin{figure}[!ht]
    \centering
    \includegraphics[width=16cm]{./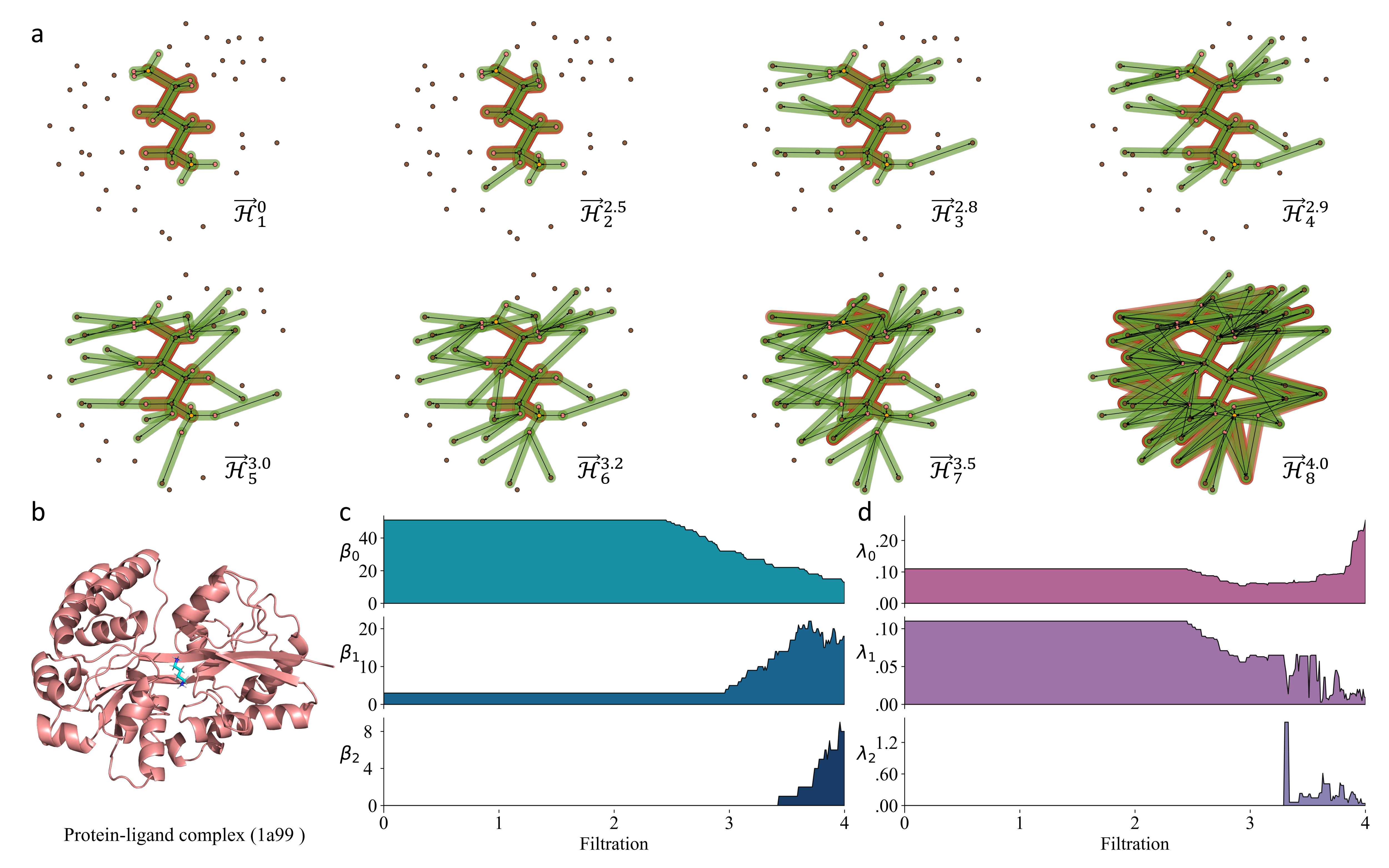}
    \caption{Illustration of the persistent hyperdigraph Laplacian analysis of a protein-ligand complex (PDB ID: 1a99).
        {\bf a} Illustration of a filtration-induced topological hyperdigraph. Only C-atoms within 4 {\AA} from the ligand are considered for the protein structure (brown dots). The green line segment indicates the directed 1-hyperedges and the orange line indicates the directed 2-hyperedges.
        {\bf b} The three dimensional structure of the protein-ligand complex.
        {\bf c} Betti numbers $\beta_n$ ($n$=0,1,2) and the smallest eigenvalues of non-harmonic spectra $\lambda_n$ ($n$=0,1,2) of persistent hyperdigraph Laplacian for the protein-ligand complex.
    }
    \label{figure:persistent_dihypergraph}
  \end{figure}

Figure \ref{figure:persistent_dihypergraph} depicts the analysis of a protein-ligand complex (PDB ID: 1a99) using persistent hyperdigraph Laplacians. For this analysis, we focus only on the interactions between the ligand and the surrounding carbon atoms from the protein, represented by directed edges. The direction of each edge is determined by the electronegativity of the atoms involved. Specifically, an atom with lower electronegativity points to an atom with higher electronegativity, while two atoms with the same electronegativity are connected by two edges with opposite directions. For the protein-ligand complex, we can determine the following atomic directions: H(2.2) $\rightarrow$ S(2.44) $\rightarrow$ C(2.5) $\rightarrow$ N(3.07) $\rightarrow$ O(3.5), where the values in parentheses denote the electronegativities of the atoms.

To better capture the interactions between the protein and ligand, we exclude the interactions within the protein. However, we include the internal covalent bonding relationships of the ligand, as shown in the first subfigure of Figure \ref{figure:persistent_dihypergraph}{\bf a}. The persistent hyperdigraph Laplacian analysis then reveals important structural changes in the complex, as demonstrated by the persistent Betti numbers and non-harmonic persistent eigenvalues shown in Figure \ref{figure:persistent_dihypergraph}{\bf c} and \ref{figure:persistent_dihypergraph}{\bf d}. These results suggest that non-harmonic persistent spectra are more informative than persistent Betti numbers or harmonic persistent spectra in capturing the complex's structural changes. Therefore, persistent hyperdigraph Laplacians provide a more powerful tool for analyzing   data, particularly when combined with machine learning and deep learning algorithms, as previously discussed in the literature \cite{chen2022persistent}.

Directed 0-hyperedges in the figure are discrete points, while  directed 1-hyperedge are given by the edges with green backgrounds and  directed 2-hyperedges are  labeled by  edges with orange backgrounds. It is worth noting that  directed 1-hyperedges represent two different vertices with  fixed orders and  directed 2-hyperedges represent three different vertices with  fixed orders. Thus, in our  persistent hyperdigraph Laplacian  theory, for a closed loop consisting of two vertices, e.g., 1$\rightarrow$2$\rightarrow$1, will be treated as a directed 1-hyperedge. This is where it differs from persistent  path Laplacian theory \cite{wang2023persistent}, in which a closed loop can generate an arbitrary high-dimensional path. Figure \ref{figure:persistent_dihypergraph}{\bf a} illustrates the filtration-induced persistent hyperdigraph $\mathcal{\vec{H}}$, i.e., $\mathcal{\vec{H}}_1^0$, $\mathcal{\vec{H}}_2^{2.5}$, $\mathcal{\vec{H}}_3^{2.8}$, $\mathcal{\vec{H}}_4^{2.9}$, $\mathcal{\vec{H}}_5^{3.0}$, $\mathcal{\vec{H}}_6^{3.2}$, $\mathcal{\vec{H}}_7^{3.5}$, and $\mathcal{\vec{H}}_8^{4.0}$, where the superscripts are the corresponding filtration parameters.

Figures \ref{figure:persistent_dihypergraph}{\bf c} and \ref{figure:persistent_dihypergraph}{\bf d} respectively show the persistent Betti numbers and non-harmonic persistent eigenvalues obtained from persistent hyperdigraph Laplacians. In this analysis, the smallest non-zero eigenvalues of Laplacian matrices were selected to represent the non-harmonic spectral information. The results demonstrate that persistent Betti numbers and non-harmonic persistent eigenvalues exhibit completely different features. For example, at a filtration parameter of about 2.5 (corresponding to the first and second diagrams in Figure \ref{figure:persistent_dihypergraph}{\bf a}), the ligand connects to the C atoms in the protein, forming new directed 1-hyperedges. At this point, the number of directed 0-hyperedges of the complex decreases, and persistent Betti numbers $\beta_0$ decrease. However, the higher dimensional Betti numbers $\beta_1$ and $\beta_2$ remain unchanged. In contrast, $\lambda_0$ and $\lambda_1$ of the non-harmonic persistent spectra decrease significantly, indicating the formation of large connected complexes. Moreover, the range of $\beta_2$ with changes is smaller than that of $\lambda_2$.

These results suggest that non-harmonic persistent spectra are more informative than persistent Betti numbers or harmonic persistent spectra. Therefore, persistent hyperdigraph Laplacians can better capture structural changes than persistent hyperdigraph homology.


\section{Conclusion}
Since hypergraphs are a generalization of graphs, it is natural to consider hyperdigraphs as a generalization of directed graphs (digraphs). However, a problem arises when trying to embed topological structures into hyperdigraphs.

In this work, we introduce the concept of hyperdigraph homology, or topological hyperdigraphs, to embed topological information into hyperdigraphs. The intrinsic vertex order of directed hyperedges in topological hyperdigraphs allows for the distinction of different sets of vertices, making it possible to distinguish all possible permutations of a given vertex set. As a result, topological hyperdigraphs can be viewed as a generalization of topological hypergraphs. We also develop a method to reduce a hyperdigraph homology to a hypergraph homology.

We introduce a new set of Laplacian methods called topological hyperdigraph Laplacians, which serve as a generalization of hyperdigraph homology. These Laplacians provide both harmonic spectra (related to topological invariants or Betti numbers) and non-harmonic spectra.

Furthermore, we propose persistent hyperdigraph homology and persistent hyperdigraph Laplacians through filtration. The persistent hyperdigraph Laplacians not only return the persistent topological invariants of persistent hyperdigraph homology but also provide non-harmonic persistent spectra to capture the homotopic shape evolution of the data at various scales. We demonstrate the usefulness of these proposed topological methods through numerous examples.

Finally, we explore the application of persistent hyperdigraph Laplacians by characterizing the interactions between a protein and a ligand. We believe that these proposed methods, including hyperdigraph homology, topological hyperdigraph Laplacians, persistent hyperdigraph homology, and persistent hyperdigraph Laplacians, provide a powerful set of new tools for topological data analysis (TDA). We anticipate that combining these methods with machine learning and deep learning algorithms will have a significant impact on data science.


\section{Acknowledgments}
This work was supported in part by NIH grants  R01GM126189 and  R01AI164266, NSF grants DMS-2052983,  DMS-1761320, and IIS-1900473,  NASA grant 80NSSC21M0023,  MSU Foundation,  Bristol-Myers Squibb 65109, and Pfizer.
  The work of   Liu  and Wu was supported by Natural Science Foundation of China (NSFC grant no. 11971144), High-level Scientic Research Foundation of Hebei Province, the start-up research fund from BIMSA.


\begin{thebibliography}{52}
\providecommand{\natexlab}[1]{#1}
\providecommand{\url}[1]{\texttt{#1}}
\expandafter\ifx\csname urlstyle\endcsname\relax
  \providecommand{\doi}[1]{doi: #1}\else
  \providecommand{\doi}{doi: \begingroup \urlstyle{rm}\Url}\fi

\bibitem[Kaczynski et~al.(2004)Kaczynski, Mischaikow, and
  Mrozek]{kaczynski2004computational}
Tomasz Kaczynski, Konstantin~Michael Mischaikow, and Marian Mrozek.
\newblock \emph{Computational Homology}, volume~3.
\newblock Springer, 2004.

\bibitem[Townsend et~al.(2020)Townsend, Micucci, Hymel, Maroulas, and
  Vogiatzis]{townsend2020representation}
Jacob Townsend, Cassie~Putman Micucci, John~H Hymel, Vasileios Maroulas, and
  Konstantinos~D Vogiatzis.
\newblock Representation of molecular structures with persistent homology for
  machine learning applications in chemistry.
\newblock \emph{Nature Communications}, 11\penalty0 (1):\penalty0 3230, 2020.

\bibitem[Cang and Wei(2017)]{cang2017topologynet}
Zixuan Cang and Guo-Wei Wei.
\newblock TopologyNet: Topology based deep convolutional and multi-task neural
  networks for biomolecular property predictions.
\newblock \emph{PLoS Computational Biology}, 13\penalty0 (7):\penalty0
  e1005690, 2017.

\bibitem[Zomorodian and Carlsson(2004)]{zomorodian2004computing}
Afra Zomorodian and Gunnar Carlsson.
\newblock Computing persistent homology.
\newblock In \emph{Proceedings of the Twentieth Annual Symposium on
  Computational Geometry}, pages 347--356, 2004.

\bibitem[Edelsbrunner et~al.(2008)Edelsbrunner, Harer,
  et~al.]{edelsbrunner2008persistent}
Herbert Edelsbrunner, John Harer, et~al.
\newblock Persistent homology - a survey.
\newblock \emph{Contemporary Mathematics}, 453\penalty0 (26):\penalty0
  257--282, 2008.

\bibitem[Bubenik and D{\l}otko(2017)]{bubenik2017persistence}
Peter Bubenik and Pawe{\l} D{\l}otko.
\newblock A persistence landscapes toolbox for topological statistics.
\newblock \emph{Journal of Symbolic Computation}, 78:\penalty0 91--114, 2017.

\bibitem[Carlsson(2009)]{carlsson2009topology}
Gunnar Carlsson.
\newblock Topology and data.
\newblock \emph{Bulletin of the American Mathematical Society}, 46\penalty0
  (2):\penalty0 255--308, 2009.

\bibitem[Cang and Wei(2020)]{cang2020persistent}
Zixuan Cang and Guo-Wei Wei.
\newblock Persistent cohomology for data with multicomponent heterogeneous
  information.
\newblock \emph{SIAM Journal on Mathematics of Data Science}, 2\penalty0
  (2):\penalty0 396--418, 2020.

\bibitem[Kirchhoff(1847)]{kirchhoff1847ueber}
Gustav Kirchhoff.
\newblock Ueber die aufl{\"o}sung der gleichungen, auf welche man bei der
  untersuchung der linearen vertheilung galvanischer str{\"o}me gef{\"u}hrt
  wird.
\newblock \emph{Annalen der Physik}, 148\penalty0 (12):\penalty0 497--508,
  1847.

\bibitem[Fiedler(1973)]{fiedler1973algebraic}
Miroslav Fiedler.
\newblock Algebraic connectivity of graphs.
\newblock \emph{Czechoslovak Mathematical Journal}, 23\penalty0 (2):\penalty0
  298--305, 1973.

\bibitem[Hirani(2003)]{hirani2003discrete}
Anil~Nirmal Hirani.
\newblock \emph{Discrete exterior calculus}.
\newblock California Institute of Technology, 2003.

\bibitem[Desbrun et~al.(2005)Desbrun, Hirani, Leok, and
  Marsden]{desbrun2005discrete}
Mathieu Desbrun, Anil~N Hirani, Melvin Leok, and Jerrold~E Marsden.
\newblock Discrete exterior calculus.
\newblock \emph{arXiv preprint math/0508341}, 2005.

\bibitem[Arnold et~al.(2006)Arnold, Falk, and Winther]{arnold2006finite}
Douglas~N Arnold, Richard~S Falk, and Ragnar Winther.
\newblock Finite element exterior calculus, homological techniques, and
  applications.
\newblock \emph{Acta Numerica}, 15:\penalty0 1--155, 2006.

\bibitem[Wei(2023)]{wei2023topological}
Guo-Wei Wei.
\newblock Topological data analysis hearing the shapes of drums and bells.
\newblock \emph{arXiv preprint arXiv:2301.05025}, 2023.

\bibitem[Hansen and Ghrist(2019)]{hansen2019toward}
Jakob Hansen and Robert Ghrist.
\newblock Toward a spectral theory of cellular sheaves.
\newblock \emph{Journal of Applied and Computational Topology}, 3:\penalty0
  315--358, 2019.

\bibitem[Gomes and Miranda(2019)]{gomes2019path}
Andr{\'e} Gomes and Daniel Miranda.
\newblock Path cohomology of locally finite digraphs, Hodge's theorem and the $
  p $-lazy random walk.
\newblock \emph{arXiv preprint arXiv:1906.04781}, 2019.

\bibitem[Grigor'yan et~al.(2012)Grigor'yan, Lin, Muranov, and
  Yau]{grigor2012homologies}
Alexander Grigor'yan, Yong Lin, Yuri Muranov, and Shing-Tung Yau.
\newblock Homologies of path complexes and digraphs.
\newblock \emph{arXiv preprint arXiv:1207.2834}, 2012.

\bibitem[Grigor'yan et~al.(2020)Grigor'yan, Lin, Muranov, and
  Yau]{grigor2020path}
Alexander Grigor'yan, Yong Lin, Yu~Yuri Muranov, and Shing-Tung Yau.
\newblock Path complexes and their homologies.
\newblock \emph{Journal of Mathematical Sciences}, 248\penalty0 (5):\penalty0
  564--599, 2020.

\bibitem[Chen et~al.(2023)Chen, Liu, Wu, Wei, Pan, and Yau]{chen2023path}
Dong Chen, Jian Liu, Jie Wu, Guo-Wei Wei, Feng Pan, and Shing-Tung Yau.
\newblock Path topology in molecular and materials sciences.
\newblock \emph{The Journal of Physical Chemistry Letters}, 14:\penalty0
  954--964, 2023.

\bibitem[Horak and Jost(2013)]{horak2013spectra}
Danijela Horak and J{\"u}rgen Jost.
\newblock Spectra of combinatorial Laplace operators on simplicial complexes.
\newblock \emph{Advances in Mathematics}, 244:\penalty0 303--336, 2013.

\bibitem[Chen et~al.(2019)Chen, Zhao, Tong, and Wei]{chen2019evolutionary}
Jiahui Chen, Rundong Zhao, Yiying Tong, and Guo-Wei Wei.
\newblock Evolutionary de Rham-Hodge method.
\newblock \emph{arXiv preprint arXiv:1912.12388}, 2019.

\bibitem[Wang et~al.(2019)Wang, Nguyen, and Wei]{wang2019persistent}
Rui Wang, Duc~Duy Nguyen, and Guo-Wei Wei.
\newblock Persistent spectral graph.
\newblock \emph{arXiv preprint arXiv:1912.04135}, 2019.

\bibitem[M{\'e}moli et~al.(2022)M{\'e}moli, Wan, and
  Wang]{memoli2022persistent}
Facundo M{\'e}moli, Zhengchao Wan, and Yusu Wang.
\newblock Persistent Laplacians: Properties, algorithms and implications.
\newblock \emph{SIAM Journal on Mathematics of Data Science}, 4\penalty0
  (2):\penalty0 858--884, 2022.


\bibitem[Liu et~al.(2023)Liu, Li, and Wu]{liu2023algebraic}
Jian Liu, Jingyan Li, and Jie Wu.
\newblock The algebraic stability for persistent {L}aplacians.
\newblock \emph{arXiv preprint arXiv:2302.03902}, 2023.


\bibitem[Meng and Xia(2021)]{meng2021persistent}
Zhenyu Meng and Kelin Xia.
\newblock Persistent spectral--based machine learning (Perspect ML) for
  protein-ligand binding affinity prediction.
\newblock \emph{Science Advances}, 7\penalty0 (19):\penalty0 eabc5329, 2021.

\bibitem[Chen et~al.(2022)Chen, Qiu, Wang, and Wei]{chen2022persistent}
Jiahui Chen, Yuchi Qiu, Rui Wang, and Guo-Wei Wei.
\newblock Persistent Laplacian projected Omicron BA. 4 and BA. 5 to become new
  dominating variants.
\newblock \emph{Computers in Biology and Medicine}, 151:\penalty0 106262, 2022.

\bibitem[Wee and Xia(2022)]{wee2022persistent}
JunJie Wee and Kelin Xia.
\newblock Persistent spectral based ensemble learning (Perspect-EL) for
  protein--protein binding affinity prediction.
\newblock \emph{Briefings in Bioinformatics}, 23\penalty0 (2), 2022.

\bibitem[Qiu and Wei(2023)]{qiu2023persistent}
Yuchi Qiu and Guo-Wei Wei.
\newblock Persistent spectral theory-guided protein engineering.
\newblock \emph{Nature Computational Science}, pages 1--15, 2023.

\bibitem[Wang et~al.(2021)Wang, Zhao, Ribando-Gros, Chen, Tong, and
  Wei]{wang2021hermes}
Rui Wang, Rundong Zhao, Emily Ribando-Gros, Jiahui Chen, Yiying Tong, and
  Guo-Wei Wei.
\newblock HERME: Persistent spectral graph software.
\newblock \emph{Foundations of Data Science (Springfield, Mo.)}, 3\penalty0
  (1):\penalty0 67, 2021.

\bibitem[Chen et~al.(2021)Chen, Zhao, Tong, and Wei]{chen2021evolutionary}
Jiahui Chen, Rundong Zhao, Yiying Tong, and Guo-Wei Wei.
\newblock Evolutionary de {R}ham-{H}odge method.
\newblock \emph{Discrete and Continuous Dynamical Systems. Series B},
  26\penalty0 (7):\penalty0 3785, 2021.

\bibitem[Wei and Wei(2021)]{wei2021persistent}
Xiaoqi Wei and Guo-Wei Wei.
\newblock Persistent sheaf Laplacians.
\newblock \emph{arXiv preprint arXiv:2112.10906}, 2021.

\bibitem[Wang and Wei(2023)]{wang2023persistent}
Rui Wang and Guo-Wei Wei.
\newblock Persistent path Laplacian.
\newblock \emph{Foundations of Data Science}, 5\penalty0 (1):\penalty0 26--55,
  2023.

\bibitem[Ameneyro et~al.(2022)Ameneyro, Maroulas, and
  Siopsis]{ameneyro2022quantum}
Bernardo Ameneyro, Vasileios Maroulas, and George Siopsis.
\newblock Quantum persistent homology.
\newblock \emph{arXiv preprint arXiv:2202.12965}, 2022.

\bibitem[Qu et~al.(2013)Qu, Wang, Li, and Bao]{qu2013encoding}
Ri~Qu, Juan Wang, Zong-shang Li, and Yan-ru Bao.
\newblock Encoding hypergraphs into quantum states.
\newblock \emph{Physical Review A}, 87\penalty0 (2):\penalty0 022311, 2013.

\bibitem[Eiter and Gottlob(2002)]{eiter2002hypergraph}
Thomas Eiter and Georg Gottlob.
\newblock Hypergraph transversal computation and related problems in logic and
  AI.
\newblock In \emph{Logics in Artificial Intelligence: 8th European Conference,
  JELIA 2002 Cosenza, Italy, September 23--26, 2002 Proceedings 8}, pages
  549--564. Springer, 2002.

\bibitem[Akhremtsev et~al.(2017)Akhremtsev, Heuer, Sanders, and
  Schlag]{akhremtsev2017engineering}
Yaroslav Akhremtsev, Tobias Heuer, Peter Sanders, and Sebastian Schlag.
\newblock Engineering a direct $k$-way hypergraph partitioning algorithm.
\newblock In \emph{2017 Proceedings of the Ninteenth Workshop on Algorithm
  Engineering and Experiments (ALENEX)}, pages 28--42. SIAM, 2017.

\bibitem[Ren et~al.(2018{\natexlab{a}})Ren, Wu, and Wu]{ren2018hodge}
Shiquan Ren, Chengyuan Wu, and Jie Wu.
\newblock Hodge decompositions for weighted hypergraphs.
\newblock \emph{arXiv preprint arXiv:1805.11331}, 2018{\natexlab{a}}.

\bibitem[Bressan et~al.(2019)Bressan, Li, Ren, and Wu]{bressan2019embedded}
Stephane Bressan, Jingyan Li, Shiquan Ren, and Jie Wu.
\newblock The embedded homology of hypergraphs and applications.
\newblock \emph{Asian Journal of Mathematics}, 23\penalty0 (3):\penalty0
  479--500, 2019.

\bibitem[Ren et~al.(2018{\natexlab{b}})Ren, Wang, Wu, and Wu]{ren2018discrete}
Shiquan Ren, Chong Wang, Chengyuan Wu, and Jie Wu.
\newblock A discrete Morse theory for hypergraphs.
\newblock \emph{arXiv preprint arXiv:1804.07132}, 2018{\natexlab{b}}.

\bibitem[Grbic et~al.(2022)Grbic, Wu, Xia, and Wei]{grbic2022aspects}
Jelena Grbic, Jie Wu, Kelin Xia, and Guowei Wei.
\newblock Aspects of topological approaches for data science.
\newblock \emph{Foundations of Data Science}, 2022.

\bibitem[Liu et~al.(2021)Liu, Feng, Wu, and Xia]{liu2021persistent}
Xiang Liu, Huitao Feng, Jie Wu, and Kelin Xia.
\newblock Persistent spectral hypergraph based machine learning (PSH-ML) for
  protein-ligand binding affinity prediction.
\newblock \emph{Briefings in Bioinformatics}, 22\penalty0 (5):\penalty0
  bbab127, 2021.

\bibitem[Gallo et~al.(1993)Gallo, Longo, Pallottino, and
  Nguyen]{gallo1993directed}
Giorgio Gallo, Giustino Longo, Stefano Pallottino, and Sang Nguyen.
\newblock Directed hypergraphs and applications.
\newblock \emph{Discrete Applied Mathematics}, 42\penalty0 (2-3):\penalty0
  177--201, 1993.

\bibitem[Berge and Minieka(1973)]{berge1973graphs}
C.~Berge and E.~Minieka.
\newblock \emph{Graphs and Hypergraphs}.
\newblock Graphs and Hypergraphs. North-Holland Publishing Company, 1973.
\newblock ISBN 9780444103994.
\newblock URL \url{https://books.google.com.sg/books?id=X32GlVfqXjsC}.

\bibitem[D{\"o}rfler and Waller(1980)]{dorfler1980category}
W~D{\"o}rfler and DA~Waller.
\newblock A category-theoretical approach to hypergraphs.
\newblock \emph{Archiv der Mathematik}, 34\penalty0 (1):\penalty0 185--192,
  1980.

\bibitem[Ausiello and Laura(2017)]{ausiello2017directed}
Giorgio Ausiello and Luigi Laura.
\newblock Directed hypergraphs: Introduction and fundamental algorithms: a
  survey.
\newblock \emph{Theoretical Computer Science}, 658:\penalty0 293--306, 2017.

\bibitem[Gallo and Scutella(1998)]{gallo1998directed}
Giorgio Gallo and Maria~Grazia Scutella.
\newblock Directed hypergraphs as a modelling paradigm.
\newblock \emph{Rivista di matematica per le scienze economiche e sociali},
  21\penalty0 (1):\penalty0 97--123, 1998.

\bibitem[Ramaswamy et~al.(1997)Ramaswamy, Sarkar, and Chen]{ramaswamy1997using}
Mysore Ramaswamy, Sumit Sarkar, and Ye-Sho Chen.
\newblock Using directed hypergraphs to verify rule-based expert systems.
\newblock \emph{IEEE Transactions on Knowledge and Data Engineering},
  9\penalty0 (2):\penalty0 221--237, 1997.

\bibitem[Muranov et~al.(2021)Muranov, Szczepkowska, and
  Vershinin]{muranov2021path}
Yuri Muranov, Anna Szczepkowska, and Vladimir Vershinin.
\newblock Path homology of directed hypergraphs.
\newblock \emph{arXiv preprint arXiv:2109.09842}, 2021.

\bibitem[Thakur and Tripathi(2009)]{thakur2009linear}
Mayur Thakur and Rahul Tripathi.
\newblock Linear connectivity problems in directed hypergraphs.
\newblock \emph{Theoretical Computer Science}, 410\penalty0 (27-29):\penalty0
  2592--2618, 2009.

\bibitem[Berge(1984)]{berge1984hypergraphs}
Claude Berge.
\newblock \emph{Hypergraphs: combinatorics of finite sets}, volume~45.
\newblock Elsevier, 1984.

\bibitem[Grigor'yan et~al.(2017)Grigor'yan, Muranov, and
  Yau]{grigor2017homologies}
Alexander Grigor'yan, Yuri Muranov, and Shing-Tung Yau.
\newblock Homologies of digraphs and {K\"u}nneth formulas.
\newblock \emph{Communications in Analysis and Geometry}, 25\penalty0
  (5):\penalty0 969--1018, 2017.

\bibitem[Harary and Palmer(2014)]{harary2014graphical}
Frank Harary and Edgar~M Palmer.
\newblock \emph{Graphical enumeration}.
\newblock Elsevier, 2014.


\end{thebibliography}

\end{document}